\documentclass[11pt]{amsart}
\usepackage{amsmath,amssymb,amsthm,mathtools}
\usepackage[margin=25mm]{geometry}
\usepackage{enumitem}
\usepackage{xcolor}
\usepackage[hypertexnames=false,colorlinks=true,linkcolor=blue!50!black,citecolor=blue!50!black,urlcolor=blue!50!black]{hyperref}

\theoremstyle{plain}
\newtheorem{mainthm}{Theorem}

\newtheorem{thm}{Theorem}[section]
\newtheorem{prop}[thm]{Proposition}
\newtheorem{lem}[thm]{Lemma}
\newtheorem{cor}[thm]{Corollary}
\theoremstyle{definition}
\newtheorem{dfn}[thm]{Definition}
\newtheorem{ex}[thm]{Example}

\theoremstyle{remark}
\newtheorem{rem}[thm]{Remark}
\newtheorem{nota}[thm]{Notation}

\numberwithin{equation}{section}

\newcommand{\RR}{\mathbb{R}}
\newcommand{\XX}{\mathfrak{X}}
\newcommand{\Xmap}{\mathcal{X}}
\newcommand{\Elast}{\mathsf{Elast}}
\newcommand{\Elastc}{\mathsf{Elast}_{\mathrm{curve}}}
\newcommand{\Rpt}{\mathbf{R}}      
\newcommand{\Rfour}{\mathcal{R}}   
\newcommand{\Qop}{\mathcal{Q}}     
\DeclareMathOperator{\id}{id}
\DeclareMathOperator{\Path}{Path}
\DeclareMathOperator{\ad}{ad}
\DeclareMathOperator{\sect}{sec}
\DeclareMathOperator{\Ric}{Ric}
\DeclareMathOperator{\tr}{tr}
\DeclareMathOperator{\supp}{supp}

\title[Curvature and second variation in elastic diffeology]{Curvature, second variation and Jacobi fields\\ in elastic diffeology}
\author[Y. Shiobara]{Yusuke Shiobara}
\address{%
  Department of Science and Technology,
  Graduate School of Medicine, Science and Technology,
  Shinshu University,
  Matsumoto, Nagano 390-8621, Japan}
\email{24hs603d@shinshu-u.ac.jp}
\subjclass[2020]{Primary 58A40; Secondary 18F40, 53B20, 58B20, 58E10}
\keywords{diffeology, tangent category, curvature, second variation, Jacobi field, index form}

\begin{document}

\begin{abstract}
We develop curvature theory on Blohmann's elastic diffeological spaces. We introduce curvature-tensorial connections, whose curvature factors uniquely through $T_3M$, and show that this property holds on finite-dimensional manifolds and is preserved on mapping spaces. Using a plotwise commutator identity and the elasticity axioms, we establish the classical curvature symmetries for curvature-tensorial Levi-Civita connections. We define sectional curvature and a curvature form on the algebraic exterior square, and demonstrate an obstruction to the usual Ricci trace construction: on $C^\infty(S^1,S^2)$, the relevant curvature endomorphism extends boundedly to the $L^2$ completion but is not trace class. We express the second variation of the energy by the index form and characterize the null space of the index form by Jacobi fields. Given a geodesic flow, we construct Jacobi fields with prescribed initial data along its flow lines; they are unique when the real line is a curve object for the space. Finally, non-positive sectional curvature excludes conjugate points, while a negative index direction obstructs length minimization; the latter result does not require the curve-object property and applies to mapping spaces such as free loop spaces of complete Riemannian manifolds.
\end{abstract}

\maketitle
\tableofcontents

\section{Introduction}\label{sec:intro}

Diffeology was introduced by Souriau~\cite{Sou}, and its systematic development is due to Iglesias-Zemmour~\cite{PIZ13}. It generalizes smooth manifolds: the category of diffeological spaces contains the manifolds as a full subcategory and is complete, cocomplete and cartesian closed, so that singular quotients and many infinite-dimensional spaces are treated on the same footing.
Differential forms, de~Rham theory and homotopy theory have been developed extensively in this setting \cite{PIZ13,Kihara}, but Riemannian geometry is less developed, partly because the various notions of tangent space on diffeological spaces do not agree in general~\cite{Taho}.
Riemannian metrics were introduced by Iglesias-Zemmour~\cite{PIZ25} without reference to tangent spaces and interpreted through Blohmann's tangent functor in~\cite{KSS}; Goldammer and Welker~\cite{GW} studied metrics and Levi-Civita connections on Vincent's tangent spaces~\cite{Vin}.
For connections, geodesics and variational theory it is convenient to work in a tangent category in the sense of Rosick\'y~\cite{Ros} and Cockett--Cruttwell~\cite{CC14}, where connections and their curvature are available~\cite{CC17,LW}.
Blohmann's tangent functor $T$, the left Kan extension of the tangent functor of Euclidean domains, makes the category $\Elast$ of elastic diffeological spaces such a tangent category~\cite{Bloh23}.

In \cite{Shio} we used this structure to develop Riemannian geometry on elastic spaces. We introduced connections following \cite{CC17,LW}, Riemannian metrics in the sense of \cite{KSS}, Levi-Civita connections, geodesics and the geodesic flow, and proved a variational characterization of geodesics: if the geodesic flow fixes the zero section, the critical paths of the energy functional are exactly the paths satisfying the geodesic equation on $[0,1]$ \cite[Theorem~A]{Shio}. The curvature morphism of \cite{CC17} was recalled in \cite[\S5]{Shio}, but its interaction with a Riemannian metric was not treated there. The present paper takes up this question and develops the second-order variational theory.

The classical proofs of the results below are local: they use bump functions and local frames, charts, the exponential map, and the local existence and uniqueness theory of ordinary differential equations. None of these is available on a general elastic space. Smooth functions may be scarce, a tangent vector need not be the value of a global vector field, there are no charts, and since $\Elast$ contains all convenient vector spaces, $\RR$ is not a curve object in $\Elast$ \cite[\S6.1]{Shio}. The aim of this paper is to show that the second-order theory nevertheless goes through, and to determine which structure each step actually requires. Our main contributions are the following.
\begin{enumerate}[label=\textup{(\roman*)}]
\item A pointwise curvature tensor and the four classical curvature symmetries at every point, obtained without localization (Theorem~A below).
\item The second variation formula, proved with plots of the path space in place of charts or an exponential map; its key input is a commutation formula for covariant derivatives along plots, which has no bracket term (Theorem~B).
\item Jacobi fields, realized as solutions of the linearized geodesic equation on $T^2M$. Existence requires only a geodesic flow, while uniqueness requires $\RR$ to be a curve object for $T^2M$; this explains why the definition of $\Elastc$ in \cite{Shio} imposes the curve-object condition on every iterated tangent bundle (Theorem~C).
\item Two phenomena without finite-dimensional counterpart: on $C^\infty(S^1,S^2)$ the Ricci curvature cannot be defined as a trace, in line with Freed's observations for loop groups \cite{Fre88} (Example~\ref{ex:noric}), and the non-minimality theorem applies to free loop spaces of complete Riemannian manifolds, although these spaces are not known to belong to $\Elastc$ (Theorem~D(3)).
\end{enumerate}

For~(i), the curvature of a vertical connection $K$ is a morphism $C_K\colon T^3M\to TM$, and the formula $R^K(X,Y,Z)=\nabla_X\nabla_YZ-\nabla_Y\nabla_XZ-\nabla_{[X,Y]}Z$ of \cite[Proposition~5.4]{Shio} evaluates it on global vector fields only. Since localization is not available, identities between global vector fields need not say anything about individual tangent vectors. We deal with this in three steps. First, we isolate the \emph{curvature-tensorial} connections, for which $C_K$ factors through a trilinear morphism $\Rpt^K\colon T_3M\to TM$ (Section~\ref{sec:tensorial}). The projection $\Pi\colon T^3M\to T_3M$ through which $C_K$ factors is a subduction for every elastic space (Proposition~\ref{prop:Pi-subd}), so $\Rpt^K$ is unique and automatically antisymmetric in its first two arguments. Second, we replace global vector fields by plots: the commutator of two covariant derivatives along a plot is given by the curvature morphism (Theorem~\ref{thm:commute}), with no bracket term. Third, Blohmann's elasticity axiom (E1) implies that any finite family of tangent vectors at a point consists of coordinate vectors of a single plot, together with fields along that plot taking any finitely many prescribed values at the base point (Lemma~\ref{lem:joint}). Every identity that can be tested on coordinate fields of plots and on fields along plots therefore holds at every point and for all tangent vectors.

\begin{mainthm}[Theorem~\ref{thm:S}]
Let $(M,g)$ be an elastic Riemannian diffeological space and $K$ a curvature-tensorial Levi-Civita connection for $g$. Then for every $x\in M$ the quadrilinear form $\Rfour_x(v,w,z,y)=g_x(\Rpt_x(v,w,z),y)$ satisfies the four classical symmetries of the Riemann tensor on $T_xM$.
\end{mainthm}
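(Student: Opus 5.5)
The plan is to reduce each of the four symmetries to an identity between covariant derivatives of fields along a single plot. Lemma~\ref{lem:joint} then transfers that identity to arbitrary tangent vectors at $x$. The four symmetries are:
\begin{itemize}
\item[(a)] antisymmetry in the first two slots;
\item[(b)] antisymmetry in the last two slots;
\item[(c)] the first Bianchi identity;
\item[(d)] pair symmetry.
\end{itemize}
Symmetry (a) is already built in. Since $\Pi\colon T^3M\to T_3M$ is a subduction (Proposition~\ref{prop:Pi-subd}) and $C_K$ is antisymmetric under the flip of the first two tangent directions, $\Rpt^K$ is antisymmetric in $v,w$. So $\Rfour_x(v,w,z,y)=-\Rfour_x(w,v,z,y)$ needs no further work.

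For the remaining identities, fix $v,w,z,y\in T_xM$. By Lemma~\ref{lem:joint} choose a plot $P\colon U\to M$, $U\subset\RR^n$ open, with $P(0)=x$, such that $v=\partial_1P(0)$ and $w=\partial_2P(0)$. Choose also fields $Z,Y$ along $P$ with $Z(0)=z$ and $Y(0)=y$. For the Bianchi identity we additionally take $z=\partial_3P(0)$, so that all three vectors are coordinate vectors. Write $D_i$ for the covariant derivative along $\partial_i$ induced by $K$. Theorem~\ref{thm:commute} gives
\[
D_iD_jW-D_jD_iW=\Rpt^K(\partial_iP,\partial_jP,W)
\]
for any field $W$ along $P$, with no bracket term.

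\emph{Skew symmetry in the last pair.} Metric compatibility of the Levi-Civita connection, applied along the plot, gives $\partial_i g(Z,Y)=g(D_iZ,Y)+g(Z,D_iY)$. Differentiating twice and antisymmetrizing in $i=1$, $j=2$, the left-hand side vanishes because partial derivatives of the smooth function $g(Z,Y)\circ$ on $U$ commute. The mixed terms cancel, leaving $g(\Rpt(\partial_1P,\partial_2P,Z),Y)+g(Z,\Rpt(\partial_1P,\partial_2P,Y))=0$. Evaluating at $0$ gives (b).

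\emph{First Bianchi identity.} Torsion-freeness of $K$ yields $D_i\partial_jP=D_j\partial_iP$ along plots. Summing the commutator identity over cyclic permutations of $(1,2,3)$ applied to $W=\partial_kP$, every term cancels pairwise by torsion-freeness. This gives $\sum_{\mathrm{cyc}}\Rpt(\partial_1P,\partial_2P,\partial_3P)=0$, and pairing with $Y(0)=y$ yields (c) at $x$.

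\emph{Pair symmetry.} Symmetry (d) follows from (a)--(c) by the standard purely algebraic argument: add the four cyclic Bianchi identities and use the two antisymmetries. This step is pointwise linear algebra on $T_xM$ and needs no localization.

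The main obstacle I expect is the justification of the two plotwise inputs, namely metric compatibility in the differentiated form $\partial_i g(Z,Y)=\dots$ and torsion-freeness as $D_i\partial_jP=D_j\partial_iP$. In \cite{Shio} these properties are phrased as identities of morphisms $T^2M\to TM$ or of bundle maps, so one must check that composing with $TP$ and with the canonical flip on $T^2U$ produces exactly these formulas. I would first verify this for the Euclidean domain $U$, where $T$ is the ordinary tangent functor, and then use naturality of $K$, of the flip $c$, and of $g$ with respect to the plot $P$. A further concern is whether $\partial_i$ applied to $g(Z,Y)$ genuinely commutes, that is, whether $g(Z,Y)$ is smooth on $U$. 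This should follow because $g$ is a smooth fiberwise bilinear map on $TM\times_MTM$ and $(Z,Y)$ is a plot of that space.
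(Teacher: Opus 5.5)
Your proposal is correct and follows the paper's proof essentially step for step. The paper also gets first-pair antisymmetry from surjectivity of $\Pi$ and $C_K\circ\tau_{TM}=-C_K$, realizes the vectors jointly on a plot (Lemma~\ref{lem:joint}) for last-pair antisymmetry and the Bianchi identity, applies the bracket-free commutator formula (Theorem~\ref{thm:commute}) with plotwise metric compatibility and torsion-freeness (Corollary~\ref{cor:plot-sym}), and obtains pair symmetry from the algebraic Lemma~\ref{lem:octahedron}. There are two small differences: you prove last-pair antisymmetry directly with two fields $Z,Y$ instead of by polarization, and the plotwise inputs you flag as obstacles are already available as \cite[Proposition~6.26]{Shio} and Lemma~\ref{lem:symmetry} \cite[Lemma~6.27]{Shio}, so no further verification is needed.
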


Every vertical connection on a finite-dimensional manifold is curvature-tensorial, and curvature-tensoriality passes to mapping spaces $C^\infty(\Sigma,N)$ (Proposition~\ref{prop:tensorial}); in particular Theorem~A applies to the mapping spaces with the $L^2$ metric of \cite[\S\S7.3--7.4]{Shio}.

Consequently the sectional curvature and the curvature form on $\Lambda^2T_xM$ are defined (Section~\ref{sec:sec}). A Ricci tensor cannot in general be obtained by tracing the curvature: already on $C^\infty(S^1,S^2)$ the relevant operator on the Hilbert completion is not of trace class (Example~\ref{ex:noric}).

In Section~\ref{sec:var} we fix a curvature-tensorial Levi-Civita connection $K$. Let $\XX_0(\gamma)$ be the space of fields along a path $\gamma$ vanishing at $t=0,1$, and let
\[
I(V,W)=\int_0^1\Bigl[g(\nabla_tV,\nabla_tW)-\Rfour(V,\dot\gamma,\dot\gamma,W)\Bigr]dt
\]
be the index form of a path $\gamma\in\Path(M;x,y)$ satisfying the geodesic equation on $[0,1]$.

\begin{mainthm}[Theorem~\ref{thm:hessian}, Corollary~\ref{cor:2ndvar}]
The energy functional is smooth on $\Path(M;x,y)$. Let $\gamma\in\Path(M;x,y)$ satisfy the geodesic equation on $[0,1]$. For every $2$-parameter variation plot $P$ of $\gamma$ with variation fields $V_1,V_2$,
\[
\partial_1\partial_2(E\circ P)(0)=I\bigl(V_1(0,\cdot),V_2(0,\cdot)\bigr).
\]
In particular $\frac{d^2}{ds^2}\big|_0E(P(s))=I(W,W)$ for every variation plot $P$ of $\gamma$ with variation field $W$, and $I$ is symmetric.
\end{mainthm}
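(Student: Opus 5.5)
We follow the classical second-variation computation, with charts and the exponential map replaced by plots of the path space and by the plotwise commutator identity of Theorem~\ref{thm:commute}. Throughout, a $2$-parameter variation plot is a plot $P\colon U\to\Path(M;x,y)$ with $U\subset\RR^2$ an open neighbourhood of $0$ and $P(0)=\gamma$. By cartesian closedness it corresponds to a smooth map $\hat P\colon U\times[0,1]\to M$ with $\hat P(s,0)=x$ and $\hat P(s,1)=y$. We write $\partial_1,\partial_2,\partial_t$ for its coordinate fields and $\nabla_1,\nabla_2,\nabla_t$ for the covariant derivatives along the plot induced by $K$. The variation fields are $V_i=\partial_i\hat P$, which are fields along $\hat P$ vanishing at $t=0,1$.

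\textbf{Smoothness.} We show that $E\circ P$ is smooth for every plot $P$. Write $E(P(s))=\tfrac12\int_0^1 g(\partial_t\hat P,\partial_t\hat P)\,dt$. The integrand is $g$ applied to the plot $T\hat P\circ\partial_t$ of $TM\times_M TM$, so it is a smooth function on $U\times[0,1]$ because $g$ is a smooth morphism. Integration over the compact parameter interval preserves smoothness and commutes with $\partial_1,\partial_2$ by differentiation under the integral sign, so $E$ is smooth on $\Path(M;x,y)$.

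\textbf{First derivative.} By metric compatibility of $K$ along plots, $\partial_2 E = \int_0^1 g(\nabla_2\partial_t\hat P,\partial_t\hat P)\,dt$. Torsion-freeness along plots gives $\nabla_2\partial_t\hat P=\nabla_t V_2$, since coordinate fields of a plot commute and there is no bracket term. Hence
\[
\partial_2(E\circ P)=\int_0^1 g(\nabla_t V_2,\partial_t\hat P)\,dt .
\]
\textbf{Second derivative.} Differentiating again with metric compatibility gives
\[
\partial_1\partial_2(E\circ P)=\int_0^1\Bigl[g(\nabla_1\nabla_tV_2,\partial_t\hat P)+g(\nabla_tV_2,\nabla_tV_1)\Bigr]dt ,
\]
where we used $\nabla_1\partial_t\hat P=\nabla_tV_1$ again. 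For the first term, Theorem~\ref{thm:commute} gives $\nabla_1\nabla_tV_2=\nabla_t\nabla_1V_2+\Rpt(V_1,\partial_t\hat P,V_2)$; we need to fix the argument order and sign against the convention $R(X,Y)Z=\nabla_X\nabla_YZ-\nabla_Y\nabla_XZ$. We then evaluate at $s=0$ and integrate $g(\nabla_t\nabla_1V_2,\dot\gamma)$ by parts. The boundary term vanishes because $\nabla_1V_2$ vanishes at $t=0,1$: the endpoints are fixed, so $\nabla_1 V_2$ is the derivative of the constant zero along those edges. The bulk term is $-\int_0^1 g(\nabla_1V_2,\nabla_t\dot\gamma)\,dt$, which vanishes by the geodesic equation. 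The curvature term becomes $g(\Rpt(V_1,\dot\gamma,V_2),\dot\gamma)$. By the symmetries in Theorem~\ref{thm:S}, namely antisymmetry in each pair and pair symmetry, this equals $-\Rfour(V_2,\dot\gamma,\dot\gamma,V_1)$. The remaining term is symmetric in $V_1,V_2$, so altogether we obtain $I(V_1(0,\cdot),V_2(0,\cdot))$, and symmetry of $I$ follows from $\Rfour(V,\dot\gamma,\dot\gamma,W)=\Rfour(W,\dot\gamma,\dot\gamma,V)$, again from Theorem~\ref{thm:S}.

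\textbf{Corollary.} For a $1$-parameter variation $P$ with field $W$, we apply the formula to the $2$-parameter plot $(s_1,s_2)\mapsto P(s_1+s_2)$. Then $V_1=V_2=W$ and $\partial_1\partial_2=\frac{d^2}{ds^2}$ at $0$, which gives $I(W,W)$.

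\textbf{Main obstacle.} The delicate points are justifying the calculus along plots rather than charts. This covers three things: metric compatibility and torsion-freeness of $K$ in the form $\partial_i g(A,B)=g(\nabla_iA,B)+g(A,\nabla_iB)$ and $\nabla_i\partial_j=\nabla_j\partial_i$ for fields along a plot of $M$; the fundamental theorem of calculus needed for the integration by parts; and matching the sign and argument order in Theorem~\ref{thm:commute} with the definition of $\Rfour$. We would first check that the pullback of $K$ along $\hat P$ lives on an open subset of $\RR^3$. There these identities reduce to the defining properties of a Levi-Civita connection recalled from \cite{Shio}, together with ordinary calculus on Euclidean domains.
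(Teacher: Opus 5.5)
Your proposal is correct and follows essentially the same route as the paper: the first-variation formula, metric compatibility and Lemma~\ref{lem:symmetry} along the adjoint plot, Theorem~\ref{thm:commute} in the directions $\partial_1,\partial_t$, and integration by parts at $s=0$ with the boundary term killed by the restriction principle, using the geodesic equation on $[0,1]$. The corollary then follows from the reparametrization $P(s_1+s_2)$, as in the paper.

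Minor remarks. The adjoint should be taken on the open domain $U\times\RR$, since paths are defined on $\RR$, not on $U\times[0,1]$. For the curvature term you only need antisymmetry in the last two arguments (Corollary~\ref{cor:plot-sym}(2)). This gives $-\Rfour(V_1^0,\dot\gamma,\dot\gamma,V_2^0)$ directly, with no detour through pair symmetry and symmetry of $I$. Finally, the identities you list as the ``main obstacle'' are already available as \eqref{eq:metric-along} and Lemma~\ref{lem:symmetry}, so no pullback of $K$ to a Euclidean domain is needed.
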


For Jacobi fields and for minimality we use the hypotheses (SH1)--(SH3) of Definition~\ref{dfn:SH}: a full connection whose vertical part is a curvature-tensorial Levi-Civita connection, a geodesic flow fixing the zero section, and $M\in\Elastc$. Apart from curvature-tensoriality, (SH1) and (SH2) are the hypotheses of \cite[Theorem~A]{Shio}.

\begin{mainthm}[Theorem~\ref{thm:jacobi}]
Assume \textup{(SH1)--(SH3)}. Along every geodesic $\gamma$ with $\gamma(0)=x$ and for every $a,b\in T_xM$ there is a unique Jacobi field $J$ with $J(0)=a$ and $\nabla_tJ(0)=b$.
\end{mainthm}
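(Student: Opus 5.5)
The plan is to realize Jacobi fields as flow lines of the linearized geodesic flow on $T^2M$, so that existence comes from (SH2) and uniqueness from the curve-object property (SH3). By (SH2) the geodesic flow is a morphism $\Phi\colon D\to TM$, defined on an open domain $D\subseteq\RR\times TM$ containing $\{0\}\times TM$ and fixing the zero section. Applying the tangent functor gives $T\Phi\colon TD\to T^2M$. This is a flow for the vector field $T\xi$ composed with the canonical flip $c\colon T^2TM\to T^2TM$, where $\xi\colon TM\to T^2M$ is the geodesic spray. In other words, $T\Phi$ is the tangent lift of the spray flow.

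Let $\gamma(t)=p\Phi(t,v)$ with $v=\dot\gamma(0)$. Using the full connection from (SH1), I would split $T_vTM\cong T_xM\oplus T_xM$ into horizontal and vertical parts. Then I take the element $\eta\in T_vTM$ with horizontal part $a$ and vertical part $b$, and set
\[
J(t)=Tp\bigl(T\Phi(t,\eta)\bigr).
\]

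For \emph{existence}, there are three checks.
\begin{itemize}
\item $J$ is a smooth field along $\gamma$, because it is a composite of morphisms along a plot.
\item The initial data are right. We have $J(0)=Tp(\eta)=a$, and $\nabla_tJ(0)=K(\eta)=b$. The latter uses that the connector $K$ applied to the derivative of the lift equals the covariant derivative along the plot $s\mapsto\Phi(t,\eta_s)$, where $\eta_s$ is a curve representing $\eta$ via elasticity (E1), as in Lemma~\ref{lem:joint}.
\item $J$ satisfies the Jacobi equation. Choose a plot $P(s,t)=p\Phi(t,\eta_s)$. This is a variation of $\gamma$ through geodesics, whose variation field is $J$. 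Each curve $t\mapsto P(s,t)$ satisfies $\nabla_t\partial_tP=0$. Applying $\nabla_s$ and using the commutation formula of Theorem~\ref{thm:commute} (no bracket term) together with torsion-freeness, $\nabla_s\partial_t=\nabla_t\partial_s$, gives
\[
0=\nabla_s\nabla_t\partial_tP=\nabla_t\nabla_t\partial_sP+C_K(\partial_s,\partial_t,\partial_t)\;=\;\nabla_t^2J+\Rpt(J,\dot\gamma,\dot\gamma)
\]
at $s=0$, with the sign matching the index form convention. Curvature-tensoriality is what lets the curvature term be written through $\Rpt$ evaluated on the vectors $J$ and $\dot\gamma$.
\end{itemize}

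For \emph{uniqueness}, I would show that any Jacobi field $J'$ with the same data gives a curve $t\mapsto\widehat J'(t)\in T^2M$. This curve is the horizontal/vertical combination of $J'$ and $\nabla_tJ'$, lifted over $\dot\gamma$. The Jacobi equation says exactly that $\widehat J'$ is an integral curve of the linearized spray $c\circ T\xi$. Since by (SH3) $\RR$ is a curve object for $T^2M$, integral curves with the same initial value agree. It follows that $\widehat J'=T\Phi(\cdot,\eta)$, and projecting gives $J'=J$.

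The main obstacle is the translation step: showing that the Jacobi equation is equivalent to being an integral curve of the linearized spray. This requires computing $T\xi$ in terms of $K$, the horizontal lift and the curvature $C_K$ without local coordinates. I would do it plotwise, via Lemma~\ref{lem:joint} and Theorem~\ref{thm:commute}, testing on the plot $P$ above.
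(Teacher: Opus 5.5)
Your outline matches the paper's proof in structure. Existence comes from varying through flow lines $F(s,t)=\Phi(t,c(s))$, where $c$ is a curve in $TM$ with velocity $\Theta^{-1}(v_0,a,b)$ at $s=0$, followed by Proposition~\ref{prop:jacobi-var}. Uniqueness comes from lifting to $\widetilde J=\Theta^{-1}\circ\langle\dot\gamma,J,\nabla_tJ\rangle$, an integral curve of $S^\uparrow=\tau_{TM}\circ T(S)$, and using that $(\RR,\partial,0)$ is preinitial for $T^2M$. Obtaining $c$ from Lemma~\ref{lem:joint} applied to the elastic space $TM$ is fine; the paper instead uses that $\delta_{TM}$ is a subduction.

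Two steps are missing, however. First, you begin with $\gamma=\Phi(\cdot,\dot\gamma(0))$, but the statement is about an arbitrary geodesic. Since no uniqueness of $\Phi$ is assumed, a geodesic is not a priori a flow line, so as written your construction only yields Jacobi fields along flow lines. Closing this needs a second, separate use of (SH3), now for $TM$. The curve $\gamma$ is a geodesic if and only if $\dot\gamma$ is an integral curve of $S$: the $\Theta$-components of $T(\dot\gamma)\circ\partial$ and $S\circ\dot\gamma$ are $(\dot\gamma,\dot\gamma,\nabla_t\dot\gamma)$ and $(\dot\gamma,\dot\gamma,0)$. Moreover $\Phi^{(1)}(\cdot,\dot\gamma(0))$ is another integral curve with the same initial value. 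The paper gets this from \cite[Proposition~6.23]{Shio}. Relatedly, the flow in (SH2) is a global morphism $\RR\times TM\to M$, whose $TM$-valued version is $\Phi^{(1)}$; it is not a local flow on an open domain. Globality is what makes $J$ defined on all of $\RR$.

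Second, you rightly single out the linearization step as the crux, but you do not carry it out, and the route you indicate does not reach it. Testing on the plot $P$ only gives information at the points $\widetilde J(t)$ of the field you constructed. Uniqueness requires that $\widetilde{J'}$ be an integral curve of $S^\uparrow$ for an \emph{arbitrary} Jacobi field $J'$, i.e.\ you must evaluate $S^\uparrow$ at the points $\widetilde{J'}(t)$.

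The paper's Lemma~\ref{lem:linearize} does this at the level of morphisms, without plots. Since $T(\Theta)=\langle T(\pi_{TM}),T^2(\pi_M),T(K)\rangle$ is a diffeomorphism, it suffices to compare the images of $T(\widetilde J)\circ\partial$ and $S^\uparrow\circ\widetilde J$ under these three maps, each read through $\Theta$. The first two comparisons are formal: they use \eqref{eq:pi-tau}, \eqref{eq:H}, the geodesic equation and torsion-freeness $K\circ\tau_M=K$. In the third, the definition of $C_K$ gives $K\circ T(K)\circ\tau_{TM}\circ T(S)=K\circ T(K\circ S)+C_K\circ T(S)$. The first term vanishes by $K\circ S=0_M\circ\pi_M$ and \eqref{eq:K-zero}. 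Because $\Pi\circ T(S)=\langle\pi_{TM},T(\pi_M),\pi_{TM}\rangle$, the second term evaluates on $\widetilde J$ to $\Rpt(\dot\gamma,J,\dot\gamma)=-\Rpt(J,\dot\gamma,\dot\gamma)$; this is where $C_K$, and hence curvature-tensoriality, enters.

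Finally, your claim that $T\Phi$ is the flow of the tangent lift of the spray is unproved, but it is also unnecessary. Once Lemma~\ref{lem:linearize} is available, you can compare $\widetilde J$ and $\widetilde{J'}$ directly.
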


Existence uses the effectiveness of the connection and a geodesic flow; uniqueness uses the curve-object condition for $T^2M$ (Remark~\ref{rem:why-all-n}).

\begin{mainthm}[Corollary~\ref{cor:conj}, Theorems~\ref{thm:nonpos} and \ref{thm:nonmin}]
Let $\gamma\in\Path(M;x,y)$ satisfy the geodesic equation on $[0,1]$.
\begin{enumerate}[label=\textup{(\arabic*)}]
\item The null space of $I$ consists of the fields in $\XX_0(\gamma)$ satisfying the Jacobi equation on $[0,1]$; hence $I$ is weakly non-degenerate \textup{(}modulo fields vanishing on $[0,1]$\textup{)} if and only if $\gamma(1)$ is not conjugate to $\gamma(0)$.
\item If $\sect\le0$, then $I(W,W)\ge\int_0^1\|\nabla_tW\|^2dt$, and no point $\gamma(t_0)$ with $0<t_0\le1$ is conjugate to $\gamma(0)$.
\item Under \textup{(SH1)} and \textup{(SH2)}, if $I(W,W)<0$ for some $W\in\XX_0(\gamma)$, then $\gamma$ does not minimize length among paths from $x$ to $y$.
\end{enumerate}
\end{mainthm}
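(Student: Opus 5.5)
The three parts will be proved separately, each from Theorem~B and the plot-based machinery of Section~\ref{sec:var}.

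\emph{Part (1).} For $V,W\in\XX_0(\gamma)$ I integrate the first term of $I$ by parts, using metric compatibility of the Levi-Civita connection along $\gamma$: $\partial_t g(\nabla_tV,W)=g(\nabla_t^2V,W)+g(\nabla_tV,\nabla_tW)$. Since $W(0)=W(1)=0$, this gives
\[
I(V,W)=-\int_0^1 g\bigl(\nabla_t^2V+\Rpt(V,\dot\gamma,\dot\gamma),W\bigr)\,dt .
\]
Here I use the symmetries of Theorem~A to rewrite $\Rfour(V,\dot\gamma,\dot\gamma,W)$ as $g(\Rpt(V,\dot\gamma,\dot\gamma),W)$, with the sign convention matching the Jacobi equation. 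If $V$ is Jacobi on $[0,1]$, then $V$ lies in the null space. For the converse, set $A=\nabla_t^2V+\Rpt(V,\dot\gamma,\dot\gamma)$, which is a smooth field along $\gamma$. I take $W=\phi A$ with $\phi\ge0$ a smooth bump on $\RR$ supported in $(0,1)$; this needs no bump functions on $M$, only on the parameter line, and multiplying a field along $\gamma$ by a scalar function stays within fields along $\gamma$. Then $\int\phi\|A\|^2=0$ for all such $\phi$, so $A=0$ on $(0,1)$, and by continuity on $[0,1]$. The statement about conjugate points is then just the definition: conjugacy means there is a nonzero Jacobi field on $[0,1]$ vanishing at both ends, modulo fields vanishing on $[0,1]$.

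\emph{Part (2).} Pointwise, $-\Rfour(W,\dot\gamma,\dot\gamma,W)=-\sect(W,\dot\gamma)\,\bigl(\|W\|^2\|\dot\gamma\|^2-g(W,\dot\gamma)^2\bigr)\ge0$, using the sign convention for $\sect$ from Section~\ref{sec:sec}; when $W$ and $\dot\gamma$ are dependent the term vanishes. Integrating gives the inequality. Now suppose $J$ is a Jacobi field along $\gamma|_{[0,t_0]}$ with $J(0)=J(t_0)=0$. Applying the same identity on $[0,t_0]$ to the reparametrized geodesic, or directly computing $\frac{d}{dt}g(\nabla_tJ,J)=\|\nabla_tJ\|^2-\Rfour(J,\dot\gamma,\dot\gamma,J)\ge\|\nabla_tJ\|^2$ and integrating, yields $\int_0^{t_0}\|\nabla_tJ\|^2\le0$. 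Hence $\nabla_tJ=0$, and $\|J\|^2$ is constant and equal to $0$ on $[0,t_0]$. So $J$ vanishes on $[0,t_0]$, and $\gamma(t_0)$ is not conjugate to $\gamma(0)$.

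\emph{Part (3).} This is the main obstacle. It must avoid the curve-object property, and hence Jacobi-field uniqueness. My plan:
\begin{itemize}[label=--]
\item Use (SH1) and effectiveness to realize $W$ as the variation field of a variation plot $P$ of $\gamma$ with fixed endpoints. Concretely, I would apply the exponential-type construction from the geodesic flow, $P(s)(t)=\mathrm{pr}\circ\Phi_1(sW(t))$, or use the construction from \cite{Shio} of variation plots in $\Path(M;x,y)$ realizing a given field in $\XX_0(\gamma)$. Condition (SH2), the flow fixing the zero section, ensures $P(0)=\gamma$ and that the endpoints stay fixed.
\item Theorem~B then gives $\frac{d^2}{ds^2}\big|_0E(P(s))=I(W,W)<0$. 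Since $\gamma$ is critical by \cite[Theorem~A]{Shio}, the first derivative vanishes, so $E(P(s))<E(\gamma)$ for small $s\neq0$.
\item Pass from energy to length via Cauchy--Schwarz, $L(c)^2\le E(c)$ (with $E=\int\|\dot c\|^2$), together with $L(\gamma)^2=E(\gamma)$ because a geodesic has constant speed; constant speed follows from $\partial_tg(\dot\gamma,\dot\gamma)=2g(\nabla_t\dot\gamma,\dot\gamma)=0$. This gives $L(P(s))\le E(P(s))^{1/2}<L(\gamma)$.
\end{itemize}
The delicate point is the existence of the variation plot with prescribed variation field in the first step. If the flow only gives the correct first-order field but distorts the endpoints, I would correct $P$ by reparametrizing via the connection. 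Failing that, I would restrict to fields $W$ obtained from plots directly, using Lemma~\ref{lem:joint} to approximate.
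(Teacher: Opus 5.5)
Parts (1) and (2) are correct and follow the paper. For the null space you integrate by parts and test against a bump $\varphi Z$ on the parameter line. For conjugate points, your integration of $\frac{d}{dt}g(\nabla_tJ,J)\ge\|\nabla_tJ\|^2$ between the two zeros of $J$ is equivalent to the paper's convexity argument for $\frac12g(J,J)$. Note that rewriting $\Rfour(V,\dot\gamma,\dot\gamma,W)$ as $g(\Rpt(V,\dot\gamma,\dot\gamma),W)$ is simply the definition of $\Rfour$; no symmetries enter there.

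The gap is in part (3), at the step you flag, but your concrete candidate fails for a different reason than the one you anticipate. For $\ad(P)(s,t)=\Phi(1,sW(t))$, (SH2) does give $P(0)=\gamma$ and fixed endpoints. What fails is the variation field. At $s=0$ it is $T(\Phi(1,-))$ applied to the vertical lift of $W(t)$ at $0_M(\gamma(t))$. Identifying this with $W(t)$ is the classical $d(\exp_p)_0=\id$, which rests on the homogeneity $\Phi(1,sv)=\Phi(s,v)$ of the flow. No such homogeneity is available: $\Phi$ is just some solution of the second-order system, and no uniqueness is assumed.

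Even to first order, the natural argument compares two Jacobi fields along the constant geodesic at $x$, namely $\partial_s|_{s=0}\Phi(t,sv)$ and $\partial_s|_{s=0}\Phi(st,v)=tv$. They share the initial data $(0,v)$, but concluding that they agree needs uniqueness of Jacobi fields, i.e.\ (SH3) (Theorem~\ref{thm:jacobi}). That is exactly what part (3) must avoid. Your fallbacks do not repair this. The ``reparametrization via the connection'' is unspecified. Approximating $W$ by plot-generated fields would require a topology on $\XX_0(\gamma)$ in which $I$ is continuous and such fields are dense, and none of this is available.

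The missing idea is to flow for time $s$ starting at $W(t)$, rather than for time $1$ starting at $sW(t)$. Take $\ad(P_W)(s,t):=\Phi(s,W(t))$, the variation plot of \cite[Lemma~6.32]{Shio} (equivalently Lemma~\ref{lem:transport} with $Q=\gamma$, $Y=W$). This is the ``construction from \cite{Shio}'' you allude to, and it must be the one used. Then:
\begin{itemize}
\item $P_W(0)=\gamma$, since $\Phi(0,v)=\pi_M(v)$;
\item the variation field at $s=0$ is $\Phi^{(1)}(0,W(t))=W(t)$, directly by the initial condition of the flow;
\item the endpoints stay fixed, because $W(0),W(1)$ are zero vectors and $\Phi$ fixes the zero section.
\end{itemize}
Neither homogeneity, nor uniqueness, nor effectiveness is used. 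With this plot, the rest of your argument is exactly the paper's proof: vanishing first variation, Corollary~\ref{cor:2ndvar}, constant speed of $\gamma$ on $[0,1]$, and $L^2\le 2E$ in the paper's normalization.
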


Part~(3) does not require (SH3); its application to mapping spaces is Remark~\ref{rem:mapping-var}.

Thus several local ingredients familiar from Morse theory---smoothness of the energy, its first and second variation, and the Jacobi-field description of the null space---are available in elastic diffeology. We do not prove a Morse lemma, an index theorem, or a general local-minimality theorem.

\subsection*{Organization}
Section~\ref{sec:prelim} collects, with precise references, the definitions and results of \cite{Shio} used in this paper. Section~\ref{sec:tensorial} introduces curvature-tensorial connections. Section~\ref{sec:commute} proves the commutation formula along plots and its consequences. Section~\ref{sec:sym} establishes the symmetries of the curvature, globally and pointwise. Section~\ref{sec:sec} treats sectional curvature, the curvature form and the obstruction to defining Ricci curvature by a trace. Section~\ref{sec:var} contains the second variation formula, Jacobi fields and the index form.

\subsection*{Conventions}
We follow the conventions of \cite{Shio}; in particular composition is written in applicative order, every diffeological space is elastic unless stated otherwise, and Riemannian metrics are definite.

\subsection*{Acknowledgements}
I would like to express my sincere gratitude to my supervisor, Keiichi Sakai, for his valuable guidance and continuous support since my undergraduate years.
I am also grateful to my sub-advisor, Katsuhiko Kuribayashi, for many insightful discussions and much helpful advice.
This work was supported by JST SPRING, Grant Number JPMJSP2144, at Shinshu University.

\subsection*{Declaration of generative AI in the writing process}
During the preparation of this manuscript, the author used ChatGPT (OpenAI) and Claude (Anthropic) for language editing, for checking proofs, and for discussing the organization of the material.
All mathematical results, proofs, and scientific conclusions were verified by the author, who takes full responsibility for the content of this publication.

\section{Preliminaries}\label{sec:prelim}

\subsection{Tangent structure}
By \cite[Theorem~2.17]{Shio}, Blohmann's tangent functor $T$ makes $\Elast$ a Cartesian tangent category with scalar $\RR$-multiplication, with structure maps $\pi,0,+,\kappa,\tau,\lambda$. Each fibre of $\pi_M\colon TM\to M$ is a real vector space \cite[Remark~2.18]{Shio}, and likewise for the two bundle structures $\pi_{TM},T(\pi_M)\colon T^2M\to TM$; hence the difference of two morphisms into $TM$ lying over the same morphism into $M$ is defined. Besides naturality of $\pi$ and $\tau$ we use the identities
\begin{equation}\label{eq:pi-tau}
\pi_{TM}\circ\tau_M=T(\pi_M),\qquad T(\pi_M)\circ\tau_M=\pi_{TM},
\end{equation}
the second of which follows from the first and $\tau_M^2=\id$. We shall mostly apply them with $M$ replaced by $TM$.

A \emph{vector field along} a smooth map $f\colon N\to M$ is a smooth map $V\colon N\to TM$ with $\pi_M\circ V=f$ \cite[Definition~3.21]{Shio}; we usually say simply \emph{field along} $f$. For a vector field $A$ on $N$, $T(f)\circ A$ is a field along $f$. For a domain $U\subseteq\RR^n$ we write $\partial_1,\dots,\partial_n$ for the coordinate vector fields; they have constant components, so that
\begin{equation}\label{eq:flip-coord}
T(\partial_j)\circ\partial_i=\tau_U\circ T(\partial_i)\circ\partial_j
\end{equation}
(see the proof of \cite[Lemma~6.27]{Shio}). For a plot $P\colon U\to M$ we write $\partial_iP:=T(P)\circ\partial_i$ for the coordinate fields along $P$; in the notation of \cite[Remark~2.11]{Shio}, $\partial_iP(u)=[P,u,e_i]$.

For a smooth map $F\colon X\times Y\to Z$ the partial tangent $T_{(2)}F\colon X\times TY\to TZ$ is defined in \cite[\S2.2.3]{Shio}. Since $T$ of a constant map is the zero section at its value,
\begin{equation}\label{eq:partial-tangent}
T_{(2)}F(x,\xi)=T\bigl(F(x,-)\bigr)(\xi),\qquad x\in X,\ \xi\in TY .
\end{equation}

\subsection{Connections and covariant derivatives}
Let $K\colon T^2M\to TM$ be a vertical connection \cite[Definition~3.1]{Shio}. By \cite[Remark~3.2]{Shio}, $K$ is a linear bundle morphism over $\pi_M$ both for $\pi_{TM}$ and for $T(\pi_M)$; in particular it maps the zero sections of these two bundles to zero:
\begin{equation}\label{eq:K-zero}
K\circ0_{TM}=0_M\circ\pi_M,\qquad K\circ T(0_M)=0_M\circ\pi_M .
\end{equation}
For a field $V$ along $f\colon N\to M$ and $A\in\XX(N)$, the covariant derivative along $f$ is $\nabla^K_AV:=K\circ T(V)\circ A$ \cite[Definition~3.22]{Shio}. Since the connection will always be fixed, we write $\nabla_A$ for $\nabla^K_A$, $\nabla_i$ for $\nabla_{\partial_i}$, and $\nabla_t$ for $\nabla_{\partial_t}$.

\begin{lem}\label{lem:along}
Let $f\colon N\to M$ be smooth, $A\in\XX(N)$ and $V,V'$ fields along $f$.
\begin{enumerate}[label=\textup{(\arabic*)}]
\item $\nabla_A(V+V')=\nabla_AV+\nabla_AV'$ and $\nabla_A(0_M\circ f)=0_M\circ f$.
\item If $j\colon N'\to N$ is smooth and $A'\in\XX(N')$ satisfies $T(j)\circ A'=A\circ j$, then $(\nabla_AV)\circ j=\nabla_{A'}(V\circ j)$.
\item For $\varphi\in C^\infty(N,\RR)$ and $\varphi V:=\kappa_M\circ\langle\varphi,V\rangle$, one has $\nabla_A(\varphi V)=(A\varphi)V+\varphi\nabla_AV$. In particular $\nabla_A(rV)=r\nabla_AV$ for $r\in\RR$, so $\nabla_A$ is $\RR$-linear on fields along $f$.
\end{enumerate}
\end{lem}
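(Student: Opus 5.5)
All three parts follow by unwinding $\nabla_AV=K\circ T(V)\circ A$ and using functoriality of $T$ together with the linearity of $K$ recorded in \cite[Remark~3.2]{Shio}. The only real computation is in (3), where we have to see how $T$ acts on the multiplication $\kappa_M$.

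For (1), start from the fact that $V+V'$ is $+_M\circ\langle V,V'\rangle$, with the pairing taken in the fibre product $TM\times_MTM$. Because the tangent functor preserves these fibre products, $T(V+V')=T(+_M)\circ\langle T(V),T(V')\rangle$. Now $T(+_M)$ is the addition of the bundle $T(\pi_M)\colon T^2M\to TM$, and $K$ is linear for that bundle structure. Hence
\[
K\circ T(V+V')\circ A=K\circ T(V)\circ A+K\circ T(V')\circ A.
\]
For the zero field, write $T(0_M\circ f)\circ A=T(0_M)\circ T(f)\circ A$; then \eqref{eq:K-zero} turns $K\circ T(0_M)$ into $0_M\circ\pi_M$, and $\pi_M\circ T(f)\circ A=f$. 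For (2), compute
\[
\nabla_{A'}(V\circ j)=K\circ T(V)\circ T(j)\circ A'=K\circ T(V)\circ A\circ j=(\nabla_AV)\circ j,
\]
using only functoriality and the hypothesis $T(j)\circ A'=A\circ j$.

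For (3), the approach is to differentiate $\varphi V=\kappa_M\circ\langle\varphi,V\rangle$, where $\kappa_M\colon\RR\times TM\to TM$. Since $T$ preserves products and $T\RR=\RR\times\RR$, we get $T(\varphi V)\circ A=T(\kappa_M)\circ\langle T\varphi\circ A,\,T(V)\circ A\rangle$, and $T\varphi\circ A=(\varphi,A\varphi)$. The key step is to split $T(\kappa_M)$ at a point $((r,s),\xi)$ with $\xi\in T^2M$ into two partial tangents, which are added in the bundle $T(\pi_M)$:
\begin{itemize}
\item $T_{(2)}\kappa_M(r,\xi)=T(\kappa_M(r,-))(\xi)$, by \eqref{eq:partial-tangent};
\item the partial tangent in the $\RR$-direction at $(r,s)$ and base $\pi_{TM}\xi$, which equals $s$ times the vertical lift $\lambda_M(\pi_{TM}\xi)$, because $r\mapsto rv$ is linear.
\end{itemize}
That this decomposition holds in the tangent category with scalar multiplication should follow from the axioms recalled in \cite[Theorem~2.17]{Shio}, or from the corresponding lemma of \cite[\S2.2.3]{Shio}.

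Applying $K$ to each piece then gives the formula. Linearity of $K$ with respect to $T(\pi_M)$ and naturality of the fibrewise structure give $K\circ T(\kappa_M(r,-))=\kappa_M(r,-)\circ K$. The defining property of a vertical connection, $K\circ\lambda_M=\id$ (or the corresponding axiom of \cite[Definition~3.1]{Shio}), gives $K(s\,\lambda_M(v))=sv$. Evaluating both at $\xi=T(V)(A)$ yields
\[
\nabla_A(\varphi V)=\varphi\,\nabla_AV+(A\varphi)V.
\]
The special case of constant $\varphi=r$ has $A\varphi=0$, so it gives $\RR$-linearity.

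The main obstacle is the decomposition of $T(\kappa_M)$ into the two partial tangents, and in particular identifying the $\RR$-direction derivative with $\lambda_M$. I would try to check it first on plots, where it reduces to the Euclidean product rule, and then pass to all of $T(\RR\times TM)$ using the fact that $T$ is a left Kan extension, so tangent vectors are represented by plots \cite[Remark~2.11]{Shio}.
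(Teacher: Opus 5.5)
Your parts (1) and (2) are exactly the paper's proof. For (3) you take the paper's route, which reuses the computation of \cite[Proposition~3.10(4)]{Shio}: split $T(\kappa_M)$ into partial tangents, use linearity of $K$, and use $K\circ\lambda_M=\id$. However, the decomposition at the heart of (3) is misstated in two places, and as written that step does not work.

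\textbf{First error: the wrong bundle.} The two partial tangents of $\kappa_M$ at $((r,s),\xi)$ are added in $\pi_{TM}$, the tangent bundle of the target $TM$, not in $T(\pi_M)$. Put $v:=\pi_{TM}\xi\in T_xM$.
\begin{itemize}
\item Under $\pi_{TM}$, both pieces lie over $rv$.
\item Under $T(\pi_M)$, the piece $T_{(2)}\kappa_M(r,\xi)$ lies over $T(\pi_M)\xi$, while the $\RR$-piece lies over $0_M(x)$. So in general they are not in a common $T(\pi_M)$-fibre.
\end{itemize}
On $M=\RR^n$, for $\xi=(u,a,b,c)$, one has $T(\kappa_M)((r,s),\xi)=(u,ra,b,sa+rc)$, with pieces $(u,ra,b,rc)$ and $(u,ra,0,sa)$. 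The fix is to invoke that $K$ is also linear for $\pi_{TM}$ \cite[Remark~3.2]{Shio}; that is the linearity this step needs.

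\textbf{Second error: the $\RR$-piece.} It is not $s\lambda_M(v)$. Under both projections $\lambda_M(v)$ lies over $0_M(x)$, whereas the $\RR$-piece lies over $rv$. Differentiating $t\mapsto rv+tv$ at $t=0$ shows that the $\RR$-piece is $T(+_M)\bigl\langle 0_{TM}(rv),\,s\lambda_M(v)\bigr\rangle$. The fact you should check on plots is the identification of the derivative of $t\mapsto tv$ at $0$ in direction $s$ with $s\lambda_M(v)$. Then linearity of $K$ for $T(\pi_M)$, \eqref{eq:K-zero} and $K\circ\lambda_M=\id$ give $K$ of the $\RR$-piece equal to $0_M(x)+sv=sv$, as required.

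Your treatment of the other piece is correct: $K\circ T(\kappa_M(r,-))=\kappa_M(r,-)\circ K$ holds because $T(\kappa_M(r,-))$ is scalar multiplication by $r$ in the bundle $T(\pi_M)$. With the two corrections above, your argument is complete and agrees with the paper's.
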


\begin{proof}
(1) As in \cite[Proposition~3.10(3)]{Shio}: $T(+_M)$ is the addition of the bundle $T(\pi_M)$, for which $K$ is additive. Moreover $T(0_M\circ f)\circ A=T(0_M)\circ T(f)\circ A$, and \eqref{eq:K-zero} applies.
(2) $K\circ T(V)\circ A\circ j=K\circ T(V)\circ T(j)\circ A'=K\circ T(V\circ j)\circ A'$.
(3) The computation in the proof of \cite[Proposition~3.10(4)]{Shio} applies verbatim with $(X,Y,f)$ replaced by $(A,V,\varphi)$: it only uses the decomposition of $T(\kappa_M)$ into partial tangents, axioms (a) and (f) of a vertical connection, and the identities $\pi_{TM}\circ T(V)\circ A=V\circ\pi_N\circ A=V$, $\pi_\RR\circ T(\varphi)\circ A=\varphi$ and $\mathrm{pr}_2\circ T(\varphi)\circ A=A\varphi$. For constant $\varphi=r$ we have $A\varphi=0$.
\end{proof}

Lemma~\ref{lem:along}(2) will be applied to slices $j(t)=(s_0,t)$ and to restrictions to open subsets; we refer to it as the \emph{restriction principle}.

\begin{lem}[{\cite[Lemma~6.27]{Shio}}]\label{lem:symmetry}
If $K$ is torsion-free and $P\colon U\subseteq\RR^n\to M$ is a plot, then $\nabla_i\partial_jP=\nabla_j\partial_iP$ for all $i,j$.
\end{lem}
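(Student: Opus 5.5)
The statement is \cite[Lemma~6.27]{Shio}, so the plan follows the structure of that proof. The two quantities to compare are
\[
\nabla_i\partial_jP = K\circ T(\partial_jP)\circ\partial_i = K\circ T(T(P))\circ T(\partial_j)\circ\partial_i
\]
and
\[
\nabla_j\partial_iP = K\circ T(T(P))\circ T(\partial_i)\circ\partial_j .
\]
Both expand by functoriality of $T$ applied to $\partial_jP=T(P)\circ\partial_j$.

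The first step is to use the flip identity \eqref{eq:flip-coord}, $T(\partial_j)\circ\partial_i=\tau_U\circ T(\partial_i)\circ\partial_j$. Substituting it gives
\[
\nabla_i\partial_jP = K\circ T^2(P)\circ\tau_U\circ T(\partial_i)\circ\partial_j .
\]
The second step is naturality of the canonical flip $\tau$, namely $T^2(P)\circ\tau_U=\tau_M\circ T^2(P)$, which turns this into
\[
\nabla_i\partial_jP = K\circ\tau_M\circ T^2(P)\circ T(\partial_i)\circ\partial_j .
\]

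The third step invokes torsion-freeness in the form used in \cite{Shio}. For a vertical connection, the torsion is the difference $K\circ\tau_M-K$. Both terms are maps $T^2M\to TM$ over the same base map: by \eqref{eq:pi-tau} the flip exchanges the two bundle projections, and $K$ is a bundle morphism over $\pi_M$ for either one. So their difference is defined, and torsion-free means $K\circ\tau_M=K$. Applying this yields $\nabla_i\partial_jP=K\circ T^2(P)\circ T(\partial_i)\circ\partial_j=\nabla_j\partial_iP$.

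The only delicate point is making sure the paper's definition of torsion-free is exactly $K\circ\tau_M=K$. If the definition is stated instead through global vector fields, as $\nabla_XY-\nabla_YX=[X,Y]$, the argument above does not apply directly, and one must first extract the identity $K\tau=K$. I would try to cite the relevant proposition in \cite{Shio} that records this equivalence, rather than rederive it without localization. With the definition $K\circ\tau_M=K$ in hand, the remaining steps are formal diagram chasing.
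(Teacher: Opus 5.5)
Your proof is correct and follows the paper's route: the paper defers to the proof of \cite[Lemma~6.27]{Shio}, extended to arbitrary $i,j$ by \eqref{eq:flip-coord}, and that argument is the chain you give (functoriality of $T$, the flip identity, naturality of $\tau$, then torsion-freeness), the same pattern the paper uses in Theorem~\ref{thm:commute}. Your concern about the definition does not arise, since the paper itself applies torsion-freeness directly in the form $K\circ\tau_M=K$ (for instance in the proof of Lemma~\ref{lem:linearize}).
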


The proof in \cite{Shio} is written for $n=2$ and applies verbatim to any two coordinate directions, by \eqref{eq:flip-coord}.

A \emph{full connection} $(K,H)$ \cite[Definition~3.15]{Shio} consists of $K$ and a horizontal connection $H\colon TM\times_MTM\to T^2M$ with
\begin{equation}\label{eq:H}
T(\pi_M)\circ H=\mathrm{pr}_1,\qquad \pi_{TM}\circ H=\mathrm{pr}_2,\qquad K\circ H=0_M\circ\pi_M\circ\mathrm{pr}_1 .
\end{equation}
Then $K$ is effective, i.e.\ $\Theta:=\langle\pi_{TM},T(\pi_M),K\rangle\colon T^2M\to T_3M:=TM\times_MTM\times_MTM$ is a diffeomorphism \cite[Proposition~3.16]{Shio}.

\subsection{Metrics}
Let $g$ be a Riemannian metric on $M$ \cite[Definition~4.1]{Shio} and $K$ a Levi-Civita connection for $g$ \cite[Definition~4.11]{Shio}. For fields $V,W$ along $f\colon N\to M$ and $A\in\XX(N)$ we have metric compatibility along maps \cite[Proposition~6.26]{Shio}:
\begin{equation}\label{eq:metric-along}
A\,g(V,W)=g(\nabla_AV,W)+g(V,\nabla_AW).
\end{equation}

\subsection{Curvature}
The curvature morphism of $K$ and its curvature tensor are \cite[Definitions~5.1 and 5.3]{Shio}
\[
C_K:=K\circ T(K)\circ\tau_{TM}-K\circ T(K)\colon T^3M\to TM,\qquad
R^K(X,Y,Z):=C_K\circ T^2(Z)\circ T(X)\circ Y,
\]
for $X,Y,Z\in\XX(M)$, and by \cite[Proposition~5.4]{Shio}
\begin{equation}\label{eq:R-nabla}
R^K(X,Y,Z)=\nabla_X\nabla_YZ-\nabla_Y\nabla_XZ-\nabla_{[X,Y]}Z .
\end{equation}
Following \cite[Example~5.2]{Shio}, we write points of $T^3\RR^n$ as $(v_S)_{S\subseteq\{0,1,2\}}$ with $v_\emptyset=u$: the direction $0$ is the fibre direction of $\pi_M$, the direction $1$ is the outer direction of $T^2M=T(TM)$, and the direction $2$ is the outer direction of $T^3M=T(T^2M)$. Then $\tau_{TM}$ exchanges the directions $1$ and $2$.

\subsection{Geodesics and variations}
We use the notation of \cite[\S6]{Shio}: $\Path(M;x,y)$ is the space of smooth paths $\gamma\colon\RR\to M$ with $\gamma(0)=x$ and $\gamma(1)=y$, and $\dot\gamma=T(\gamma)\circ\partial$. A path $\gamma$ is a \emph{geodesic} if $\nabla_t\dot\gamma=0_M\circ\gamma$, and it \emph{satisfies the geodesic equation on $[0,1]$} if $\nabla_t\dot\gamma(t)=0_M(\gamma(t))$ for all $t\in[0,1]$ \cite[Definition~6.9]{Shio}. For a full connection $(K,H)$ the geodesic spray is $S=H\circ\langle\id,\id\rangle$, so that $\pi_{TM}\circ S=T(\pi_M)\circ S=\id_{TM}$ and $K\circ S=0_M\circ\pi_M$ by \eqref{eq:H}.

A \emph{geodesic flow} of $(K,H)$ is a solution $\Phi\colon\RR\times TM\to M$ of the second-order dynamical system $(M,S,\id_{TM})$ \cite[Definitions~6.13 and 6.16]{Shio}; that is, with
\[
\Phi^{(1)}:=T(\Phi)\circ(\partial\times0_{TM}),\qquad
\Phi^{(2)}:=T(\Phi^{(1)})\circ(\partial\times0_{TM}),
\]
it satisfies the initial and differential conditions
\[
\Phi^{(1)}(0,v)=v,\qquad \Phi^{(2)}=S\circ\Phi^{(1)}.
\]
No uniqueness of $\Phi$ is assumed. The proof of \cite[Lemma~6.31]{Shio} gives the first two identities below, and the existence part of the proof of \cite[Theorem~6.22]{Shio}, which rests on \cite[Lemma~6.19]{Shio}, gives the third (cf.\ the proof of \cite[Proposition~6.23]{Shio}); these arguments use only that $\Phi$ is a geodesic flow:
\begin{equation}\label{eq:flow}
\Phi(0,v)=\pi_M(v),\qquad \Phi^{(1)}(0,v)=v,\qquad t\mapsto\Phi(t,v)\ \text{is a geodesic}.
\end{equation}
The flow $\Phi$ \emph{fixes the zero section} if $\Phi(s,0_M(x))=x$ for all $s\in\RR$ and $x\in M$ \cite[Definition~6.17]{Shio}. If a geodesic flow exists, then $S$ is complete, because $\Phi^{(1)}\colon\RR\times TM\to TM$ is then a solution of the parametrized dynamical system $(TM,S,\id_{TM})$. Conversely, if $M\in\Elastc$ and $S$ is complete, a geodesic flow exists, every geodesic flow fixes the zero section, and every geodesic $\gamma$ satisfies $\gamma(t)=\Phi(t,\dot\gamma(0))$ \cite[Theorem~6.22, Proposition~6.23]{Shio}.

The energy is $E(\gamma)=\frac12\int_0^1g(\dot\gamma,\dot\gamma)\,dt$, and the length of a path is
\[
L(\gamma):=\int_0^1g(\dot\gamma,\dot\gamma)^{1/2}dt
\]
\cite[Definition~6.34]{Shio}; the integrand is continuous and non-negative. By \cite[Lemma~6.35]{Shio}, for every path,
\begin{equation}\label{eq:LE}
L(\gamma)^2\le2E(\gamma),\quad\text{with equality iff }g(\dot\gamma,\dot\gamma)\text{ is constant on }[0,1].
\end{equation}

\begin{dfn}\label{dfn:variation}
Let $\gamma\in\Path(M;x,y)$ and $k\ge1$. A \emph{$k$-parameter variation plot} of $\gamma$ is a plot $P\colon U\to\Path(M;x,y)$, where $U\subseteq\RR^k$ is an open neighbourhood of $0$, with $P(0)=\gamma$. Its adjoint $\ad(P)\colon U\times\RR\to M$ is a plot of $M$, and the \emph{variation fields} of $P$ are the fields
\[
V_i:=\partial_i\ad(P),\qquad V_t:=\partial_t\ad(P),\qquad\text{i.e.}\quad V_i(u,t)=[\ad(P),(u,t),e_i],
\]
along $\ad(P)$, for $i=1,\dots,k$.
\end{dfn}
For $k=1$ and $U=(-\varepsilon,\varepsilon)$ this is a variation plot of $\gamma$ in the sense of \cite[Definition~6.24]{Shio}, since the endpoint conditions there hold automatically for a plot of $\Path(M;x,y)$, and $V_1$ is the field $V_s$ of \cite[Lemma~6.28]{Shio}. We allow arbitrary open neighbourhoods $U$ of $0$ because reparametrizations such as $(s_1,s_2)\mapsto P(s_1+s_2)$ in Corollary~\ref{cor:2ndvar} are not defined on boxes. By the restriction principle, $V_t(0,\cdot)=\dot\gamma$.

\begin{rem}\label{rem:01}
The energy only depends on $\gamma|_{[0,1]}$. Accordingly, a path satisfying the geodesic equation on $[0,1]$ is critical for $E$ \cite[Proposition~6.30]{Shio}, and conversely, under the hypotheses of \cite[Theorem~A]{Shio}, every critical path satisfies the geodesic equation on $[0,1]$ \cite[Proposition~6.33]{Shio}. No conclusion about $\gamma$ outside $[0,1]$ follows from criticality of $E$; we therefore state all variational results for paths satisfying the geodesic equation on $[0,1]$.
\end{rem}

\begin{dfn}[hypotheses]\label{dfn:SH}
Let $(M,g)$ be an elastic Riemannian diffeological space.
\begin{enumerate}[label=(SH\arabic*)]
\item $(K,H)$ is a full connection on $M$ whose vertical part $K$ is a Levi-Civita connection for $g$ and is curvature-tensorial (Definition~\ref{dfn:tensorial}).
\item $\Phi\colon\RR\times TM\to M$ is a geodesic flow of $(K,H)$ that fixes the zero section.
\item $M\in\Elastc$.
\end{enumerate}
\end{dfn}
Hypotheses (SH1) and (SH2) are the standing assumptions of \cite[\S6.6]{Shio}, that is, the hypotheses of \cite[Theorem~A]{Shio}, together with curvature-tensoriality. Under (SH3), every geodesic flow of $(K,H)$ fixes the zero section by the discussion above; so (SH2) holds as soon as $(K,H)$ has a geodesic flow, equivalently as soon as $S$ is complete. These hypotheses are not used in Sections~\ref{sec:tensorial}--\ref{sec:sec}; in Section~\ref{sec:var} they are imposed only where stated (see the beginning of that section).

\section{Curvature-tensorial connections}\label{sec:tensorial}

\subsection{Trilinearity over functions}
The following holds for every vertical connection.

\begin{prop}\label{prop:trilinear}
Let $K$ be a vertical connection on $M$. Then $R^K$ is additive in each variable, and for $X,Y,Z\in\XX(M)$ and $f\in C^\infty(M,\RR)$,
\[
R^K(fX,Y,Z)=R^K(X,fY,Z)=R^K(X,Y,fZ)=f\,R^K(X,Y,Z).
\]
Moreover $R^K(X,Y,Z)=-R^K(Y,X,Z)$.
\end{prop}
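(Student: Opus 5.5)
The plan is to work entirely from the formula \eqref{eq:R-nabla}, $R^K(X,Y,Z)=\nabla_X\nabla_YZ-\nabla_Y\nabla_XZ-\nabla_{[X,Y]}Z$. The needed properties of $\nabla$ on vector fields of $M$ are the case $f=\id_M$ of Lemma~\ref{lem:along}: additivity in the differentiated field (part (1)) and the Leibniz rule $\nabla_A(\varphi V)=(A\varphi)V+\varphi\nabla_AV$ (part (3)). We also need $C^\infty(M,\RR)$-linearity and additivity of $\nabla_AV$ in the direction $A$. I would get these straight from the definition $\nabla_AV=K\circ T(V)\circ A$, since $T(V)$ is fibrewise linear for the bundle $\pi_M$ (by the tangent-category structure, $T(V)$ respects $+_M$ and $\kappa_M$ on fibres of $TM\to M$) and $K$ is linear for the bundle $\pi_{TM}$ by \cite[Remark~3.2]{Shio}. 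So $\nabla_{fA+A'}V=f\nabla_AV+\nabla_{A'}V$. Finally we need the standard bracket identities $[X,Y]=-[Y,X]$, additivity of the bracket, and $[fX,Y]=f[X,Y]-(Yf)X$. These I would take from \cite{Shio}, where the Lie bracket on $\XX(M)$ is defined and \eqref{eq:R-nabla} is proved; if they are not stated explicitly, I would derive them from the tangent-category definition of the bracket.

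Antisymmetry is immediate from the formula: swapping $X$ and $Y$ swaps the first two terms and replaces $\nabla_{[X,Y]}Z$ by $\nabla_{[Y,X]}Z=-\nabla_{[X,Y]}Z$, using linearity in the direction. Additivity in $X$ and $Y$ follows from additivity of $\nabla$ in its direction and of the bracket. Additivity in $Z$ follows from Lemma~\ref{lem:along}(1). Because of antisymmetry, function-linearity in $Y$ follows from function-linearity in $X$.

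For the first slot, I would expand the three terms as follows:
\[
\nabla_{fX}\nabla_YZ=f\nabla_X\nabla_YZ,\qquad
\nabla_Y\nabla_{fX}Z=(Yf)\nabla_XZ+f\nabla_Y\nabla_XZ,
\]
\[
\nabla_{[fX,Y]}Z=f\nabla_{[X,Y]}Z-(Yf)\nabla_XZ.
\]
The $(Yf)\nabla_XZ$ terms cancel, leaving $fR^K(X,Y,Z)$.

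For the third slot, the Leibniz rule gives
\[
\nabla_X\nabla_Y(fZ)=(XYf)Z+(Yf)\nabla_XZ+(Xf)\nabla_YZ+f\nabla_X\nabla_YZ,
\]
and symmetrically for $X\leftrightarrow Y$, together with $\nabla_{[X,Y]}(fZ)=([X,Y]f)Z+f\nabla_{[X,Y]}Z$. The mixed first-order terms cancel. The second-order terms cancel by $[X,Y]f=X(Yf)-Y(Xf)$. The result is $fR^K(X,Y,Z)$.

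I expect the main obstacle to be bookkeeping rather than conceptual. It lies in justifying, within the tangent-category framework, the bracket identities $[X,Y]f=XYf-YXf$ and $[fX,Y]=f[X,Y]-(Yf)X$, and linearity of $\nabla$ in the direction. The most delicate point is that sums and function multiples of fields are formed with $+_M$ and $\kappa_M$ in the fibres of $\pi_M$, and we must check that $T(V)$ preserves them. If these facts are not already recorded in \cite{Shio}, I would prove them first, by naturality of $+$ and $\kappa$ and by evaluating on plots, where everything reduces to the Euclidean computation.
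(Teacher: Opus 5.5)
Your proposal is correct and follows the paper's proof essentially step for step: antisymmetry and additivity from \eqref{eq:R-nabla} together with the bracket properties, the same three-term expansion in the first slot with cancellation of the $(Yf)\nabla_XZ$ terms, antisymmetry for the second slot, and the Leibniz expansion combined with $[X,Y]f=X(Yf)-Y(Xf)$ for the third. The auxiliary facts you plan to verify by hand (function-linearity of $\nabla$ in the direction, the Leibniz rule, and $[fX,Y]=f[X,Y]-(Yf)X$) are exactly those the paper cites from \cite[Propositions~2.31, 2.32 and 3.10]{Shio}, and your sketch of direction-linearity via naturality of $+$ and $\kappa$ and the $\pi_{TM}$-linearity of $K$ would also work.
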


\begin{proof}
Antisymmetry in $(X,Y)$ follows from \eqref{eq:R-nabla} and antisymmetry of the bracket \cite[Proposition~2.31]{Shio}. Additivity follows from \cite[Proposition~3.10(1),(3)]{Shio} and $\RR$-bilinearity of the bracket.

For the first variable, \cite[Propositions~2.31 and 2.32]{Shio} give $[fX,Y]=-(Yf)X+f[X,Y]$. By \cite[Proposition~3.10(2),(4)]{Shio},
\begin{align*}
R^K(fX,Y,Z)
&=f\nabla_X\nabla_YZ-\nabla_Y(f\nabla_XZ)-\nabla_{-(Yf)X+f[X,Y]}Z\\
&=f\nabla_X\nabla_YZ-(Yf)\nabla_XZ-f\nabla_Y\nabla_XZ+(Yf)\nabla_XZ-f\nabla_{[X,Y]}Z
=f\,R^K(X,Y,Z).
\end{align*}
The second variable follows by antisymmetry. For the third,
\[
\nabla_X\nabla_Y(fZ)=(X(Yf))Z+(Yf)\nabla_XZ+(Xf)\nabla_YZ+f\nabla_X\nabla_YZ,
\]
where the two middle terms are symmetric in $(X,Y)$; since $\nabla_{[X,Y]}(fZ)=([X,Y]f)Z+f\nabla_{[X,Y]}Z$ and $[X,Y]f=X(Yf)-Y(Xf)$ \cite[Proposition~2.32]{Shio}, we obtain $R^K(X,Y,fZ)=fR^K(X,Y,Z)$.
\end{proof}

\begin{rem}\label{rem:localize}
Classically, Proposition~\ref{prop:trilinear} implies that $R^K$ is a pointwise object, by localizing with bump functions. This fails here for two independent reasons. A diffeological space may have few smooth functions; for instance every smooth function on the irrational torus $T_\alpha$ is constant, although $T_\alpha$ is elastic and even belongs to $\Elastc$, and carries Riemannian metrics with complete geodesic flow \cite[Corollary~7.1, Example~7.2]{Shio}. And a tangent vector $v\in T_xM$ need not be the value of a global vector field, so an identity in $\XX(M)$ need not say anything about $T_xM$. We therefore formulate the pointwise statement at the level of morphisms, and replace global vector fields by fields along plots; for finitely many tangent vectors at a point this replacement is always available (Lemma~\ref{lem:joint}).
\end{rem}

\subsection{The morphism \texorpdfstring{$\Pi$}{Pi}}

\begin{lem}\label{lem:Pi}
For every $M$ the morphism
\[
\Pi:=\bigl\langle\,T(\pi_M)\circ\pi_{T^2M},\ \ T(\pi_M\circ\pi_{TM}),\ \ \pi_{TM}\circ\pi_{T^2M}\,\bigr\rangle\colon T^3M\longrightarrow T_3M
\]
is well defined, and:
\begin{enumerate}[label=\textup{(\arabic*)}]
\item $\Pi\circ\tau_{TM}=\sigma\circ\Pi$, where $\sigma$ exchanges the first two components of $T_3M$;
\item for a smooth map $f\colon N\to M$, a field $W$ along $f$ and $A,B\in\XX(N)$,
\[
\Pi\circ T^2(W)\circ T(A)\circ B=\bigl\langle T(f)\circ A,\ T(f)\circ B,\ W\bigr\rangle ;
\]
\item in the coordinates of Section~\ref{sec:prelim}, $\Pi\bigl((v_S)_S\bigr)=(u;\,v_{\{1\}},v_{\{2\}},v_{\{0\}})$.
\end{enumerate}
\end{lem}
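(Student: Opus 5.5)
Well-definedness amounts to checking that the three components all have the same image in $M$ under $\pi_M$. For this I use naturality of $\pi$ and the identities \eqref{eq:pi-tau}, applied with $M$ replaced by $TM$ where needed. The first component gives
\[
\pi_M\circ T(\pi_M)\circ\pi_{T^2M}=\pi_M\circ\pi_{TM}\circ T(\pi_{TM})\circ\ldots
\]
More directly, naturality of $\pi$ for the map $\pi_M$ gives $\pi_M\circ T(\pi_M)=\pi_M\circ\pi_{TM}$. Hence the first component lies over $\pi_M\circ\pi_{TM}\circ\pi_{T^2M}$.

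For the second component, naturality of $\pi$ for the map $\pi_M\circ\pi_{TM}$ gives
\[
\pi_M\circ T(\pi_M\circ\pi_{TM})=\pi_M\circ\pi_{TM}\circ\pi_{T^2M},
\]
the same base point. The third component is already $\pi_{TM}\circ\pi_{T^2M}$ followed by $\pi_M$. So $\Pi$ lands in the iterated fibre product $T_3M$, which is a limit in $\Elast$, and $\Pi$ is smooth.

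For (1), I compose each component with $\tau_{TM}$ and apply \eqref{eq:pi-tau} for $TM$, namely $\pi_{T^2M}\circ\tau_{TM}=T(\pi_{TM})$ and $T(\pi_{TM})\circ\tau_{TM}=\pi_{T^2M}$.
\begin{itemize}[label=$\cdot$]
\item First component: $T(\pi_M)\circ\pi_{T^2M}\circ\tau_{TM}=T(\pi_M)\circ T(\pi_{TM})=T(\pi_M\circ\pi_{TM})$. This is the old second component.
\item Second component: by naturality of $\pi$, $T(\pi_M\circ\pi_{TM})=T(\pi_M)\circ T(\pi_{TM})$. Composing with $\tau_{TM}$ gives $T(\pi_M)\circ\pi_{T^2M}$, the old first component.
\item Third component: $\pi_{TM}\circ\pi_{T^2M}\circ\tau_{TM}=\pi_{TM}\circ T(\pi_{TM})=\pi_{TM}\circ\pi_{T^2M}$, by naturality of $\pi$ applied to $\pi_{TM}$. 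It is unchanged.
\end{itemize}
This is exactly $\sigma\circ\Pi$.

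For (2), I compute each component of $\Pi\circ T^2(W)\circ T(A)\circ B$ using naturality of $\pi$ and $\pi_N\circ A=\id$, $\pi_N\circ B=\id$.
\begin{itemize}[label=$\cdot$]
\item First component: $\pi_{T^2M}\circ T^2(W)=T(W)\circ\pi_{TN}$ and $\pi_{TN}\circ T(A)=A\circ\pi_N$. This gives $T(\pi_M)\circ T(W)\circ A\circ\pi_N\circ B=T(\pi_M\circ W)\circ A=T(f)\circ A$.
\item Second component: $T(\pi_M\circ\pi_{TM})\circ T^2(W)\circ T(A)=T(\pi_M\circ\pi_{TM}\circ T(W)\circ A)$. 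Since $\pi_{TM}\circ T(W)\circ A=W\circ\pi_N\circ A=W$, this equals $T(f)$. Composing with $B$ gives $T(f)\circ B$.
\item Third component: the same naturality steps give $\pi_{TM}\circ T(W)\circ A\circ\pi_N\circ B=W$.
\end{itemize}

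For (3), I simply read off the formulas in the coordinates of \cite[Example~5.2]{Shio}:
\begin{itemize}[label=$\cdot$]
\item $\pi_{T^2M}$ forgets direction $2$;
\item $T(\pi_{TM})$ forgets direction $1$;
\item $T(\pi_M)$ forgets direction $0$.
\end{itemize}
Hence the three components retain only $v_{\{1\}}$, $v_{\{2\}}$ and $v_{\{0\}}$ respectively.

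The only real obstacle is bookkeeping: making sure which projection forgets which direction matches the labelling conventions of \cite{Shio}. I would fix this using (3) first, on $\RR^n$, and then use it to double-check (1).
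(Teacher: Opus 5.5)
Your proposal is correct and follows the paper's proof essentially step for step: the lying-over check via naturality of $\pi$, the two identities \eqref{eq:pi-tau} at $TM$ for (1), the naturality computations $\pi_{T^2M}\circ T^2(W)=T(W)\circ\pi_{TN}$ and $\pi_{TN}\circ T(A)=A\circ\pi_N$ for (2), and a direct reading of coordinates for (3). The only blemishes are cosmetic. The abandoned first display in your well-definedness paragraph is garbled. In (1), the identity $T(\pi_M\circ\pi_{TM})=T(\pi_M)\circ T(\pi_{TM})$ is functoriality of $T$, not naturality of $\pi$.
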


\begin{proof}
By naturality of $\pi$ the three components lie over $\pi_M\circ\pi_{TM}\circ\pi_{T^2M}$.

(1) Apply \eqref{eq:pi-tau} at $TM$, i.e.\ $\pi_{T^2M}\circ\tau_{TM}=T(\pi_{TM})$ and $T(\pi_{TM})\circ\tau_{TM}=\pi_{T^2M}$. The first component becomes $T(\pi_M)\circ T(\pi_{TM})=T(\pi_M\circ\pi_{TM})$, the second becomes $T(\pi_M)\circ T(\pi_{TM})\circ\tau_{TM}=T(\pi_M)\circ\pi_{T^2M}$, and the third becomes $\pi_{TM}\circ T(\pi_{TM})=\pi_{TM}\circ\pi_{T^2M}$ by naturality of $\pi$.

(2) By naturality of $\pi$, $\pi_{T^2M}\circ T^2(W)=T(W)\circ\pi_{TN}$. Hence the first component is $T(\pi_M\circ W)\circ\pi_{TN}\circ T(A)\circ B=T(f)\circ A\circ\pi_N\circ B=T(f)\circ A$. The second is $T(\pi_M\circ\pi_{TM}\circ T(W))\circ T(A)\circ B=T(f\circ\pi_N\circ A)\circ B=T(f)\circ B$. The third is $\pi_{TM}\circ T(W)\circ\pi_{TN}\circ T(A)\circ B=W\circ\pi_N\circ A\circ\pi_N\circ B=W$.

(3) is immediate.
\end{proof}

\subsection{Joint realization by plots}
Blohmann's axiom (E1) \cite[Definition~2.16]{Shio} states that for every $k\ge1$ the comparison map
\[
\theta_{k,M}\colon\mathbb{L}\hat T_kM=\operatorname{colim}_{P}\hat T_kU_P\longrightarrow T_kM,\qquad
[P,u,a_1,\dots,a_k]\longmapsto\bigl([P,u,a_1],\dots,[P,u,a_k]\bigr),
\]
is a diffeomorphism. Recall that the cocone map $\hat TU_P\to TM$, $(u,a)\mapsto[P,u,a]$, is $T(P)$ \cite[Remark~2.19]{Shio}. We record two consequences.

\begin{lem}[joint realization]\label{lem:joint}
Let $M$ be elastic, $x\in M$, and $v_1,\dots,v_n,z_1,\dots,z_m\in T_xM$ with $n+m\ge1$. Then there are an open neighbourhood $B$ of $0$ in $\RR^n$, a plot $Q\colon B\to M$ with $Q(0)=x$ and $\partial_iQ(0)=v_i$ for $i=1,\dots,n$, and fields $Z_1,\dots,Z_m$ along $Q$ with $Z_k(0)=z_k$ for $k=1,\dots,m$.
\end{lem}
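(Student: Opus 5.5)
The plan is to apply axiom (E1) with $k=n+m$ to the tuple $(v_1,\dots,v_n,z_1,\dots,z_m)\in T_{n+m}M$, then reparametrize the resulting plot so that its first $n$ coordinate directions realize the $v_i$ and the remaining data become fields along it.

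\emph{Step 1: a common plot.} Since $\theta_{n+m,M}$ is surjective, there are a plot $P\colon U_P\to M$, a point $u\in U_P\subseteq\RR^d$, and vectors $a_1,\dots,a_n,b_1,\dots,b_m\in\RR^d$ such that $[P,u,a_i]=v_i$ and $[P,u,b_k]=z_k$. Because the colimit $\mathbb{L}\hat T_kM$ is taken over plots with a common base point $u$ in each representative, all these vectors are based at the same $u$ with $P(u)=x$.

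\emph{Step 2: reparametrize.} Define the affine map $\alpha\colon\RR^n\to\RR^d$ by $\alpha(s)=u+\sum_i s_ia_i$. Take $B:=\alpha^{-1}(U_P)$, an open neighbourhood of $0$, and set $Q:=P\circ\alpha|_B$. This is a plot because plots are stable under precomposition with smooth maps, and $Q(0)=x$. By functoriality, $\partial_iQ(0)=T(P)(T(\alpha)(\partial_i(0)))=T(P)(u,a_i)=[P,u,a_i]=v_i$, using that the cocone map $\hat TU_P\to TM$ is $T(P)$.

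\emph{Step 3: the fields.} Put $Z_k(s):=T(P)\bigl(\alpha(s),b_k\bigr)=[P,\alpha(s),b_k]$. This is the composite of $s\mapsto(\alpha(s),b_k)\in TU_P$, which is smooth, with $T(P)$, so it is smooth. It satisfies $\pi_M\circ Z_k=P\circ\alpha=Q$, hence it is a field along $Q$, and $Z_k(0)=z_k$.

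\emph{Degenerate case.} If $n=0$ (allowed since $m\ge1$), $\RR^0$ is a point and $Q$ is the constant plot at $x$. The construction above still applies: $\alpha$ is the constant map at $u$ and $Z_k$ is constant at $z_k$.

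\emph{Main obstacle.} Step 1 is the only delicate point. One must make sure that the representatives in $\mathbb{L}\hat T_{n+m}M$ really consist of a single plot at a single point with $n+m$ vectors, so that surjectivity of $\theta$ yields a common $(P,u)$ and not separate plots for each vector. This follows from the description of $\hat T_k U$ as $U\times(\RR^d)^k$ and the explicit formula for $\theta_{k,M}$ recalled above. Only surjectivity of $\theta$ is needed, not injectivity.
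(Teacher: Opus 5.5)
Your proposal is correct and follows essentially the same route as the paper: surjectivity of $\theta_{n+m,M}$ gives a single representative $[P,u,a_1,\dots,a_n,b_1,\dots,b_m]$, and precomposing $P$ with the affine map $\alpha(s)=u+\sum_i s_ia_i$ yields $Q$ together with the fields $Z_k(s)=T(P)(\alpha(s),b_k)$. The paper obtains the common representative by noting that a colimit of diffeological spaces is a quotient of the coproduct, which is exactly the point you identify as the main obstacle.
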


\begin{proof}
By (E1), $\theta_{n+m,M}$ is surjective. Since a colimit in the category of diffeological spaces is a quotient of the coproduct, there are a plot $P\colon U\to M$, a point $u\in U$ and vectors $a_1,\dots,a_n,c_1,\dots,c_m\in\RR^{\dim U}$ with $v_i=[P,u,a_i]$ and $z_k=[P,u,c_k]$; in particular $P(u)=x$. Let $\alpha\colon\RR^n\to\RR^{\dim U}$, $\alpha(t)=u+\sum_it_ia_i$, put $B:=\alpha^{-1}(U)$ and $Q:=P\circ\alpha|_B$. Then $Q(0)=x$, and since $T(\alpha)\circ\partial_i(t)=(\alpha(t),a_i)$,
\[
\partial_iQ(t)=T(P)\bigl(\alpha(t),a_i\bigr)=[P,\alpha(t),a_i],
\]
so $\partial_iQ(0)=v_i$. Put $Z_k(t):=T(P)(\alpha(t),c_k)=[P,\alpha(t),c_k]$. This is smooth, $\pi_M\circ Z_k=P\circ\alpha=Q$ by naturality of $\pi$, and $Z_k(0)=z_k$. (For $n=0$, $B=\RR^0$ and $Q$ is the point $x$.)
\end{proof}

\begin{prop}\label{prop:Pi-subd}
For every elastic space $M$, the morphism $\Pi\colon T^3M\to T_3M$ is a subduction; in particular it is surjective.
\end{prop}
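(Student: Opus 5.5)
We show that $T_3M$ carries the quotient diffeology of $\Pi$. Concretely, for every plot $\rho\colon W\to T_3M$ and every $w_0\in W$ we construct, on a neighbourhood of $w_0$, a plot of $T^3M$ whose composite with $\Pi$ is $\rho$. Surjectivity is the special case $W=\RR^0$.

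\textbf{Step 1: lift to a Euclidean triple-vector plot.} By (E1), $\theta_{3,M}\colon\mathbb{L}\hat T_3M\to T_3M$ is a diffeomorphism. Since $\mathbb{L}\hat T_3M$ is a colimit, hence a quotient of the coproduct $\coprod_P\hat T_3U_P$, every plot of $T_3M$ lifts locally to that coproduct. So after shrinking $W$ around $w_0$ we obtain a plot $P\colon U\to M$ and smooth maps $u\colon W\to U$ and $a_1,a_2,a_3\colon W\to\RR^{\dim U}$ with
\[
\rho(w)=\bigl([P,u(w),a_1(w)],[P,u(w),a_2(w)],[P,u(w),a_3(w)]\bigr).
\]

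\textbf{Step 2: lift in the Euclidean case.} On $T^3U=U\times(\RR^{\dim U})^7$ we use the coordinates $(v_S)$ of Section~\ref{sec:prelim}. Define the smooth map
\[
\tilde\rho\colon W\to T^3U,\qquad v_\emptyset=u(w),\quad v_{\{1\}}=a_1(w),\quad v_{\{2\}}=a_2(w),\quad v_{\{0\}}=a_3(w),
\]
with all other components equal to $0$. By Lemma~\ref{lem:Pi}(3), $\Pi_U\circ\tilde\rho=(u;a_1,a_2,a_3)$.

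\textbf{Step 3: push forward along $P$.} Naturality of $\Pi$ in $M$ gives $\Pi_M\circ T^3(P)=T_3(P)\circ\Pi_U$, where $T_3(P)=T(P)\times T(P)\times T(P)$ on the fibre product. This follows because each component of $\Pi$ is a composite of natural transformations $\pi$ and their $T$-images. Recall also that $T(P)(u,a)=[P,u,a]$ by \cite[Remark~2.19]{Shio}. Hence $T^3(P)\circ\tilde\rho$ is a plot of $T^3M$ and $\Pi\circ T^3(P)\circ\tilde\rho=\rho$ on the shrunken $W$. This proves that $\Pi$ is a subduction.

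\textbf{Main obstacle.} The delicate point is Step 1: justifying that plots of a colimit lift locally to plots of the coproduct, and that the coproduct's plots are locally a single $\hat T_3U_P$ with smoothly varying data. Both facts are standard for quotient diffeologies and coproducts in the category of diffeological spaces. The identification $T^3U\cong U\times(\RR^{\dim U})^7$ holds because $T$ agrees with the usual tangent functor on Euclidean domains.
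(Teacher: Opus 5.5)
Your proof is correct and is essentially the paper's own argument. You lift $\rho$ locally, via (E1) and the quotient/coproduct description of the colimit, to the form $T_3(P)\circ r$. You then apply the explicit section of $\Pi_U$ from Lemma~\ref{lem:Pi}(3), placing $a_1,a_2,a_3$ in the slots $v_{\{1\}},v_{\{2\}},v_{\{0\}}$ with zeros elsewhere, and push forward using the naturality $\Pi_M\circ T^3(P)=T_3(P)\circ\Pi_U$; surjectivity comes from plots defined on $\RR^0$, exactly as in the paper.
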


\begin{proof}
Since $\Pi$ is built from components of the natural transformation $\pi$ and from the functor $T$, it is natural in $M$: for every plot $P\colon U\to M$,
\[
\Pi_M\circ T^3(P)=T_3(P)\circ\Pi_U,\qquad T_3(P):=T(P)\times_PT(P)\times_PT(P).
\]
For a domain $U$, Lemma~\ref{lem:Pi}(3) gives $\Pi_U\bigl((v_S)_S\bigr)=(u;v_{\{1\}},v_{\{2\}},v_{\{0\}})$, which has the smooth section $s_U(u;a,b,c):=(v_S)_S$ with $v_\emptyset=u$, $v_{\{0\}}=c$, $v_{\{1\}}=a$, $v_{\{2\}}=b$ and all other components $0$.

Let $\rho\colon W\to T_3M$ be a plot and $w_0\in W$. By (E1), $\theta_{3,M}^{-1}\circ\rho$ is a plot of $\operatorname{colim}_P\hat T_3U_P$. The diffeology of this colimit is the quotient diffeology of the coproduct, and plots of a coproduct are locally plots of one summand. Hence there are an open neighbourhood $W'$ of $w_0$, a plot $P\colon U\to M$ and a smooth map $r\colon W'\to\hat T_3U=T_3U$ with
\[
\rho|_{W'}=\theta_{3,M}\circ[P,r(\cdot)]=T_3(P)\circ r .
\]
Then $\xi:=T^3(P)\circ s_U\circ r\colon W'\to T^3M$ is smooth and
$\Pi_M\circ\xi=T_3(P)\circ\Pi_U\circ s_U\circ r=T_3(P)\circ r=\rho|_{W'}$. Thus every plot of $T_3M$ lifts locally along $\Pi$. Applying this to plots defined on $\RR^0$ shows that $\Pi$ is surjective.
\end{proof}

\begin{dfn}\label{dfn:tensorial}
A vertical connection $K$ is \emph{curvature-tensorial} if there is a morphism $\Rpt=\Rpt^K$ over $M$ such that
\[
\Rpt\colon T_3M\longrightarrow TM,
\qquad C_K=\Rpt\circ\Pi,
\]
and, for every $x\in M$, the restriction $\Rpt_x\colon(T_xM)^3\to T_xM$ is $\RR$-trilinear.
\end{dfn}

\begin{cor}\label{cor:tensorial-unique}
Let $K$ be a vertical connection on an elastic space $M$.
\begin{enumerate}[label=\textup{(\arabic*)}]
\item If $K$ is curvature-tensorial, then $\Rpt^K$ is uniquely determined by $C_K$, and $\Rpt_x$ is antisymmetric in its first two arguments for every $x\in M$.
\item $K$ is curvature-tensorial if and only if $C_K$ is constant on the fibres of $\Pi$ and the induced map $T_3M\to TM$ is fibrewise trilinear; smoothness of the induced map is then automatic.
\end{enumerate}
\end{cor}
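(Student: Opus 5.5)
Both parts rest on Proposition~\ref{prop:Pi-subd}: $\Pi$ is a surjective subduction. Part~(1) uses surjectivity for uniqueness and Lemma~\ref{lem:Pi}(1) for antisymmetry. Part~(2) uses the universal property of subductions.

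For uniqueness in~(1), suppose $\Rpt\circ\Pi=C_K=\Rpt'\circ\Pi$. Since $\Pi$ is surjective, $\Rpt=\Rpt'$.

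For antisymmetry, I first show that $C_K\circ\tau_{TM}=-C_K$. Because $\tau_{TM}$ is an involution,
\[
C_K\circ\tau_{TM}=K\circ T(K)\circ\tau_{TM}\circ\tau_{TM}-K\circ T(K)\circ\tau_{TM}=K\circ T(K)-K\circ T(K)\circ\tau_{TM}=-C_K .
\]
Both terms lie in the same fibre of $TM$, so the difference makes sense. By Lemma~\ref{lem:Pi}(1), $\Pi\circ\tau_{TM}=\sigma\circ\Pi$, hence
\[
\Rpt\circ\sigma\circ\Pi=\Rpt\circ\Pi\circ\tau_{TM}=C_K\circ\tau_{TM}=-C_K=-\Rpt\circ\Pi .
\]
Surjectivity of $\Pi$ then gives $\Rpt\circ\sigma=-\Rpt$. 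Restricting to the fibre over $x$ yields $\Rpt_x(w,v,z)=-\Rpt_x(v,w,z)$. The only point to check is that negation in $TM$ is the fibrewise vector space structure and commutes with the relevant compositions, which follows from \cite[Remark~2.18]{Shio}.

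For~(2), the forward direction is immediate: $C_K=\Rpt\circ\Pi$ is constant on fibres of $\Pi$, and the induced map is $\Rpt$, which is trilinear. For the converse, suppose $C_K$ is constant on the fibres of $\Pi$. Since $\Pi$ is surjective, $C_K$ descends to a unique set map $\Rpt\colon T_3M\to TM$ with $\Rpt\circ\Pi=C_K$.

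The main step is smoothness of $\Rpt$, and here the subduction property is essential. Let $\rho$ be a plot of $T_3M$. By Proposition~\ref{prop:Pi-subd}, $\rho$ lifts locally as $\rho|_{W'}=\Pi\circ\xi$ with $\xi$ a plot of $T^3M$. Then $\Rpt\circ\rho|_{W'}=C_K\circ\xi$ is a plot of $TM$. Since plots are determined locally, $\Rpt\circ\rho$ is a plot, so $\Rpt$ is smooth.

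It remains to check that $\Rpt$ lies over $M$. We have $\pi_M\circ C_K=\pi_M\circ\pi_{TM}\circ\pi_{T^2M}$, because $K$ lies over $\pi_M$ and $T(K)$ lies over $K$. This common base map is the base projection of $T_3M$ composed with $\Pi$, so $\pi_M\circ\Rpt$ agrees with the base projection after composing with $\Pi$, and surjectivity of $\Pi$ gives $\pi_M\circ\Rpt=$ the base projection. Fibrewise trilinearity is the remaining hypothesis of~(2). Hence $K$ is curvature-tensorial.
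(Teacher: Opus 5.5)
Your proposal is correct and follows the paper's proof essentially step for step: surjectivity of $\Pi$ gives uniqueness, $\tau_{TM}^2=\id$ together with Lemma~\ref{lem:Pi}(1) gives $\Rpt\circ\sigma=-\Rpt$, and in~(2) the subduction property gives smoothness while the common base map $\pi_M\circ\pi_{TM}\circ\pi_{T^2M}$ shows that $\Rpt$ lies over $M$. The only difference is that you spell out the local-lifting argument for smoothness, which the paper leaves implicit in the phrase ``since $\Pi$ is a subduction.''
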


\begin{proof}
(1) Uniqueness holds because $\Pi$ is surjective (Proposition~\ref{prop:Pi-subd}). Always $C_K\circ\tau_{TM}=-C_K$, because $\tau_{TM}^2=\id$. By Lemma~\ref{lem:Pi}(1), $\Rpt\circ\sigma\circ\Pi=\Rpt\circ\Pi\circ\tau_{TM}=C_K\circ\tau_{TM}=-\Rpt\circ\Pi$, and surjectivity of $\Pi$ gives $\Rpt\circ\sigma=-\Rpt$.

(2) If $C_K$ is constant on the fibres of $\Pi$, there is a unique map $\Rpt$ with $C_K=\Rpt\circ\Pi$. Since $\Pi$ is a subduction and $C_K$ is smooth, $\Rpt$ is smooth. It lies over $M$ because $C_K$ and $\Pi$ both lie over $\pi_M\circ\pi_{TM}\circ\pi_{T^2M}$.
\end{proof}

\begin{nota}\label{nota:R}
For fields $A,B,C$ along $f\colon N\to M$ we write $\Rpt(A,B,C):=\Rpt\circ\langle A,B,C\rangle$, a field along $f$. In the presence of a metric we put
\[
\Rfour_x(v,w,z,y):=g_x\bigl(\Rpt_x(v,w,z),y\bigr),\qquad \Rfour(A,B,C,D):=g\bigl(\Rpt(A,B,C),D\bigr)\in C^\infty(N,\RR),
\]
and for global vector fields $\Rfour(X,Y,Z,W):=g(R^K(X,Y,Z),W)$, also when $K$ is not curvature-tensorial. Lemma~\ref{lem:compat} shows that the two meanings agree, and that $R^K$, $\Rpt$ and $\Rfour$ have the same order of variables.
\end{nota}

\begin{rem}[notation]\label{rem:R-notation}
The curvature symbols play different roles. The morphism $C_K\colon T^3M\to TM$ is the raw tangent-categorical curvature. When $K$ is curvature-tensorial, its pointwise trilinear factor is
\[
\Rpt^K\colon T_3M\longrightarrow TM.
\]
The expression $R^K(X,Y,Z)$ is the curvature evaluated on global vector fields, while $\Rfour$ denotes the metric contraction. We suppress the superscript $K$ on $\Rpt$ and $\Rfour$ once the connection is fixed.
\end{rem}

\begin{lem}\label{lem:compat}
If $K$ is curvature-tensorial, then $R^K(X,Y,Z)=\Rpt(X,Y,Z)$ for all $X,Y,Z\in\XX(M)$; that is, $R^K(X,Y,Z)(x)=\Rpt_x(X(x),Y(x),Z(x))$.
\end{lem}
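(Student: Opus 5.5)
This is a direct unwinding of definitions. By the definition of $R^K$ recalled in Section~\ref{sec:prelim},
\[
R^K(X,Y,Z)=C_K\circ T^2(Z)\circ T(X)\circ Y .
\]
Curvature-tensoriality (Definition~\ref{dfn:tensorial}) lets us replace $C_K$ by $\Rpt\circ\Pi$, so
\[
R^K(X,Y,Z)=\Rpt\circ\Pi\circ T^2(Z)\circ T(X)\circ Y .
\]
The key step is to evaluate $\Pi\circ T^2(Z)\circ T(X)\circ Y$ by Lemma~\ref{lem:Pi}(2). We apply it with $N=M$, $f=\id_M$, the field $W=Z$ along $\id_M$, $A=X$ and $B=Y$. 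Since $T(\id_M)=\id_{TM}$, the lemma gives
\[
\Pi\circ T^2(Z)\circ T(X)\circ Y=\langle X,Y,Z\rangle .
\]
Composing with $\Rpt$ yields $R^K(X,Y,Z)=\Rpt\circ\langle X,Y,Z\rangle=\Rpt(X,Y,Z)$ in the sense of Notation~\ref{nota:R}. Since $\Rpt$ lies over $M$, evaluating at $x$ gives the pointwise formula $R^K(X,Y,Z)(x)=\Rpt_x(X(x),Y(x),Z(x))$.

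The only point needing care is the order of the arguments. One must check that the components of $\Pi$ come out as $(X,Y,Z)$ and not, for instance, $(Y,X,Z)$. A swap would introduce a sign, via the antisymmetry of Corollary~\ref{cor:tensorial-unique}(1), and would break the claimed agreement of variable orders. Lemma~\ref{lem:Pi}(2) settles this. Its first component, $T(\pi_M)\circ\pi_{T^2M}$, recovers the field $A=X$, which is differentiated first (inside $T(X)$). Its second component, $T(\pi_M\circ\pi_{TM})$, recovers the outer direction $B=Y$. Its third component, $\pi_{TM}\circ\pi_{T^2M}$, recovers $W=Z$. As a cross-check, the coordinate description in Lemma~\ref{lem:Pi}(3) gives the same order $(v_{\{1\}},v_{\{2\}},v_{\{0\}})$. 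No further obstacle is expected: the statement is a formal consequence of the factorization $C_K=\Rpt\circ\Pi$ together with Lemma~\ref{lem:Pi}(2).

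Finally, for the metric contraction in Notation~\ref{nota:R}, apply $g(-,W)$ to both sides. The two meanings of $\Rfour$ then agree.
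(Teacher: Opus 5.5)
Your proposal is correct and is the paper's argument exactly: substitute $C_K=\Rpt\circ\Pi$ into $R^K(X,Y,Z)=C_K\circ T^2(Z)\circ T(X)\circ Y$ and apply Lemma~\ref{lem:Pi}(2) with $f=\id_M$, $W=Z$, $A=X$, $B=Y$. Your check that the components come out in the order $(X,Y,Z)$ is accurate, and it is what justifies the paper's remark that $R^K$, $\Rpt$ and $\Rfour$ have the same order of variables.
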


\begin{proof}
Lemma~\ref{lem:Pi}(2) with $f=\id_M$, $W=Z$, $A=X$, $B=Y$.
\end{proof}

\subsection{Examples}

\begin{prop}\label{prop:tensorial}
\begin{enumerate}[label=\textup{(\arabic*)}]
\item Every vertical connection on a finite-dimensional manifold is curvature-tensorial, $\Pi$ is a surjective submersion, and $\Rpt_x(v,w,z)$ is the classical curvature $R(v,w)z$ of the covariant derivative $\nabla^K$.
\item Let $\Sigma$ be a diffeological space such that $(\Sigma,T^kN)$ is tangent-commuting for all $k\ge0$ \cite[Convention~7.6]{Shio}, and let $\Xmap=C^\infty(\Sigma,N)$ \textup{(}the source is denoted $M$ in \cite[\S7]{Shio}\textup{)}. If $K_N$ is curvature-tensorial, so is $K_\Xmap=C^\infty(\Sigma,K_N)$, with $\Rpt_\Xmap=C^\infty(\Sigma,\Rpt_N)$; that is, $\Rpt_\Xmap(v,w,z)(p)=\Rpt_N(v(p),w(p),z(p))$ for $p\in\Sigma$.
\end{enumerate}
\end{prop}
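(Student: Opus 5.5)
For (1), I would work in the coordinates of Section~\ref{sec:prelim}. On an open $U\subseteq\RR^n$, a vertical connection has the local form
\[
K\bigl(u;v_{\{0\}},v_{\{1\}},v_{\{01\}}\bigr)=\bigl(u;\,v_{\{01\}}+\Gamma_u(v_{\{1\}},v_{\{0\}})\bigr),
\]
where $\Gamma_u$ is bilinear. This follows from linearity of $K$ for both bundle structures together with axiom (f), and it is the classical description of $\nabla^K$.

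Next I would compute $K\circ T(K)$ on $(v_S)_S\in T^3U$. The result is $v_{\{012\}}$ plus the $\Gamma$-terms paired with second-order components, plus
\[
(D\Gamma)_u(v_{\{2\}})(v_{\{1\}},v_{\{0\}})+\Gamma_u\bigl(v_{\{2\}},\Gamma_u(v_{\{1\}},v_{\{0\}})\bigr).
\]
Then I subtract the same expression with directions $1$ and $2$ exchanged, since that exchange is $\tau_{TM}$. All terms involving $v_{\{012\}}$, $v_{\{12\}}$, $v_{\{01\}}$, $v_{\{02\}}$ should cancel; this is the routine bookkeeping step I expect to check carefully. What remains is the classical expression
\[
(D\Gamma)(v_{\{2\}})(v_{\{1\}},v_{\{0\}})-(D\Gamma)(v_{\{1\}})(v_{\{2\}},v_{\{0\}})+\Gamma\Gamma-\Gamma\Gamma .
\]
This depends only on $\Pi((v_S)_S)=(u;v_{\{1\}},v_{\{2\}},v_{\{0\}})$ and is trilinear in these, up to the sign and ordering convention fixed by \eqref{eq:R-nabla}. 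These local factorizations are compatible under chart changes because $\Pi$ is natural. Since $\Pi$ is locally a linear projection of vector bundles, it is a surjective submersion, and uniqueness (Corollary~\ref{cor:tensorial-unique}) gives a global $\Rpt$. To identify $\Rpt_x(v,w,z)$ with $R(v,w)z$, I would use Lemma~\ref{lem:compat} and \eqref{eq:R-nabla} with vector fields extending $v,w,z$, which exist on manifolds via bump functions.

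For (2), I would invoke the tangent-commuting hypothesis of \cite[Convention~7.6]{Shio}. It identifies $T^k\Xmap\cong C^\infty(\Sigma,T^kN)$ for $k\le3$ compatibly with $\pi$, $\tau$ and $T$. In particular $C_{K_\Xmap}=C^\infty(\Sigma,C_{K_N})$ and $\Pi_\Xmap=C^\infty(\Sigma,\Pi_N)$, since all of $C_K$, $\tau$, $T(K)$ and $\Pi$ are built from these structure maps. Also $T_3\Xmap\cong C^\infty(\Sigma,T_3N)$, because $C^\infty(\Sigma,-)$ preserves fibre products, being a right adjoint. Setting $\Rpt_\Xmap:=C^\infty(\Sigma,\Rpt_N)$, which is smooth by functoriality, we get
\[
\Rpt_\Xmap\circ\Pi_\Xmap=C^\infty(\Sigma,\Rpt_N\circ\Pi_N)=C_{K_\Xmap}.
\]
Fibrewise trilinearity holds because the vector space structure on $T_\gamma\Xmap$ is pointwise in $p\in\Sigma$. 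The pointwise formula is then immediate.

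The main obstacle I expect is verifying that the tangent-commuting identification really intertwines $\tau_{TM}$ and the lifted $K$ at the third tangent level. If the cited convention only provides the identifications objectwise, I would argue via naturality of the comparison maps in \cite[\S7]{Shio}.
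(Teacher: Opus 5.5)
Your proposal is correct and follows the paper's proof: the local form $K=K_\Gamma$, cancellation of all terms in $v_{\{01\}},v_{\{02\}},v_{\{12\}},v_{\{012\}}$ leaving a trilinear expression in the components of $\Pi$, gluing because $\Pi$ is a surjective submersion, and for (2) applying $C^\infty(\Sigma,-)$ to $C_{K_N}=\Rpt_N\circ\Pi_N$; the intertwining you flag as the main obstacle is exactly \cite[Proposition~7.9]{Shio}, i.e.\ $C_{K_\Xmap}=C^\infty(\Sigma,C_{K_N})$. There are two harmless slips: subtracting the swapped expression from $K\circ T(K)$ gives $-C_K$ rather than $C_K$, which does not matter because you identify $\Rpt_x$ with $R(v,w)z$ through Lemma~\ref{lem:compat} and \eqref{eq:R-nabla} for extended fields instead of the paper's coordinate comparison with $\nabla^{K_\Gamma}$; and the identity coefficient of $v_{\{01\}}$ in the local form needs $K\circ\lambda_M=\id$, since linearity for the two bundle structures alone only yields $\ell_u(v_{\{01\}})+\Gamma_u(v_{\{1\}},v_{\{0\}})$ with $\ell_u$ linear.
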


\begin{proof}
(1) Both $C_K$ and $\Pi$ are compatible with restriction to open subsets, so we may work in a chart $U\subseteq\RR^n$. We first show that $K=K_\Gamma$ for a smooth family $\Gamma(u)$ of bilinear maps, as in \cite[Example~3.5]{Shio}. Write $K(u,a,b,c)=(u,L_u(a,b,c))$ in the coordinates of that example. By \cite[Remark~3.2]{Shio} and axioms (e), (f), $L_u$ is linear in $(b,c)$ for fixed $a$ and linear in $(a,c)$ for fixed $b$. Moreover, $K\circ0_{TM}=0_M\circ\pi_M$ gives $L_u(a,0,0)=0$, while $K\circ\lambda_M=\id$ gives $L_u(0,0,c)=c$. Therefore
\[
L_u(a,0,c)=L_u(a,0,0)+L_u(0,0,c)=c,
\]
and hence $L_u(a,b,c)=L_u(a,b,0)+L_u(a,0,c)=\Gamma(u)(a,b)+c$, where $\Gamma(u)(a,b):=L_u(a,b,0)$ is bilinear.

Suppressing the base point $u$ and writing $D\Gamma(v)$ for the derivative of $\Gamma$ in the direction $v$, we have
$T(K_\Gamma)(v_S)_S=\bigl(v_{01}+\Gamma(v_0,v_1),\ v_2,\ v_{012}+D\Gamma(v_2)(v_0,v_1)+\Gamma(v_{02},v_1)+\Gamma(v_0,v_{12})\bigr)$, hence
\[
K_\Gamma\circ T(K_\Gamma)(v_S)_S=v_{012}+D\Gamma(v_2)(v_0,v_1)+\Gamma(v_{02},v_1)+\Gamma(v_0,v_{12})+\Gamma\bigl(v_{01}+\Gamma(v_0,v_1),v_2\bigr).
\]
Precomposing with $\tau_{TM}$ exchanges $v_1\leftrightarrow v_2$ and $v_{01}\leftrightarrow v_{02}$. In the difference all terms containing $v_{01},v_{02},v_{12},v_{012}$ cancel, and
\[
C_{K_\Gamma}(v_S)_S=D\Gamma(v_1)(v_0,v_2)-D\Gamma(v_2)(v_0,v_1)+\Gamma\bigl(\Gamma(v_0,v_2),v_1\bigr)-\Gamma\bigl(\Gamma(v_0,v_1),v_2\bigr).
\]
By Lemma~\ref{lem:Pi}(3) this is $\Rpt(v_1,v_2,v_0)$ for a morphism $\Rpt$ that is trilinear and antisymmetric in its first two arguments; comparing with $\nabla^{K_\Gamma}_XY=d\bar Y(\bar X)+\Gamma(\bar Y,\bar X)$ \cite[Example~3.11]{Shio} shows that $\Rpt(v_1,v_2,v_0)$ is the classical $R(v_1,v_2)v_0$. Since $\Pi$ is a surjective submersion, the local morphisms $\Rpt$ are determined by $C_K$ and glue.

(2) By \cite[Proposition~7.9]{Shio}, $C_{K_\Xmap}=C^\infty(\Sigma,C_{K_N})$. Since $\Pi$ is built from structure maps and $C^\infty(\Sigma,-)$ preserves fibre products, $\Pi_\Xmap=C^\infty(\Sigma,\Pi_N)$ under the identifications of \cite[Convention~7.6]{Shio}. Apply $C^\infty(\Sigma,-)$ to $C_{K_N}=\Rpt_N\circ\Pi_N$; trilinearity and antisymmetry hold pointwise in $\Sigma$.
\end{proof}

\section{Covariant derivatives along plots}\label{sec:commute}

The following identity is the analogue of \eqref{eq:R-nabla} for covariant derivatives along a plot. It contains no bracket term, and it evaluates the curvature morphism at tangent vectors that need not extend to global vector fields.

\begin{thm}\label{thm:commute}
Let $K$ be a vertical connection on $M$, let $P\colon U\subseteq\RR^n\to M$ be a plot, let $W$ be a field along $P$ and let $1\le i,j\le n$. Then
\[
\nabla_i\nabla_jW-\nabla_j\nabla_iW=C_K\circ T^2(W)\circ T(\partial_i)\circ\partial_j .
\]
If $K$ is curvature-tensorial, the right-hand side equals $\Rpt(\partial_iP,\partial_jP,W)$.
\end{thm}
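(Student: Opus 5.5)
Expand both iterated covariant derivatives directly from the definition $\nabla_AV=K\circ T(V)\circ A$ and compare with the definition of $C_K$. First, $\nabla_jW=K\circ T(W)\circ\partial_j$ is a field along $P$. Applying $\nabla_i$ gives
\[
\nabla_i\nabla_jW=K\circ T\bigl(K\circ T(W)\circ\partial_j\bigr)\circ\partial_i
=K\circ T(K)\circ T^2(W)\circ T(\partial_j)\circ\partial_i,
\]
using only functoriality of $T$. Symmetrically, $\nabla_j\nabla_iW=K\circ T(K)\circ T^2(W)\circ T(\partial_i)\circ\partial_j$.

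The key step is to rewrite the first expression with the arguments in the order $T(\partial_i)\circ\partial_j$. By \eqref{eq:flip-coord}, $T(\partial_j)\circ\partial_i=\tau_U\circ T(\partial_i)\circ\partial_j$. By naturality of the canonical flip $\tau$, applied to the morphism $W\colon U\to TM$, we have $T^2(W)\circ\tau_U=\tau_{TM}\circ T^2(W)$. Combining these,
\[
\nabla_i\nabla_jW=K\circ T(K)\circ\tau_{TM}\circ T^2(W)\circ T(\partial_i)\circ\partial_j .
\]
Subtracting $\nabla_j\nabla_iW$ then gives
\[
\bigl(K\circ T(K)\circ\tau_{TM}-K\circ T(K)\bigr)\circ T^2(W)\circ T(\partial_i)\circ\partial_j,
\]
which is $C_K\circ T^2(W)\circ T(\partial_i)\circ\partial_j$ by definition of $C_K$.

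One must check that this subtraction is legitimate. The difference is taken in the fibres of $\pi_M$, and composition on the right distributes over pointwise differences of morphisms into $TM$ lying over the same map into $M$. Both terms lie over $P$, since $K$ lies over $\pi_M$ and $\pi_M\circ\nabla_AV=P$. The only place needing care is the naturality of $\tau$ with respect to $W$, where $W$ is a map into $TM$ rather than $M$. This is just naturality of $\tau$ as a transformation $T^2\Rightarrow T^2$ evaluated at the morphism $W\colon U\to TM$, so that $\tau$ appears at the object $TM$. I expect this bookkeeping to be the main point to get right.

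For the curvature-tensorial case, write $C_K=\Rpt\circ\Pi$ and apply Lemma~\ref{lem:Pi}(2) with $f=P$, $A=\partial_i$, $B=\partial_j$. This gives
\[
\Pi\circ T^2(W)\circ T(\partial_i)\circ\partial_j=\langle\partial_iP,\partial_jP,W\rangle,
\]
hence the right-hand side equals $\Rpt(\partial_iP,\partial_jP,W)$.
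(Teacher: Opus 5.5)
Your proposal is correct and follows the paper's proof step for step. You expand both iterated derivatives by functoriality of $T$, move $T(\partial_j)\circ\partial_i$ to $T(\partial_i)\circ\partial_j$ via \eqref{eq:flip-coord} and naturality of $\tau$ at $W\colon U\to TM$, subtract, and then invoke Lemma~\ref{lem:Pi}(2) with $f=P$, $A=\partial_i$, $B=\partial_j$ for the curvature-tensorial case.
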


\begin{proof}
By functoriality of $T$,
$\nabla_i\nabla_jW=K\circ T\bigl(K\circ T(W)\circ\partial_j\bigr)\circ\partial_i=K\circ T(K)\circ T^2(W)\circ T(\partial_j)\circ\partial_i$.
By \eqref{eq:flip-coord} and naturality of $\tau$ for the morphism $W\colon U\to TM$, i.e.\ $T^2(W)\circ\tau_U=\tau_{TM}\circ T^2(W)$,
\[
\nabla_i\nabla_jW=K\circ T(K)\circ\tau_{TM}\circ T^2(W)\circ T(\partial_i)\circ\partial_j,\qquad
\nabla_j\nabla_iW=K\circ T(K)\circ T^2(W)\circ T(\partial_i)\circ\partial_j .
\]
Subtracting gives the first assertion. The second follows from Lemma~\ref{lem:Pi}(2) with $f=P$, $A=\partial_i$, $B=\partial_j$.
\end{proof}

\begin{rem}
Theorem~\ref{thm:commute} explains the shape of \cite[Definition~5.1]{Shio}: $K\circ T(K)$ is a second covariant derivative, and $\tau_{TM}$ exchanges the two directions of differentiation. Thus $C_K$ is the commutator of covariant derivatives by construction, and \eqref{eq:R-nabla} is the case in which the two directions come from global vector fields, whose bracket need not vanish.
\end{rem}

\begin{cor}\label{cor:plot-sym}
Let $K$ be curvature-tensorial and let $P\colon U\subseteq\RR^n\to M$ be a plot.
\begin{enumerate}[label=\textup{(\arabic*)}]
\item If $K$ is torsion-free, then for all $i,j,k$,
\[
\Rpt(\partial_iP,\partial_jP,\partial_kP)+\Rpt(\partial_jP,\partial_kP,\partial_iP)+\Rpt(\partial_kP,\partial_iP,\partial_jP)=0_M\circ P .
\]
\item If $K$ is a Levi-Civita connection for $g$, then for all $i,j$ and all fields $W,W'$ along $P$,
\[
\Rfour(\partial_iP,\partial_jP,W,W')=-\Rfour(\partial_iP,\partial_jP,W',W).
\]
\end{enumerate}
\end{cor}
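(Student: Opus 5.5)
For (1) I will use the Jacobi identity for covariant derivatives of coordinate fields along $P$, obtained from Theorem~\ref{thm:commute} together with Lemma~\ref{lem:symmetry}. First apply Theorem~\ref{thm:commute} with $W=\partial_kP$:
\[
\Rpt(\partial_iP,\partial_jP,\partial_kP)=\nabla_i\nabla_j\partial_kP-\nabla_j\nabla_i\partial_kP .
\]
Then cyclically permute $(i,j,k)$ and add the three identities; the sum consists of six second covariant derivatives. By Lemma~\ref{lem:symmetry}, $\nabla_j\partial_kP=\nabla_k\partial_jP$ as fields along $P$. Applying $\nabla_i$ (well defined on fields along $P$) gives $\nabla_i\nabla_j\partial_kP=\nabla_i\nabla_k\partial_jP$, and similarly for the other pairs. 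After these substitutions the six terms cancel in pairs. For example, $\nabla_i\nabla_j\partial_kP$ cancels against $-\nabla_i\nabla_k\partial_jP$, where the latter is the negative term of the cyclic shift $(k,i,j)$. Additivity from Lemma~\ref{lem:along}(1) then gives $0_M\circ P$. The only point to check is that the bookkeeping of the cyclic sum really pairs every term, which is routine.

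For (2) I use metric compatibility \eqref{eq:metric-along} along $P$ with the coordinate vector fields $\partial_i,\partial_j$ on $U$. Apply it twice:
\[
\partial_i\partial_j\,g(W,W')=g(\nabla_i\nabla_jW,W')+g(\nabla_jW,\nabla_iW')+g(\nabla_iW,\nabla_jW')+g(W,\nabla_i\nabla_jW').
\]
Next subtract the same expression with $i,j$ exchanged. The left side vanishes because the mixed partials of the smooth function $g(W,W')$ on the domain $U$ commute. The two middle terms are symmetric in $(i,j)$ and cancel. What remains is
\[
g(\nabla_i\nabla_jW-\nabla_j\nabla_iW,W')+g(W,\nabla_i\nabla_jW'-\nabla_j\nabla_iW')=0 .
\]
By Theorem~\ref{thm:commute} this is $\Rfour(\partial_iP,\partial_jP,W,W')+\Rfour(\partial_iP,\partial_jP,W',W)=0$, using the symmetry of $g$. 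Bilinearity of $g$ over subtraction of fields lying over the same map is needed here, and holds fibrewise.

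I expect no serious obstacle. The one subtle point is in (2): $g(W,W')$ must be a genuine smooth real function on $U$, and the derivations $\partial_i$ acting on it must coincide with the classical partial derivatives, so that they commute. This holds since $U$ is a Euclidean domain.
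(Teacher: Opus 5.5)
Your proposal is correct and follows the paper's proof. Part (1) is the same regrouping of the cyclic sum into $\nabla_i(\nabla_j\partial_kP-\nabla_k\partial_jP)+\nabla_j(\nabla_k\partial_iP-\nabla_i\partial_kP)+\nabla_k(\nabla_i\partial_jP-\nabla_j\partial_iP)$ via Theorem~\ref{thm:commute} and Lemma~\ref{lem:symmetry}. Part (2) is the same metric-compatibility computation, except that you run it directly on the pair $(W,W')$ instead of first reducing by polarization to $W=W'$ as the paper does; this is an equally valid and slightly more direct variant.
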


\begin{proof}
(1) By Theorem~\ref{thm:commute} and Lemma~\ref{lem:along}(1), the left-hand side equals
\[
\nabla_i(\nabla_j\partial_kP-\nabla_k\partial_jP)+\nabla_j(\nabla_k\partial_iP-\nabla_i\partial_kP)+\nabla_k(\nabla_i\partial_jP-\nabla_j\partial_iP),
\]
which vanishes by Lemma~\ref{lem:symmetry} and Lemma~\ref{lem:along}(1).

(2) Since $\Rpt$ is linear in its third argument and $g$ is bilinear, it suffices by polarization to show $\Rfour(\partial_iP,\partial_jP,W,W)=0$. By \eqref{eq:metric-along} applied twice,
\[
\partial_i\partial_j\,g(W,W)=2g(\nabla_i\nabla_jW,W)+2g(\nabla_jW,\nabla_iW).
\]
Exchanging $i$ and $j$ and subtracting, the left-hand side vanishes because partial derivatives of smooth functions on $U$ commute, and the last term is symmetric. Hence $g(\nabla_i\nabla_jW-\nabla_j\nabla_iW,W)=0$, which is the claim by Theorem~\ref{thm:commute}.
\end{proof}

\section{Symmetries of the curvature}\label{sec:sym}

\subsection{For global vector fields}

\begin{lem}\label{lem:octahedron}
Let $A$ be an abelian group, $B$ a real vector space, and $F\colon A^4\to B$ additive in each variable, satisfying for all $a,b,c,d\in A$
\begin{enumerate}[label=\textup{(\alph*)}]
\item $F(a,b,c,d)=-F(b,a,c,d)$;
\item $F(a,b,c,d)+F(b,c,a,d)+F(c,a,b,d)=0$;
\item $F(a,b,c,d)=-F(a,b,d,c)$.
\end{enumerate}
Then \textup{(d)} $F(a,b,c,d)=F(c,d,a,b)$.
\end{lem}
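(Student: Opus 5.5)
The plan is to run the classical octahedron argument. It uses only additivity in each variable, the three identities (a)--(c), and the fact that $B$ is a vector space, so that we may divide by $2$ at the end. First we derive the auxiliary symmetry
\[
F(a,b,c,d)=F(b,a,d,c),
\]
which follows from (a) followed by (c). We then write out the first Bianchi identity (b) four times, in each case with the last slot occupied by a different one of the four letters:
\begin{align*}
&F(a,b,c,d)+F(b,c,a,d)+F(c,a,b,d)=0,\\
&F(b,c,d,a)+F(c,d,b,a)+F(d,b,c,a)=0,\\
&F(c,d,a,b)+F(d,a,c,b)+F(a,c,d,b)=0,\\
&F(d,a,b,c)+F(a,b,d,c)+F(b,d,a,c)=0.
\end{align*}

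Next we add the four equations. Using (a) and (c), every term can be brought to a normal form in which the pair $\{a,b\}$ or $\{c,d\}$ appears in a fixed order. Several terms then cancel in pairs. For example, $F(b,c,a,d)$ from the first line and $F(d,a,c,b)$ from the third should cancel, once one checks $F(d,a,c,b)=F(a,d,b,c)$ together with the swap symmetry above. The terms that survive are $2F(c,a,b,d)$ and $2F(d,b,c,a)$, and together they yield
\[
F(c,a,b,d)=F(b,d,c,a),
\]
possibly up to a renaming of letters. Because each of (a)--(c) holds for arbitrary $a,b,c,d\in A$, we may then rename $(c,a,b,d)\mapsto(a,b,c,d)$, and this gives (d). Dividing by $2$ is legitimate because $B$ is a real vector space; this is the only place where the hypothesis on $B$ enters, since $A$ is merely an abelian group.

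I expect the main obstacle to be the sign bookkeeping, that is, choosing the pairing of the twelve terms so that the cancellations genuinely occur with the correct signs. I would do the bookkeeping explicitly by computing a normal form for each of the twelve terms under the group generated by (a) and (c). To guard against sign slips, I would cross-check the result against the standard proof in do~Carmo or Milnor's Morse theory.

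Additivity in each variable is not actually needed for the identity itself. It is part of the setting only because the lemma is later applied to $\Rfour$ on $\XX(M)$, where it will supply, from (a)--(c), the pair symmetry (d).
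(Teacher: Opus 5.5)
Your strategy is the paper's: the same four instances of (b) are summed, the survivors give $2F(c,a,b,d)+2F(b,d,a,c)=0$, and renaming yields (d). Your $F(d,b,c,a)$ equals $F(b,d,a,c)$ by (a) and (c), so your surviving terms and the identity $F(c,a,b,d)=F(b,d,c,a)$ are correct. However, the one cancellation you actually write down is false: $F(b,c,a,d)$ and $F(d,a,c,b)$ do not cancel. Your step $F(d,a,c,b)=F(a,d,b,c)$ is right. But cancellation would need $F(b,c,a,d)=-F(a,d,b,c)$, whereas (d), the statement being proved, gives $F(b,c,a,d)=F(a,d,b,c)$. Concretely, put $c=a$ and $d=b$: both terms become $F(b,a,a,b)$. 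Take $F(x,y,z,w)=\langle y,z\rangle\langle x,w\rangle-\langle x,z\rangle\langle y,w\rangle$ on $\RR^2$, which satisfies (a)--(c). Then $F(e_2,e_1,e_1,e_2)=1$, so the two terms sum to $2$. The sign bookkeeping is the entire content of this lemma, so it has to be carried out rather than deferred to a textbook check.

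The pairing that works uses (c) alone and matches terms with the same first two arguments:
\begin{itemize}
\item $F(a,b,c,d)$ (line 1) with $F(a,b,d,c)$ (line 4);
\item $F(b,c,a,d)$ (line 1) with $F(b,c,d,a)$ (line 2);
\item $F(c,d,b,a)$ (line 2) with $F(c,d,a,b)$ (line 3);
\item $F(d,a,c,b)$ (line 3) with $F(d,a,b,c)$ (line 4).
\end{itemize}
The four remaining terms are $F(c,a,b,d)$, $F(d,b,c,a)$, $F(a,c,d,b)$ and $F(b,d,a,c)$. They combine via $F(a,c,d,b)=F(c,a,b,d)$ and $F(b,d,a,c)=F(d,b,c,a)$, each obtained from (a) followed by (c), to give $2F(c,a,b,d)+2F(d,b,c,a)=0$. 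Dividing by $2$ in $B$ and applying (a) then gives $F(c,a,b,d)=F(b,d,c,a)$. With this correction the argument is complete. Your remark that additivity is never used is also correct; the paper's argument does not use it either.
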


\begin{proof}
This is the classical argument, which is purely algebraic (cf.\ \cite[\S9]{Mil}). Write (b) for the four quadruples $(a,b,c,d)$, $(b,c,d,a)$, $(c,d,a,b)$, $(d,a,b,c)$ and add the four identities. By (a) and (c), eight of the twelve terms cancel in pairs, and the remaining four give
\[
2F(c,a,b,d)+2F(b,d,a,c)=0 .
\]
By (c) this says $F(c,a,b,d)=F(b,d,c,a)$, and renaming the variables $(c,a,b,d)\mapsto(a,b,c,d)$ gives (d).
\end{proof}

\begin{thm}\label{thm:sym-global}
Let $K$ be a Levi-Civita connection for $g$. For all $X,Y,Z,W\in\XX(M)$:
\begin{enumerate}[label=\textup{(\alph*)}]
\item $\Rfour(X,Y,Z,W)=-\Rfour(Y,X,Z,W)$;
\item $R^K(X,Y,Z)+R^K(Y,Z,X)+R^K(Z,X,Y)=0$;
\item $\Rfour(X,Y,Z,W)=-\Rfour(X,Y,W,Z)$;
\item $\Rfour(X,Y,Z,W)=\Rfour(Z,W,X,Y)$.
\end{enumerate}
Here \textup{(b)} uses only torsion-freeness and \textup{(c)} only metric compatibility.
\end{thm}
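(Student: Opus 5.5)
Each symmetry follows from the formula \eqref{eq:R-nabla} for $R^K$ on global vector fields together with the properties of the bracket and of covariant derivatives from \cite{Shio}. No pointwise argument is needed, since every statement is an identity in $\XX(M)$ or in $C^\infty(M,\RR)$. We take (a)--(c) in turn and then obtain (d) from Lemma~\ref{lem:octahedron}.

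For (a), we use Proposition~\ref{prop:trilinear}: $R^K(X,Y,Z)=-R^K(Y,X,Z)$. Pairing with $W$ via $g$ gives the claim.

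For (b), we expand the cyclic sum with \eqref{eq:R-nabla}. We regroup the six second-order terms as $\nabla_X(\nabla_YZ-\nabla_ZY)+\nabla_Y(\nabla_ZX-\nabla_XZ)+\nabla_Z(\nabla_XY-\nabla_YX)$. Torsion-freeness in the form $\nabla_XY-\nabla_YX=[X,Y]$ \cite{Shio} turns this into $\nabla_X[Y,Z]+\nabla_Y[Z,X]+\nabla_Z[X,Y]$. The bracket terms $-\nabla_{[X,Y]}Z-\nabla_{[Y,Z]}X-\nabla_{[Z,X]}Y$ combine with these, again by torsion-freeness, to give $[X,[Y,Z]]+[Y,[Z,X]]+[Z,[X,Y]]$. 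This vanishes by the Jacobi identity for the bracket, which we take from \cite{Shio}; I expect it is available there alongside Propositions~2.31--2.32, since the bracket is defined tangent-categorically. If it is not stated explicitly, this is the one point where an extra citation or argument is needed, and I would derive it from the action of $[X,Y]$ on functions only if functions separate enough. A safer fallback is the tangent-category Jacobi identity of Cockett--Cruttwell, which holds in any tangent category.

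For (c), by additivity (Proposition~\ref{prop:trilinear}) and polarization it suffices to show $g(R^K(X,Y,Z),Z)=0$. Metric compatibility \cite[Proposition~6.26]{Shio} with $f=\id_M$ gives $Y\,g(Z,Z)=2g(\nabla_YZ,Z)$. Differentiating again,
\[
XY\,g(Z,Z)=2g(\nabla_X\nabla_YZ,Z)+2g(\nabla_YZ,\nabla_XZ).
\]
We antisymmetrize in $X,Y$, use $[X,Y]\varphi=X(Y\varphi)-Y(X\varphi)$ \cite[Proposition~2.32]{Shio} on $\varphi=g(Z,Z)$, and use $[X,Y]g(Z,Z)=2g(\nabla_{[X,Y]}Z,Z)$. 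This yields $2g(R^K(X,Y,Z),Z)=0$.

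Finally, for (d), we apply Lemma~\ref{lem:octahedron} with $A=\XX(M)$ as an abelian group, $B=C^\infty(M,\RR)$ and $F=\Rfour$. The function $F$ is additive in each variable by Proposition~\ref{prop:trilinear} and bilinearity of $g$, and hypotheses (a)--(c) of the lemma are exactly the statements just proved, with (b) paired with $W$. The main obstacle is the Jacobi identity in (b); everything else is routine.
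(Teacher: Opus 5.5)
Your proposal is correct and follows the paper's proof essentially step for step. The paper also gets (a) from Proposition~\ref{prop:trilinear}, (b) by regrouping and applying torsion-freeness twice to reduce to the Jacobi identity, (c) by polarization and differentiating $g(Z,Z)$ twice, and (d) from Lemma~\ref{lem:octahedron} with $A=\XX(M)$ and $B=C^\infty(M,\RR)$. Your worry in (b) is unfounded: the Jacobi identity is part of \cite[Proposition~2.31]{Shio}, which is exactly what the paper cites, so no fallback is needed.
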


\begin{proof}
(a) is Proposition~\ref{prop:trilinear}.

(b) By \eqref{eq:R-nabla} and additivity of $\nabla$ in the second variable, the cyclic sum equals
\[
\nabla_X(\nabla_YZ-\nabla_ZY)+\nabla_Y(\nabla_ZX-\nabla_XZ)+\nabla_Z(\nabla_XY-\nabla_YX)
-\nabla_{[X,Y]}Z-\nabla_{[Y,Z]}X-\nabla_{[Z,X]}Y .
\]
By \cite[Proposition~3.9]{Shio} the first three terms are $\nabla_X[Y,Z]+\nabla_Y[Z,X]+\nabla_Z[X,Y]$, and applying \cite[Proposition~3.9]{Shio} once more the whole expression becomes $[X,[Y,Z]]+[Y,[Z,X]]+[Z,[X,Y]]$, which vanishes by the Jacobi identity \cite[Proposition~2.31]{Shio}. Identity (b) is also a special case of the Bianchi identities established for torsion-free connections in an arbitrary tangent category with negatives \cite[Theorem~3.34]{CC17}; we include the short derivation for completeness.

(c) By polarization it suffices to show $g(R^K(X,Y,Z),Z)=0$. By \cite[Proposition~4.12]{Shio} applied twice,
$X\bigl(Y\,g(Z,Z)\bigr)=2g(\nabla_X\nabla_YZ,Z)+2g(\nabla_YZ,\nabla_XZ)$. Antisymmetrizing in $(X,Y)$ and using \cite[Proposition~2.32]{Shio},
$2g(\nabla_X\nabla_YZ-\nabla_Y\nabla_XZ,Z)=[X,Y]\,g(Z,Z)=2g(\nabla_{[X,Y]}Z,Z)$, and \eqref{eq:R-nabla} gives the claim.

(d) follows from Lemma~\ref{lem:octahedron} with $A=\XX(M)$ and $B=C^\infty(M,\RR)$.
\end{proof}

\begin{rem}
The proof of (c) uses neither definiteness nor non-degeneracy of $g$, and applies verbatim to a weak metric.
\end{rem}

\subsection{Pointwise}

By Remark~\ref{rem:localize}, Theorem~\ref{thm:sym-global} says nothing a priori about $\Rpt_x$. We name the pointwise statement.

\begin{dfn}\label{dfn:S}
A curvature-tensorial Levi-Civita connection $K$ for $g$ has \emph{pointwise curvature symmetries} if, for every $x\in M$, the quadrilinear form $\Rfour_x$ on $T_xM$ satisfies the identities (a)--(d) of Theorem~\ref{thm:sym-global}, with (b) read as $\Rpt_x(v,w,z)+\Rpt_x(w,z,v)+\Rpt_x(z,v,w)=0$.
\end{dfn}

\begin{thm}\label{thm:S}
Every curvature-tensorial Levi-Civita connection $K$ for $g$ on an elastic Riemannian diffeological space $(M,g)$ has pointwise curvature symmetries.
\end{thm}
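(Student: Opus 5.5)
The plan is to reduce each of the four identities at a point $x$ to the plotwise identities of Corollary~\ref{cor:plot-sym}, using joint realization (Lemma~\ref{lem:joint}) to produce a plot whose coordinate fields and auxiliary fields take the prescribed values at the base point. Since $\Rfour_x$ is quadrilinear by Definition~\ref{dfn:tensorial} and the definiteness of $g$ is not needed, each identity can be checked on arbitrary given vectors $v,w,z,y\in T_xM$. Once (a)--(c) are established pointwise, (d) follows by applying Lemma~\ref{lem:octahedron} with $A=T_xM$, $B=\RR$ and $F=\Rfour_x$. That lemma is purely algebraic and needs only additivity together with (a)--(c), so no further geometric input is required.

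Identity (a) is already contained in Corollary~\ref{cor:tensorial-unique}(1): $\Rpt_x$ is antisymmetric in its first two arguments, and pairing with $y$ via $g_x$ gives (a). For (b), apply Lemma~\ref{lem:joint} with $n=3$, $m=0$ to the vectors $v,w,z$. This gives a plot $Q\colon B\subseteq\RR^3\to M$ with $Q(0)=x$ and $\partial_1Q(0)=v$, $\partial_2Q(0)=w$, $\partial_3Q(0)=z$. A Levi-Civita connection is torsion-free, so Corollary~\ref{cor:plot-sym}(1) gives the cyclic identity for $\Rpt(\partial_iQ,\partial_jQ,\partial_kQ)$ as fields along $Q$. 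Evaluating at $0$ and using $\Rpt(A,B,C)(0)=\Rpt_x(A(0),B(0),C(0))$ (Notation~\ref{nota:R}) yields $\Rpt_x(v,w,z)+\Rpt_x(w,z,v)+\Rpt_x(z,v,w)=0$.

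For (c), apply Lemma~\ref{lem:joint} with $n=2$, $m=2$ to $v,w$ and $z,y$. This gives a plot $Q$ on $B\subseteq\RR^2$ and fields $Z_1,Z_2$ along $Q$ with $\partial_1Q(0)=v$, $\partial_2Q(0)=w$, $Z_1(0)=z$ and $Z_2(0)=y$. Corollary~\ref{cor:plot-sym}(2) with $W=Z_1$, $W'=Z_2$, evaluated at $0$, gives $\Rfour_x(v,w,z,y)=-\Rfour_x(v,w,y,z)$. Here we use that $\Rfour(A,B,C,D)(0)=g_x(\Rpt_x(A(0),B(0),C(0)),D(0))$, which is immediate from the definition as a composite with $\langle A,B,C\rangle$.

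The cases $v=0$ or coincident vectors need no special treatment, because Lemma~\ref{lem:joint} allows arbitrary families. The only real obstacle is already dealt with upstream, in the proof of Corollary~\ref{cor:plot-sym}(2). That proof needs $\nabla_i\nabla_j$ of $g(W,W)$ together with the commutation formula of Theorem~\ref{thm:commute}, and it works precisely because plots have coordinate fields with vanishing bracket, so no extension of tangent vectors to global vector fields is needed. What remains to check is the bookkeeping: evaluation at $0$ commutes with $\Rpt$ and $g$, and the order of the variables matches Definition~\ref{dfn:S}.
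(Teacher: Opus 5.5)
Your proposal is correct and matches the paper's proof step for step. You obtain (a) from Corollary~\ref{cor:tensorial-unique}(1), and (b) and (c) by joint realization (Lemma~\ref{lem:joint}) followed by evaluating Corollary~\ref{cor:plot-sym}(1), resp.\ (2), at the base point; (d) comes from Lemma~\ref{lem:octahedron} with $A=T_xM$ and $B=\RR$, exactly as in the paper.
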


\begin{proof}
Fix $x\in M$ and $v,w,z,y\in T_xM$. Identity (a) holds by Corollary~\ref{cor:tensorial-unique}(1). For (c), take by Lemma~\ref{lem:joint} a plot $Q$ with $\partial_1Q(0)=v$, $\partial_2Q(0)=w$ and fields $Z,Y$ along $Q$ with $Z(0)=z$, $Y(0)=y$; Corollary~\ref{cor:plot-sym}(2) at $0$ gives $\Rfour_x(v,w,z,y)=-\Rfour_x(v,w,y,z)$. For (b), take by Lemma~\ref{lem:joint} a plot $Q$ with coordinate vectors $v,w,z$ at $0$ and apply Corollary~\ref{cor:plot-sym}(1) at $0$. Identity (d) follows from Lemma~\ref{lem:octahedron} with $A=T_xM$ and $B=\RR$.
\end{proof}

\begin{rem}\label{rem:S-examples}
By Proposition~\ref{prop:tensorial}, Theorem~\ref{thm:S} applies to every Levi-Civita connection on a finite-dimensional manifold, where it recovers the classical symmetries, and to the connection $K_\Xmap$ on a mapping space $\Xmap=C^\infty(\Sigma,N)$ with the metric $g_{\mathit{map}}$ of \cite[Definition~7.11]{Shio}, whenever $\Sigma$ is a closed manifold with a volume form, $(\Sigma,T^kN)$ is tangent-commuting for all $k\ge0$, and $K_N$ is a curvature-tensorial Levi-Civita connection for $g_N$; indeed $K_\Xmap$ is a Levi-Civita connection for $g_{\mathit{map}}$ by \cite[Proposition~7.13]{Shio}. In the latter case one also sees the symmetries directly from
\[
\Rfour_{\Xmap,f}(v,w,z,y)=\int_\Sigma\Rfour_{N,f(p)}\bigl(v(p),w(p),z(p),y(p)\bigr)\,\mathrm{vol}_\Sigma(p),
\]
since (a), (c), (d) hold for the integrand at each $p$, while (b) holds at each $p$ for
\[
\Rpt_\Xmap(v,w,z)(p)=\Rpt_N\bigl(v(p),w(p),z(p)\bigr)
\]
without integration.
\end{rem}

\section{Sectional curvature, the curvature form and Ricci curvature}\label{sec:sec}

\subsection{Sectional curvature and the curvature form}

\begin{lem}\label{lem:prehilbert}
For every $x\in M$, $(T_xM,g_x)$ is a real pre-Hilbert (inner product) space. The formula
\[
\langle v\wedge w,\,z\wedge y\rangle_{\Lambda^2}:=g_x(v,z)\,g_x(w,y)-g_x(v,y)\,g_x(w,z)
\]
extends uniquely to an inner product on the algebraic exterior square $\Lambda^2T_xM$. In particular,
\[
\|v\wedge w\|_{\Lambda^2}^2=\|v\|^2\|w\|^2-g_x(v,w)^2,
\]
which vanishes if and only if $v,w$ are linearly dependent.
\end{lem}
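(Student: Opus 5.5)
The statement has three parts: $g_x$ is an inner product on $T_xM$, the formula defines an inner product on $\Lambda^2T_xM$, and the norm identity together with the dependence criterion. Each part reduces to finite-dimensional linear algebra.

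For the first part, I would recall that a Riemannian metric in the sense of \cite[Definition~4.1]{Shio} restricts on each fibre $T_xM$ to a symmetric bilinear form. The fibre is a real vector space by \cite[Remark~2.18]{Shio}, and the form is definite by our conventions. Hence $(T_xM,g_x)$ is a real inner product space, with no completeness claimed.

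For the second part, I would use the universal property of $\Lambda^2$. The map $(v,w,z,y)\mapsto g_x(v,z)g_x(w,y)-g_x(v,y)g_x(w,z)$ is quadrilinear. It is antisymmetric in $(v,w)$ and in $(z,y)$. So it descends uniquely to a bilinear form on $\Lambda^2T_xM$. Uniqueness is clear because decomposable elements span $\Lambda^2T_xM$. Symmetry of the form follows from symmetry of $g_x$.

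The only real point is positive definiteness on arbitrary, non-decomposable elements. I would handle it by reducing to finite dimensions. Any $\omega\in\Lambda^2T_xM$ is a finite sum $\sum_k v_k\wedge w_k$. Let $F$ be the finite-dimensional span of the $v_k,w_k$. The inclusion $F\hookrightarrow T_xM$ induces an injection $\Lambda^2F\to\Lambda^2T_xM$, since $F$ is a direct summand algebraically. On $F$, Gram–Schmidt gives a $g_x$-orthonormal basis $e_1,\dots,e_r$. In that basis, the elements $e_a\wedge e_b$ with $a<b$ form an orthonormal basis of $\Lambda^2F$ for the induced form. Therefore $\omega=\sum_{a<b}c_{ab}\,e_a\wedge e_b$ has $\langle\omega,\omega\rangle=\sum c_{ab}^2$. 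This vanishes only if $\omega=0$ in $\Lambda^2F$, hence in $\Lambda^2T_xM$.

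For the third part, the norm formula is the definition evaluated at $(z,y)=(v,w)$. The vanishing criterion then follows from two facts. First, by definiteness, $\|v\wedge w\|=0$ if and only if $v\wedge w=0$. Second, $v\wedge w=0$ exactly when $v,w$ are linearly dependent; this is a standard fact, checked inside the span of $v,w$. Alternatively, one can use the equality case of Cauchy–Schwarz in the pre-Hilbert space.
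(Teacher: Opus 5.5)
Your proposal is correct and takes essentially the same route as the paper. You descend the alternating quadrilinear expression to a symmetric bilinear form on $\Lambda^2T_xM$, prove positive definiteness on a finite-dimensional subspace $F$ with a $g_x$-orthonormal basis in which the $e_a\wedge e_b$ $(a<b)$ are orthonormal, and get the vanishing criterion from Cauchy--Schwarz; your remark that $\Lambda^2F\to\Lambda^2T_xM$ is injective (since $F$ is a direct summand) justifies the identification the paper uses implicitly when it writes $\alpha\in\Lambda^2F$.
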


\begin{proof}
The fibre $T_xM$ is a real vector space \cite[Remark~2.18]{Shio}, and $g_x$ is an inner product by \cite[Definition~4.1]{Shio}. The displayed formula is bilinear and alternating in each pair, hence descends to a symmetric bilinear form on $\Lambda^2T_xM$. Every $\alpha\in\Lambda^2T_xM$ belongs to $\Lambda^2F$ for some finite-dimensional subspace $F\subseteq T_xM$. Choosing a $g_x$-orthonormal basis of $F$ identifies the displayed form on $\Lambda^2F$ with the standard Euclidean inner product on the basis $e_i\wedge e_j$ $(i<j)$. Thus it is positive definite on the algebraic exterior square. The final assertion is the equality case of Cauchy--Schwarz.
\end{proof}

\begin{dfn}\label{dfn:curvform}
Let $K$ be a curvature-tensorial Levi-Civita connection for $g$; by Theorem~\ref{thm:S} it has pointwise curvature symmetries. On decomposable elements of the algebraic exterior square set
\[
\Qop_x(v\wedge w,\,z\wedge y):=\Rfour_x(v,w,y,z).
\]
Proposition~\ref{prop:curvform} below shows that this prescription extends to a well-defined symmetric bilinear form on $\Lambda^2T_xM$, which we call the \emph{curvature form}.
For a plane $\sigma\subseteq T_xM$ with basis $(v,w)$, the \emph{sectional curvature} is
\[
\sect_x(\sigma)=\sect_x(v,w):=\frac{\Qop_x(v\wedge w,v\wedge w)}{\|v\wedge w\|_{\Lambda^2}^2}=\frac{\Rfour_x(v,w,w,v)}{\|v\wedge w\|_{\Lambda^2}^2}.
\]
\end{dfn}

\begin{prop}\label{prop:curvform}
$\Qop_x$ is well defined and symmetric, and $\sect_x(\sigma)$ depends only on $\sigma$.
\end{prop}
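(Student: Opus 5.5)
The plan is to build $\Qop_x$ from a multilinear form on $(T_xM)^4$ and let the universal property of the algebraic exterior square do the descent. Define $F(v,w,z,y):=\Rfour_x(v,w,y,z)$. Since $\Rpt_x$ is $\RR$-trilinear (Definition~\ref{dfn:tensorial}) and $g_x$ is bilinear, $F$ is $\RR$-quadrilinear on $(T_xM)^4$. By Theorem~\ref{thm:S}, identity (a) says that $F$ is alternating in $(v,w)$. Identity (c) says that $F$ is alternating in $(z,y)$. Strictly speaking these identities give antisymmetry, but over $\RR$ antisymmetry of a bilinear form implies alternation, since $F(v,v,\cdot,\cdot)=-F(v,v,\cdot,\cdot)$.

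By the universal property of $\Lambda^2$, applied first in the pair $(v,w)$ with $(z,y)$ fixed and then in the pair $(z,y)$, $F$ therefore factors uniquely through a bilinear form $\Qop_x\colon\Lambda^2T_xM\times\Lambda^2T_xM\to\RR$ with $\Qop_x(v\wedge w,z\wedge y)=F(v,w,z,y)$. Equivalently, $F$ factors through $\Lambda^2T_xM\otimes\Lambda^2T_xM$. Uniqueness holds because decomposable elements span $\Lambda^2T_xM$, so the prescription on decomposables determines $\Qop_x$. This settles well-definedness.

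For symmetry, it suffices to check $\Qop_x(v\wedge w,z\wedge y)=\Qop_x(z\wedge y,v\wedge w)$ on decomposables, because both sides are bilinear and decomposables span. The left side is $\Rfour_x(v,w,y,z)$ and the right side is $\Rfour_x(z,y,w,v)$. Applying (d) gives $\Rfour_x(v,w,y,z)=\Rfour_x(y,z,v,w)$. Applying (a) and (c) once each then gives $\Rfour_x(y,z,v,w)=\Rfour_x(z,y,w,v)$, which is the right side.

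For the sectional curvature, let $(v',w')$ be another basis of $\sigma$. Writing it as $(v',w')=(av+bw,\,cv+dw)$ with $\Delta=ad-bc\ne0$, one has $v'\wedge w'=\Delta\,v\wedge w$ in $\Lambda^2T_xM$. Bilinearity of $\Qop_x$ and of $\langle\cdot,\cdot\rangle_{\Lambda^2}$ (Lemma~\ref{lem:prehilbert}) then scales both the numerator and the denominator by $\Delta^2$. The denominator is nonzero by Lemma~\ref{lem:prehilbert}, since $v,w$ are independent, so the quotient is unchanged. The two expressions for $\sect_x$ in Definition~\ref{dfn:curvform} agree by definition of $\Qop_x$.

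I expect no step to be a genuine obstacle. The only point needing care is the descent to $\Lambda^2$ for an infinite-dimensional $T_xM$. This is purely algebraic: the universal property of the exterior square needs no basis. Alternatively, one can argue as in Lemma~\ref{lem:prehilbert} by restricting to finite-dimensional subspaces $F\subseteq T_xM$ and checking compatibility under inclusions.
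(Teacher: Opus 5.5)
Your proposal is correct and follows the paper's proof. You descend the quadrilinear form to $\Lambda^2T_xM\times\Lambda^2T_xM$ using the antisymmetries (a) and (c), obtain symmetry from (d) combined with (a) and (c), and show that a change of basis scales numerator and denominator by $(ad-bc)^2$; your remarks that antisymmetry implies alternation over $\RR$ and that the universal property needs no basis only spell out what the paper leaves implicit.
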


\begin{proof}
$\Rfour_x$ is quadrilinear, antisymmetric in its first two arguments by (a) and in its last two by (c); hence $\Qop_x$ descends to $\Lambda^2T_xM\times\Lambda^2T_xM$. Symmetry follows from (d) together with (a) and (c). If $(v',w')=(av+cw,\,bv+dw)$ is another basis of $\sigma$, then $v'\wedge w'=(ad-bc)\,v\wedge w$, so numerator and denominator are both multiplied by $(ad-bc)^2$.
\end{proof}

\begin{rem}
Only (a) and (c) enter the definition of $\sect_x$. We use the term \emph{curvature form}, rather than curvature operator, because $\Lambda^2T_xM$ need not be complete. After passing to a Hilbert completion, an operator representing $\Qop_x$ exists only under an additional continuity hypothesis; no such hypothesis is imposed here.
\end{rem}

\subsection{The Ricci curvature}\label{subsec:ricci}
The finite-dimensional formula $\Ric_x(v,w)=\tr\bigl(z\mapsto\Rpt_x(z,v,w)\bigr)$ does not directly make sense on a general pre-Hilbert fibre $T_xM$. A natural Hilbert-space substitute is to pass to the completion $\overline{T_xM}$ and ask whether $z\mapsto\Rpt_x(z,v,w)$ extends to a trace-class operator there. Such an extension need not exist, and even when it is bounded it need not be trace class. For loop groups with Sobolev metrics this phenomenon is well known: the operator $Z\mapsto R(Z,X)Y$ is pseudodifferential of order at most $0$, so the Ricci trace does not exist in the strict sense, and for the $L^2$ metric on $\Omega G$ it diverges \cite[Proposition~1.14, Proposition~2.7]{Fre88}. The following example shows that the same obstruction occurs in the diffeological setting, for a mapping space whose target is not a Lie group.

\begin{ex}[failure of the trace construction]\label{ex:noric}
Let $N=S^2$ with its round metric and Levi-Civita connection, and let $\Xmap=C^\infty(S^1,S^2)$ with the metric $g_{\mathit{map}}$ for the standard volume form on $S^1$ and the connection $K_\Xmap$ of \cite[\S\S7.3--7.4]{Shio}. By Proposition~\ref{prop:tensorial} and \cite[Proposition~7.13]{Shio}, $K_\Xmap$ is a curvature-tensorial Levi-Civita connection for $g_{\mathit{map}}$, hence has pointwise curvature symmetries by Theorem~\ref{thm:S}; moreover, for $f\in\Xmap$ and $z,v,w\in T_f\Xmap=\{v\in C^\infty(S^1,TS^2):\pi\circ v=f\}$,
\[
\Rpt_f(z,v,w)(p)=\langle v(p),w(p)\rangle\,z(p)-\langle z(p),w(p)\rangle\,v(p),\qquad p\in S^1 .
\]
In particular $\Rfour_f(v,w,w,v)=\int_{S^1}\bigl(|v|^2|w|^2-\langle v,w\rangle^2\bigr)d\theta$, so $\sect_f\ge0$ on every plane, with equality for instance when $v$ and $w$ have disjoint supports.

Choose open arcs $J\Subset J'\subseteq S^1$. Since $f^*TS^2$ is trivial over $J'$, choose a smooth unit section $u$ there and extend $u$ by a cut-off to a global smooth field $v\in T_f\Xmap$ with $|v|=1$ on $J$. Put $A:=\Rpt_f(-,v,v)$. The pointwise formula above implies $|A(z)(p)|\le C|z(p)|$ for a constant $C$, so $A$ extends uniquely to a bounded operator on the Hilbert completion $L^2(f^*TS^2)$. For $z$ supported in $J$, $A(z)(p)$ is the orthogonal projection of $z(p)$ onto $v(p)^\perp$.

On $J$ let $e(p):=f(p)\times v(p)$ (cross product in $\RR^3$), a unit section of $v^\perp$. Choose an infinite $L^2(J)$-orthonormal sequence $(\varphi_n)_n\subset C_c^\infty(J)$ and extend each $\varphi_ne$ by zero to $S^1$. These sections lie in $T_f\Xmap$, form an orthonormal sequence in $L^2(f^*TS^2)$, and satisfy $A(\varphi_ne)=\varphi_ne$. Therefore the bounded extension of $A$ is not compact and hence is not trace class. Equivalently, after extending this orthonormal sequence to a Hilbert basis, the diagonal sum for the trace contains infinitely many terms equal to $1$. Thus $\Ric_f(v,v)$ cannot be defined by the Hilbert-space trace.
\end{ex}

\begin{rem}\label{rem:ricci-admissible}
One may call $x$ \emph{Ricci-admissible} if $\Rpt_x(-,v,w)$ extends to a trace-class operator on $\overline{T_xM}$ for all $v,w$, and put $\Ric_x(v,w):=\tr\Rpt_x(-,v,w)$ there. Points with finite-dimensional tangent space are Ricci-admissible, while Example~\ref{ex:noric} shows that no point of $C^\infty(S^1,S^2)$ is. For particular metrics a modified trace may still define a Ricci form; for instance, Freed takes the trace over the Lie algebra first for the $H^{1/2}$ metric on $\Omega G$ \cite[\S\S1--2]{Fre88}.
\end{rem}

\section{Second variation, Jacobi fields and the index form}\label{sec:var}

Throughout this section $(M,g)$ is an elastic Riemannian diffeological space and $K$ is a curvature-tensorial Levi-Civita connection for $g$. By Theorem~\ref{thm:S}, $K$ has pointwise curvature symmetries, so the sectional curvature is defined on every plane. The hypotheses (SH1)--(SH3) of Definition~\ref{dfn:SH} are imposed only where stated: Lemma~\ref{lem:linearize} uses (SH1); Theorem~\ref{thm:jacobi} uses (SH1) and a geodesic flow for existence, and (SH3) for uniqueness; Lemma~\ref{lem:transport} uses a full connection with a geodesic flow; Remark~\ref{rem:variation-pair} and Theorem~\ref{thm:nonmin} use (SH1) and (SH2), but not (SH3). All other results of this section use none of them. For $\gamma\in\Path(M;x,y)$ let
\[
\XX_0(\gamma):=\{W\colon\RR\to TM\ \text{smooth}:\ \pi_M\circ W=\gamma,\ W(0)=0_M(x),\ W(1)=0_M(y)\},
\]
and let $\XX_{00}(\gamma)\subseteq\XX_0(\gamma)$ be the subspace of fields vanishing on $[0,1]$. We write $\nabla_t=\nabla_{\partial}$ for fields along $\gamma$ and $\nabla_t^2=\nabla_t\nabla_t$.

\subsection{The second variation of the energy}

\begin{lem}\label{lem:E-smooth}
$E\colon\Path(M;x,y)\to\RR$ is smooth.
\end{lem}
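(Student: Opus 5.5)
To show that $E$ is smooth, it suffices by the definition of the diffeology on $\Path(M;x,y)$ to check that $E\circ P\colon U\to\RR$ is smooth for every plot $P\colon U\to\Path(M;x,y)$, $U\subseteq\RR^k$ open. Fix such a $P$. Its adjoint $\ad(P)\colon U\times\RR\to M$ is a plot of $M$, and the restriction principle (Lemma~\ref{lem:along}(2), applied to the slice $t\mapsto(u,t)$) gives $\dot{P(u)}(t)=V_t(u,t)$ with $V_t=\partial_t\ad(P)=T(\ad(P))\circ\partial_t$. Hence
\[
E(P(u))=\tfrac12\int_0^1 h(u,t)\,dt,\qquad h:=g\circ\langle V_t,V_t\rangle\colon U\times\RR\to\RR .
\]

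The function $h$ is smooth. Indeed, $V_t$ is a smooth map $U\times\RR\to TM$, so $\langle V_t,V_t\rangle$ is a smooth map into $TM\times_MTM$. By \cite[Definition~4.1]{Shio}, $g$ is a smooth map on $TM\times_MTM$, so the composite $h$ is an ordinary smooth function on the open set $U\times\RR\subseteq\RR^{k+1}$, since smooth real functions on a domain are exactly the plots of $\RR$.

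It remains to see that $u\mapsto\int_0^1h(u,t)\,dt$ is smooth on $U$. This is the classical theorem on differentiation under the integral sign over the compact interval $[0,1]$. All partial derivatives $\partial_u^\alpha h$ are continuous on $U\times\RR$, hence locally bounded uniformly in $t\in[0,1]$ on compact neighbourhoods of each $u_0$. Induction on $|\alpha|$ then shows that $\partial_u^\alpha\int_0^1h\,dt=\int_0^1\partial_u^\alpha h\,dt$ exists and is continuous. Therefore $E\circ P$ is $C^\infty$ for every plot $P$, and $E$ is smooth.

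The only point needing care is the identification of $\dot\gamma$ for $\gamma=P(u)$ with the slice of $\partial_t\ad(P)$, together with smoothness of the adjoint. Both come directly from cartesian closedness (the functional diffeology on $\Path$) and from the restriction principle, as already noted after Definition~\ref{dfn:variation}. I expect no real obstacle; the argument simply reduces $E$ to a parameter integral of a smooth Euclidean function.
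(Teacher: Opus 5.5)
Your proposal is correct and follows essentially the same route as the paper: pass to the adjoint plot $\ad(P)$, note that $g\circ\langle V_t,V_t\rangle$ is an ordinary smooth function on $U\times\RR$, and conclude by differentiation under the integral sign over the compact interval $[0,1]$. The paper leaves implicit the two details you spell out, namely identifying the velocity of $P(u)$ with the slice $V_t(u,\cdot)$ and explaining why $g\circ\langle V_t,V_t\rangle$ is Euclidean-smooth, and both of your justifications are correct.
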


\begin{proof}
Let $P\colon U\to\Path(M;x,y)$ be a plot and $V_t:=\partial_t\ad(P)$. Then $\ad(P)\colon U\times\RR\to M$ is smooth, hence so is $g(V_t,V_t)=g\circ\langle V_t,V_t\rangle\colon U\times\RR\to\RR$. Since $[0,1]$ is compact, differentiation under the integral sign shows that $u\mapsto E(P(u))=\frac12\int_0^1g(V_t,V_t)(u,t)\,dt$ is smooth.
\end{proof}

\begin{dfn}\label{dfn:index}
Let $\gamma\in\Path(M;x,y)$ satisfy the geodesic equation on $[0,1]$. The \emph{index form} of $\gamma$ is the bilinear form on $\XX_0(\gamma)$ \textup{(}bilinear by Lemma~\ref{lem:along}(1),(3) and trilinearity of $\Rpt$\textup{)}
\[
I(V,W):=\int_0^1\Bigl[g(\nabla_tV,\nabla_tW)-\Rfour(V,\dot\gamma,\dot\gamma,W)\Bigr]dt .
\]
\end{dfn}

By the restriction principle, the integrand on $(0,1)$ depends only on $V|_{(0,1)}$ and $W|_{(0,1)}$; hence $I(V,W)=0$ whenever $V\in\XX_{00}(\gamma)$, and $I$ descends to the quotient $\overline{\XX}_0(\gamma):=\XX_0(\gamma)/\XX_{00}(\gamma)$.

\begin{thm}\label{thm:hessian}
Let $\gamma\in\Path(M;x,y)$ satisfy the geodesic equation on $[0,1]$, and let $P\colon U\to\Path(M;x,y)$ be a $2$-parameter variation plot of $\gamma$ with variation fields $V_1,V_2$ (Definition~\ref{dfn:variation}). Put $V_i^0:=V_i(0,\cdot)$. Then $V_i^0\in\XX_0(\gamma)$ and
\[
\partial_1\partial_2(E\circ P)(0)=I(V_1^0,V_2^0).
\]
\end{thm}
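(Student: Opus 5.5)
First I check that $V_i^0\in\XX_0(\gamma)$. By the restriction principle applied to the slice $t\mapsto(0,t)$, $V_i^0$ is a field along $\ad(P)(0,\cdot)=\gamma$. Since $P$ takes values in $\Path(M;x,y)$, the maps $u\mapsto\ad(P)(u,0)=x$ and $u\mapsto\ad(P)(u,1)=y$ are constant. Restricting to the slices $u\mapsto(u,0)$ and $u\mapsto(u,1)$ and using \eqref{eq:partial-tangent} therefore gives $V_i(u,0)=0_M(x)$ and $V_i(u,1)=0_M(y)$ for all $u$.

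Next I differentiate under the integral sign, as in Lemma~\ref{lem:E-smooth}. With $Q:=\ad(P)\colon U\times\RR\to M$ and $V_t=\partial_tQ$, I compute $\partial_1\partial_2\frac12\int_0^1 g(V_t,V_t)\,dt$. By \eqref{eq:metric-along} applied to fields along the plot $Q$, $\partial_2\frac12 g(V_t,V_t)=g(\nabla_2V_t,V_t)$, and Lemma~\ref{lem:symmetry} turns this into $g(\nabla_tV_2,V_t)$. A second application of \eqref{eq:metric-along} gives
\[
\partial_1\partial_2\tfrac12 g(V_t,V_t)=g(\nabla_1\nabla_tV_2,V_t)+g(\nabla_tV_2,\nabla_1V_t).
\]
In the second term I use $\nabla_1V_t=\nabla_tV_1$ (Lemma~\ref{lem:symmetry}); restricted to $u=0$, this yields $g(\nabla_tV_2^0,\nabla_tV_1^0)$.

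In the first term I commute the derivatives using Theorem~\ref{thm:commute}:
\[
\nabla_1\nabla_tV_2=\nabla_t\nabla_1V_2+\Rpt(\partial_1Q,\partial_tQ,V_2).
\]
Metric compatibility then gives $g(\nabla_t\nabla_1V_2,V_t)=\partial_t g(\nabla_1V_2,V_t)-g(\nabla_1V_2,\nabla_tV_t)$. At $u=0$ the last term vanishes on $[0,1]$ because $\gamma$ satisfies the geodesic equation there. The term $\partial_t g(\nabla_1V_2,V_t)$ integrates to boundary values at $t=0,1$. These vanish because $V_2(u,0)=0_M(x)$ for all $u$, so that $\nabla_1V_2(u,0)=0$ by Lemma~\ref{lem:along}(1) together with the restriction principle applied to the slice $u\mapsto(u,0)$, and similarly at $t=1$.

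The remaining term is $g(\Rpt(V_1^0,\dot\gamma,V_2^0),\dot\gamma)=\Rfour(V_1^0,\dot\gamma,V_2^0,\dot\gamma)$. Using the pointwise symmetries of Theorem~\ref{thm:S}, I rewrite it as $-\Rfour(V_1^0,\dot\gamma,\dot\gamma,V_2^0)$ via (c), which matches the integrand of $I(V_1^0,V_2^0)$. To pass from pointwise symmetries of $\Rfour_x$ to identities between fields along $\gamma$, I evaluate at each $t$, since $\Rfour$ along $f$ is defined by composing with $\Rpt$. By symmetry of $g$ the first term matches too, which gives the formula.

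I expect the main obstacle to be the justification of the boundary-term vanishing and of the restriction steps: one has to see that $\nabla_1V_2$ vanishes at $t=0$, which requires applying Lemma~\ref{lem:along}(2) with $A'=\partial_1$ on the slice $u\mapsto(u,0)$ and noting that $V_2\circ j=0_M\circ x$. The commutation step in the first term is the only place curvature enters, and Theorem~\ref{thm:commute} handles it with no bracket term.
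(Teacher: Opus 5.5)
Your proof is correct and follows the paper's argument essentially step for step. You differentiate under the integral with metric compatibility, use Lemma~\ref{lem:symmetry} to swap $\nabla_2V_t=\nabla_tV_2$ and $\nabla_1V_t=\nabla_tV_1$, and commute $\nabla_1\nabla_tV_2$ by Theorem~\ref{thm:commute}. You then remove the $g(\nabla_t\nabla_1V_2,V_t)$ term using the geodesic equation on $[0,1]$ and the vanishing of $\nabla_1V_2$ on the slices $t=0,1$, and fix the sign of the curvature term by antisymmetry in the last two slots.

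The only cosmetic difference is in that last step. You invoke the pointwise symmetry (c) of Theorem~\ref{thm:S}, whereas the paper applies Corollary~\ref{cor:plot-sym}(2) directly to the fields $V_2,V_t$ along $\ad(P)$; that corollary is what Theorem~\ref{thm:S}(c) is deduced from, so both routes are valid.
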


\begin{proof}
Write $F:=\ad(P)\colon U\times\RR\to M$. As in the proof of \cite[Lemma~6.28]{Shio}, for all $s\in U$,
\[
\partial_2(E\circ P)(s)=\int_0^1g(\nabla_2V_t,V_t)(s,t)\,dt .
\]
Differentiating under the integral sign and using \eqref{eq:metric-along} with $A=\partial_1$,
\[
\partial_1\partial_2(E\circ P)=\int_0^1\Bigl[g(\nabla_1\nabla_2V_t,V_t)+g(\nabla_2V_t,\nabla_1V_t)\Bigr]dt .
\]
By Lemma~\ref{lem:symmetry}, $\nabla_2V_t=\nabla_tV_2$ and $\nabla_1V_t=\nabla_tV_1$; by Theorem~\ref{thm:commute} applied to $W=V_2$ in the directions $\partial_1,\partial_t$,
\[
\nabla_1\nabla_2V_t=\nabla_1\nabla_tV_2=\nabla_t\nabla_1V_2+\Rpt(V_1,V_t,V_2).
\]

We evaluate at $s=0$. Let $j_0(t)=(0,t)$. By the restriction principle $V_t\circ j_0=\dot\gamma$, $V_i\circ j_0=V_i^0$, $(\nabla_tV_i)\circ j_0=\nabla_tV_i^0$, and $(\nabla_tV_t)\circ j_0=\nabla_t\dot\gamma$, which vanishes on $[0,1]$. By \eqref{eq:metric-along} with $A=\partial_t$,
\[
\partial_t\,g(\nabla_1V_2,V_t)=g(\nabla_t\nabla_1V_2,V_t)+g(\nabla_1V_2,\nabla_tV_t),
\]
and the last term vanishes along $j_0([0,1])$. Hence
$\int_0^1g(\nabla_t\nabla_1V_2,V_t)(0,t)\,dt=\bigl[g(\nabla_1V_2,V_t)(0,t)\bigr]_{t=0}^{t=1}$. Let $k_0(s)=(s,0)$. Since $F\circ k_0$ is constant at $x$, the restriction principle gives $V_i\circ k_0=0_M(x)$ and $(\nabla_1V_2)\circ k_0=\nabla_1(V_2\circ k_0)=0_M(x)$ by Lemma~\ref{lem:along}(1). The same holds at $t=1$, so the boundary term vanishes; this also shows $V_i^0(0)=0_M(x)$ and $V_i^0(1)=0_M(y)$.

Finally, Corollary~\ref{cor:plot-sym}(2) for the directions $\partial_1,\partial_t$ and the fields $V_2,V_t$ gives $\Rfour(V_1,V_t,V_2,V_t)=-\Rfour(V_1,V_t,V_t,V_2)$. Collecting terms at $s=0$,
\[
\partial_1\partial_2(E\circ P)(0)=\int_0^1\Bigl[g(\nabla_tV_2^0,\nabla_tV_1^0)-\Rfour(V_1^0,\dot\gamma,\dot\gamma,V_2^0)\Bigr]dt=I(V_1^0,V_2^0).\qedhere
\]
\end{proof}

\begin{cor}[second variation]\label{cor:2ndvar}
Let $\gamma\in\Path(M;x,y)$ satisfy the geodesic equation on $[0,1]$, and let $P\colon(-\varepsilon,\varepsilon)\to\Path(M;x,y)$ be a variation plot of $\gamma$ in the sense of \cite[Definition~6.24]{Shio}, with variation field $W:=V_1(0,\cdot)$. Then
\[
\frac{d^2(E\circ P)}{ds^2}(0)=I(W,W)=\int_0^1\Bigl[\|\nabla_tW\|^2-\sect(W,\dot\gamma)\,\|W\wedge\dot\gamma\|_{\Lambda^2}^2\Bigr]dt,
\]
where the second term is read as $0$ at those $t$ at which $W(t),\dot\gamma(t)$ are linearly dependent.
\end{cor}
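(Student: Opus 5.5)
The plan is to reduce the corollary to Theorem~\ref{thm:hessian} by turning the one-parameter variation plot into a two-parameter one. Let $U:=\{(s_1,s_2)\in\RR^2: s_1+s_2\in(-\varepsilon,\varepsilon)\}$, an open neighbourhood of $0$, and set $\tilde P(s_1,s_2):=P(s_1+s_2)$. This is a plot of $\Path(M;x,y)$, being the composite of $P$ with a smooth map, and $\tilde P(0)=\gamma$, so it is a $2$-parameter variation plot. Then $E\circ\tilde P(s_1,s_2)=(E\circ P)(s_1+s_2)$. By Lemma~\ref{lem:E-smooth}, $E\circ P$ is smooth, so the chain rule gives
\[
\partial_1\partial_2(E\circ\tilde P)(0)=\frac{d^2(E\circ P)}{ds^2}(0).
\]

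Next I identify the variation fields of $\tilde P$. We have $\ad(\tilde P)=\ad(P)\circ\beta$ with $\beta(s_1,s_2,t)=(s_1+s_2,t)$. Since $T(\beta)\circ\partial_i=\partial_s\circ\beta$ for $i=1,2$, functoriality gives $\tilde V_i=V_1\circ\beta$, where $V_1=\partial_s\ad(P)$. Restricting to $(0,0,t)$ yields $\tilde V_1^0=\tilde V_2^0=W$. Theorem~\ref{thm:hessian} then gives $\frac{d^2}{ds^2}(E\circ P)(0)=I(W,W)$. Symmetry of $I$ is not needed here; it is already built into the theorem through the symmetric roles of the variation fields.

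For the sectional-curvature form of the integrand, fix $t$. By Theorem~\ref{thm:S}, identities (a) and (c) give
\[
\Rfour(W,\dot\gamma,\dot\gamma,W)=\Rfour(W,\dot\gamma,\dot\gamma,W),
\]
which is exactly the numerator in Definition~\ref{dfn:curvform} with $v=W(t)$ and $w=\dot\gamma(t)$. If $W(t)$ and $\dot\gamma(t)$ are linearly independent, the numerator equals $\sect(W,\dot\gamma)\,\|W\wedge\dot\gamma\|^2_{\Lambda^2}$ by definition. If they are dependent, the term is $0$, since $\Rfour_x(v,w,w,v)$ vanishes whenever $v$ and $w$ are proportional, by antisymmetry (a). This pointwise identity holds for every $t\in[0,1]$, and integrating gives the stated formula.

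The only step needing care is the identification $\tilde V_i=V_1\circ\beta$ together with the chain rule for $E\circ\tilde P$. Both are routine consequences of functoriality of $T$ and the restriction principle, Lemma~\ref{lem:along}(2). The identification $W=V_1(0,\cdot)$ with the field $V_s$ of \cite[Lemma~6.28]{Shio} is already noted after Definition~\ref{dfn:variation}.
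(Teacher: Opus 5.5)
Your proposal is correct and follows the paper's proof. You reparametrize to $\tilde P(s_1,s_2)=P(s_1+s_2)$ on $\{s_1+s_2\in(-\varepsilon,\varepsilon)\}$, identify both variation fields at $0$ with $W$ via $\ad(\tilde P)=\ad(P)\circ\beta$ and $T(\beta)\circ\partial_i=\partial_s\circ\beta$, apply Theorem~\ref{thm:hessian}, and read off the sectional-curvature form from Definition~\ref{dfn:curvform}, using antisymmetry at dependent points. (One small point: your displayed identity ``from (a) and (c)'' is a tautology. The integrand $\Rfour(W,\dot\gamma,\dot\gamma,W)$ is already literally the numerator $\Rfour_x(v,w,w,v)$ with $v=W(t)$ and $w=\dot\gamma(t)$, so no symmetries are needed at that step.)
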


\begin{proof}
Apply Theorem~\ref{thm:hessian} to the $2$-parameter variation plot $\tilde P(s_1,s_2):=P(s_1+s_2)$, defined on the open neighbourhood $\{(s_1,s_2):s_1+s_2\in(-\varepsilon,\varepsilon)\}$ of $0$. Since $\ad(\tilde P)=\ad(P)\circ(a\times\id)$ with $a(s_1,s_2)=s_1+s_2$ and $T(a)\circ\partial_i=\partial\circ a$, both variation fields of $\tilde P$ at $0$ equal $W$, while $\partial_1\partial_2(E\circ\tilde P)(0)=(E\circ P)''(0)$. The last expression is Definition~\ref{dfn:curvform}; at linearly dependent points $\|W\wedge\dot\gamma\|_{\Lambda^2}^2=0$ and $\Rpt(W,\dot\gamma,\dot\gamma)=0$ by antisymmetry.
\end{proof}

\begin{cor}\label{cor:I-sym}
The index form is symmetric.
\end{cor}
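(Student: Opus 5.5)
The plan is to prove symmetry directly from the definition of $I$ and the pointwise curvature symmetries of Theorem~\ref{thm:S}, without going through Theorem~\ref{thm:hessian}. Deriving symmetry from $\partial_1\partial_2(E\circ P)(0)=\partial_2\partial_1(E\circ P)(0)$ would require realizing an arbitrary pair $V,W\in\XX_0(\gamma)$ as the variation fields of a single $2$-parameter variation plot of $\gamma$. That realization is not available in general: there is no exponential map and no joint-realization result in $\Path(M;x,y)$ analogous to Lemma~\ref{lem:joint}. So this route should be avoided.

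Fix $V,W\in\XX_0(\gamma)$ and compare the integrands of $I(V,W)$ and $I(W,V)$ at each $t\in[0,1]$.

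\textbf{First term.} $g(\nabla_tV,\nabla_tW)=g(\nabla_tW,\nabla_tV)$ because $g$ is symmetric.

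\textbf{Curvature term.} By Notation~\ref{nota:R}, $\Rfour(V,\dot\gamma,\dot\gamma,W)(t)=\Rfour_{\gamma(t)}\bigl(V(t),\dot\gamma(t),\dot\gamma(t),W(t)\bigr)$. This is a pointwise evaluation of the morphism $\Rpt\circ\langle V,\dot\gamma,\dot\gamma\rangle$ followed by $g$, so only the quadrilinear form on the single fibre $T_{\gamma(t)}M$ is involved. At $x=\gamma(t)$, write $v=V(t)$, $w=W(t)$, $c=\dot\gamma(t)$. Theorem~\ref{thm:S} (which applies since $K$ is a curvature-tensorial Levi-Civita connection) gives identities (a)--(d) for $\Rfour_x$. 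Then
\[
\Rfour_x(v,c,c,w)\overset{\mathrm{(d)}}{=}\Rfour_x(c,w,v,c)\overset{\mathrm{(a)}}{=}-\Rfour_x(w,c,v,c)\overset{\mathrm{(c)}}{=}\Rfour_x(w,c,c,v).
\]
So the integrands of $I(V,W)$ and $I(W,V)$ agree pointwise, and integrating over $[0,1]$ gives $I(V,W)=I(W,V)$.

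There is no real obstacle. The only point to check is that $\Rfour(A,B,C,D)$ for fields along $\gamma$ is evaluated fibrewise, so that the pointwise identities of Theorem~\ref{thm:S} apply to the values $V(t),W(t),\dot\gamma(t)$. This holds because $\Rpt$ is a morphism over $M$ and $\langle V,\dot\gamma,\dot\gamma\rangle$ lands in $T_3M$ over $\gamma$. Theorem~\ref{thm:hessian} then shows, as a consistency remark, that mixed second derivatives of $E$ along $2$-parameter variation plots are symmetric, matching equality of mixed partials.
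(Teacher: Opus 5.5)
Your proof is correct and is essentially the paper's: the paper also argues pointwise, applying (d), (a) and (c) to get $\Rfour(V,\dot\gamma,\dot\gamma,W)=\Rfour(\dot\gamma,W,V,\dot\gamma)=\Rfour(W,\dot\gamma,\dot\gamma,V)$. Your caveat about the variational route holds at the generality of the corollary, but under (SH1) and (SH2) the paper does realize an arbitrary pair $V,W$ as variation fields in Remark~\ref{rem:variation-pair}.
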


\begin{proof}
By (d), (a) and (c) of Definition~\ref{dfn:S}, $\Rfour(V,\dot\gamma,\dot\gamma,W)=\Rfour(\dot\gamma,W,V,\dot\gamma)=\Rfour(W,\dot\gamma,\dot\gamma,V)$ pointwise.
\end{proof}

The following transport lemma is used to construct variation plots with prescribed variation fields.

\begin{lem}\label{lem:transport}
Let $(K,H)$ be a full connection on $M$ and let $\Phi\colon\RR\times TM\to M$ be a geodesic flow of $(K,H)$. Let $Q\colon U\to M$ be smooth, $U\subseteq\RR^k$ open, and let $Y$ be a field along $Q$. Define
\[
Q^Y\colon U\times\RR\to M,\qquad Q^Y(u,s):=\Phi\bigl(s,Y(u)\bigr),
\]
and let $j\colon U\to U\times\RR$, $j(u)=(u,0)$. Then:
\begin{enumerate}[label=\textup{(\roman*)}]
\item $Q^Y\circ j=Q$ and $\partial_i(Q^Y)\circ j=\partial_iQ$ for $i\le k$;
\item $\partial_s(Q^Y)\circ j=Y$;
\item for every field $Z$ along $Q$, the map
\[
Z^\sharp\colon U\times\RR\to TM,\qquad Z^\sharp(u,s):=T_{(2)}\Phi\bigl(s,\,H(Z(u),Y(u))\bigr)
\]
is a field along $Q^Y$ with $Z^\sharp\circ j=Z$.
\end{enumerate}
\end{lem}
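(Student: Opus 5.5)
The plan is to reduce each item to the identities \eqref{eq:flow}, \eqref{eq:H} and \eqref{eq:partial-tangent}, together with naturality of $\pi$ and the decomposition of $T(\Phi)$ on $T(\RR\times TM)\cong T\RR\times T^2M$ into partial tangents. Throughout, note that $Q^Y=\Phi\circ(\mathrm{pr}_\RR,\,Y\circ\mathrm{pr}_U)$, which is a composite of smooth maps.

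For (i), evaluate at $s=0$. By the first identity of \eqref{eq:flow}, $Q^Y(u,0)=\Phi(0,Y(u))=\pi_M(Y(u))=Q(u)$, so $Q^Y\circ j=Q$. For $i\le k$ we have $T(j)\circ\partial_i=\partial_i\circ j$, since $j$ is the inclusion of a slice with constant last coordinate. Hence
\[
\partial_i(Q^Y)\circ j=T(Q^Y)\circ\partial_i\circ j=T(Q^Y)\circ T(j)\circ\partial_i=T(Q^Y\circ j)\circ\partial_i=\partial_iQ .
\]

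For (ii), I would compute $\partial_s(Q^Y)$ at a general point $(u,s)$. The vector $\partial_s$ at $(u,s)$ is $(0_U(u),\partial(s))$. Its image under $T(\mathrm{pr}_\RR,\,Y\circ\mathrm{pr}_U)$ is $(\partial(s),\,0_{TM}(Y(u)))$, because $T$ of the $u$-part applied to a zero vector is a zero vector by naturality of $0$. Therefore
\[
\partial_s(Q^Y)(u,s)=T(\Phi)\bigl(\partial(s),0_{TM}(Y(u))\bigr)=\Phi^{(1)}(s,Y(u)),
\]
directly from the definition $\Phi^{(1)}=T(\Phi)\circ(\partial\times0_{TM})$. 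Setting $s=0$ and using $\Phi^{(1)}(0,v)=v$ gives $\partial_s(Q^Y)\circ j=Y$. This is the step I expect to need the most care: the identification of $T$ of the product map with the pair of tangents, and the fact that the $U$-direction contributes only a zero vector. It is justified by $T$ preserving products in $\Elast$ and by naturality of the zero section.

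For (iii), note first that $Z(u)$ and $Y(u)$ both lie over $Q(u)$, so $\langle Z,Y\rangle\colon U\to TM\times_MTM$ is smooth. Consequently $Z^\sharp$ is smooth as a composite with $H$ and with the smooth partial tangent $T_{(2)}\Phi$. Next, naturality of $\pi$ gives $\pi_M\circ T_{(2)}\Phi(s,\xi)=\Phi(s,\pi_{TM}(\xi))$, which follows from \eqref{eq:partial-tangent} because $\pi_M\circ T(f)=f\circ\pi$. Combining this with $\pi_{TM}\circ H=\mathrm{pr}_2$ from \eqref{eq:H} yields $\pi_M\circ Z^\sharp(u,s)=\Phi(s,Y(u))=Q^Y(u,s)$, so $Z^\sharp$ is a field along $Q^Y$. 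Finally, at $s=0$, \eqref{eq:partial-tangent} gives $T_{(2)}\Phi(0,\xi)=T(\Phi(0,-))(\xi)=T(\pi_M)(\xi)$, using $\Phi(0,-)=\pi_M$ from \eqref{eq:flow}. Then $T(\pi_M)\circ H=\mathrm{pr}_1$ from \eqref{eq:H} gives $Z^\sharp(u,0)=Z(u)$, i.e.\ $Z^\sharp\circ j=Z$.
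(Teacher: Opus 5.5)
Your proposal is correct and follows the paper's proof essentially step for step. You obtain (i) from $\Phi(0,-)=\pi_M$ and $T(j)\circ\partial_i=\partial_i\circ j$, and (ii) from product-preservation of $T$ and naturality of $0$, which identify $\partial_s(Q^Y)(u,s)$ with $\Phi^{(1)}(s,Y(u))$. You obtain (iii) from \eqref{eq:partial-tangent}, naturality of $\pi$ and the two projection identities in \eqref{eq:H}, exactly as the paper does.
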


\begin{proof}
(i) $Q^Y(u,0)=\Phi(0,Y(u))=\pi_M(Y(u))=Q(u)$ by \eqref{eq:flow}. Since $T(j)\circ\partial_i=\partial_i\circ j$, we get $T(Q^Y)\circ\partial_i\circ j=T(Q^Y\circ j)\circ\partial_i=T(Q)\circ\partial_i$.

(ii) Since $T$ preserves products and $T(Y)$ maps zero vectors to zero vectors, $T(Q^Y)\circ\partial_s(u,s)=T(\Phi)\bigl(\partial(s),0_{TM}(Y(u))\bigr)=\Phi^{(1)}(s,Y(u))$, which equals $Y(u)$ at $s=0$ by \eqref{eq:flow}.

(iii) $H(Z(u),Y(u))$ is defined because $\pi_M\circ Z=Q=\pi_M\circ Y$, and $Z^\sharp$ is smooth. Write $\Phi_s:=\Phi(s,-)$; by \eqref{eq:partial-tangent}, $Z^\sharp(u,s)=T(\Phi_s)\bigl(H(Z(u),Y(u))\bigr)$. By naturality of $\pi$ and \eqref{eq:H},
\[
\pi_M\bigl(Z^\sharp(u,s)\bigr)=\Phi_s\bigl(\pi_{TM}H(Z(u),Y(u))\bigr)=\Phi(s,Y(u)).
\] Finally $\Phi_0=\pi_M$ by \eqref{eq:flow}, so $Z^\sharp(u,0)=T(\pi_M)\bigl(H(Z(u),Y(u))\bigr)=Z(u)$ by \eqref{eq:H}.
\end{proof}

\begin{rem}\label{rem:variation-pair}
Corollary~\ref{cor:I-sym} can also be read off from Theorem~\ref{thm:hessian} and the symmetry of mixed partial derivatives of $E\circ P$, provided every pair $V,W\in\XX_0(\gamma)$ occurs as the pair of variation fields of a $2$-parameter variation plot. Under (SH1) and (SH2), such a variation plot is given by
\[
\ad(P)(s_1,s_2,t):=\Phi\Bigl(s_1,\ T_{(2)}\Phi\bigl(s_2,H(V(t),W(t))\bigr)\Bigr),
\]
as one checks by applying Lemma~\ref{lem:transport} first to $Q=\gamma$ and $Y=W$ and then to $Q^W$ and the field $V^\sharp$; the endpoints are fixed because $H(0_M(x),0_M(x))=0_{TM}(0_M(x))$ by the scalar axiom of $H$ \cite[Definition~3.12(e)]{Shio}, $T(\Phi_s)\circ0_{TM}=0_M\circ\Phi_s$ by naturality of $0$, and $\Phi(s,0_M(x))=x$ by (SH2).
\end{rem}

\subsection{Jacobi fields}

\begin{dfn}\label{dfn:jacobi}
Let $\gamma\colon\RR\to M$ be a path and $[a,b]\subseteq\RR$ a closed interval. A field $J$ along $\gamma$ \emph{satisfies the Jacobi equation on $[a,b]$} if
\[
\bigl(\nabla_t^2J+\Rpt(J,\dot\gamma,\dot\gamma)\bigr)(t)=0_M(\gamma(t))\qquad\text{for all }t\in[a,b].
\]
If $\gamma$ is a geodesic, a \emph{Jacobi field} along $\gamma$ is a field $J$ along $\gamma$ for which this holds for all $t\in\RR$.
\end{dfn}

\begin{prop}\label{prop:jacobi-var}
Let $F\colon(-\varepsilon,\varepsilon)\times\RR\to M$ be smooth with $F(s,\cdot)$ a geodesic for every $s$. Then $J:=\partial_sF(0,\cdot)$ is a Jacobi field along $\gamma:=F(0,\cdot)$.
\end{prop}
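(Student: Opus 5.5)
The argument is the classical one, with the manifold computation replaced by the plot identities of Section~\ref{sec:commute}. The map $F\colon(-\varepsilon,\varepsilon)\times\RR\to M$ is a plot of $M$, so we may use its coordinate fields $V_s:=\partial_sF$ and $V_t:=\partial_tF$, which are fields along $F$. We will show that
\[
\nabla_t\nabla_tV_s+\Rpt(V_s,V_t,V_t)=0_M\circ F
\]
holds on all of $(-\varepsilon,\varepsilon)\times\RR$. The Jacobi equation for $J$ then follows by restricting to the slice $j_0(t)=(0,t)$.

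The first step uses the geodesic hypothesis. Since $F(s,\cdot)$ is a geodesic for each $s$, the restriction principle (Lemma~\ref{lem:along}(2)) applied to $j_s(t)=(s,t)$ gives $(\nabla_tV_t)\circ j_s=\nabla_t\,\partial_t(F\circ j_s)=0_M\circ F\circ j_s$. Here we use $T(j_s)\circ\partial=\partial_t\circ j_s$. Hence $\nabla_tV_t=0_M\circ F$ on the whole domain, and therefore $\nabla_s\nabla_tV_t=0_M\circ F$ by Lemma~\ref{lem:along}(1).

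The second step is the commutation. Torsion-freeness of the Levi-Civita connection and Lemma~\ref{lem:symmetry} give $\nabla_tV_s=\nabla_sV_t$, so
\[
\nabla_t\nabla_tV_s=\nabla_t\nabla_sV_t .
\]
Theorem~\ref{thm:commute}, applied to $W=V_t$ in the directions $\partial_t,\partial_s$, then gives
\[
\nabla_t\nabla_sV_t-\nabla_s\nabla_tV_t=\Rpt(V_t,V_s,V_t).
\]
Combining this with the first step yields $\nabla_t^2V_s=\Rpt(V_t,V_s,V_t)=-\Rpt(V_s,V_t,V_t)$, where the last equality is antisymmetry in the first two slots (Corollary~\ref{cor:tensorial-unique}(1)), applied pointwise.

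The third step restricts to $s=0$. By the restriction principle, $V_s\circ j_0=J$, $V_t\circ j_0=\dot\gamma$ and $(\nabla_t^2V_s)\circ j_0=\nabla_t^2J$. Composing $\Rpt\circ\langle V_s,V_t,V_t\rangle$ with $j_0$ gives $\Rpt(J,\dot\gamma,\dot\gamma)$. Thus $\nabla_t^2J+\Rpt(J,\dot\gamma,\dot\gamma)=0_M\circ\gamma$ for all $t\in\RR$. Finally, $\gamma$ is itself a geodesic, since it is $F(0,\cdot)$, so $J$ is a Jacobi field in the sense of Definition~\ref{dfn:jacobi}.

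The main point requiring care is the bookkeeping of directions and signs in Theorem~\ref{thm:commute}, namely which index plays the role of $i$ and which of $j$. We also need to check that the vanishing of $\nabla_tV_t$ holds on the whole domain and not just on slices; this follows because each point lies on some slice $j_s$. No (SH) hypotheses are needed, only that $K$ is a curvature-tensorial Levi-Civita connection, as fixed at the start of this section.
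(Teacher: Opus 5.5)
Your proof is correct and follows the paper's argument: the geodesic condition on each slice gives $\nabla_tV_t=0_M\circ F$, and Theorem~\ref{thm:commute} with $W=V_t$, together with Lemma~\ref{lem:symmetry}, yields the Jacobi equation on the whole domain before restricting to $s=0$. The one difference is cosmetic. The paper takes the directions in the order $(\partial_s,\partial_t)$, so the curvature term appears directly as $\Rpt(V_s,V_t,V_t)$; your order $(\partial_t,\partial_s)$ needs the extra appeal to antisymmetry from Corollary~\ref{cor:tensorial-unique}(1), which is valid.
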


\begin{proof}
Write $V_s=\partial_sF$ and $V_t=\partial_tF$. By the restriction principle applied to the slices $t\mapsto(s,t)$, $\nabla_tV_t=0_M\circ F$. Theorem~\ref{thm:commute} with $W=V_t$ and Lemma~\ref{lem:along}(1) give $-\nabla_t\nabla_sV_t=\Rpt(V_s,V_t,V_t)$, and $\nabla_sV_t=\nabla_tV_s$ by Lemma~\ref{lem:symmetry}. Hence $\nabla_t^2V_s+\Rpt(V_s,V_t,V_t)=0$; restrict to $s=0$.
\end{proof}

The Jacobi equation is the linearization of the geodesic equation. To make this precise, let
\[
S^\uparrow:=\tau_{TM}\circ T(S)\colon T^2M\to T^3M .
\]
By \eqref{eq:pi-tau} at $TM$, $\pi_{T^2M}\circ S^\uparrow=T(\pi_{TM}\circ S)=\id_{T^2M}$, so $S^\uparrow$ is a vector field on $T^2M$, the tangent lift of $S$.

\begin{lem}\label{lem:linearize}
Assume \textup{(SH1)}. Let $\gamma$ be a geodesic, $J$ a field along $\gamma$, and
\[
\tilde J:=\Theta^{-1}\circ\langle\dot\gamma,\ J,\ \nabla_tJ\rangle\colon\RR\to T^2M .
\]
Then for every $t\in\RR$, $T(\tilde J)(\partial(t))=S^\uparrow(\tilde J(t))$ if and only if $\bigl(\nabla_t^2J+\Rpt(J,\dot\gamma,\dot\gamma)\bigr)(t)=0$.
\end{lem}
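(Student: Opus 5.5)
Both $T(\tilde J)\circ\partial$ and $S^\uparrow\circ\tilde J$ are fields along $\tilde J$, i.e.\ maps $\RR\to T^3M$ over $\tilde J$. The plan is to decide their equality by comparing invariants that separate points of $T^3M$ over a fixed point of $T^2M$. Since $\Theta\colon T^2M\to T_3M$ is a diffeomorphism, $T(\Theta)$ is a diffeomorphism too. Its components are $T(\pi_{TM})$, $T^2(\pi_M)$ and $T(K)$, each composed into $T^2M$. So a vector $\xi\in T^3M$ with $\pi_{T^2M}(\xi)=\tilde J(t)$ is determined by the three elements $T(\pi_{TM})\xi$, $T^2(\pi_M)\xi$ and $T(K)\xi$ of $T^2M$. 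I would go one step further and apply $\Theta$ to each of these, so that everything reduces to triples in $T_3M$. Both sides have the same base point $\tilde J(t)$, so it suffices to compare, for each $\Psi\in\{T(\pi_{TM}),T^2(\pi_M),T(K)\}$, the images of the two sides under $\Theta\circ\Psi$.

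For the left side, $\Psi\circ T(\tilde J)\circ\partial=T(\Psi\circ\tilde J)\circ\partial$, and $\Psi\circ\tilde J$ can be read off from $\Theta\circ\tilde J=\langle\dot\gamma,J,\nabla_tJ\rangle$:
\begin{itemize}
\item $\pi_{TM}\circ\tilde J=\dot\gamma$, so $T(\pi_{TM})\circ T(\tilde J)\circ\partial=T(\dot\gamma)\circ\partial$;
\item $T(\pi_M)\circ\tilde J=J$, so $T^2(\pi_M)\circ T(\tilde J)\circ\partial=T(J)\circ\partial$;
\item $K\circ\tilde J=\nabla_tJ$, so $T(K)\circ T(\tilde J)\circ\partial=T(\nabla_tJ)\circ\partial$.
\end{itemize}
Applying $\Theta$ to each gives triples built from $\dot\gamma$, $J$, $\nabla_tJ$ and their covariant derivatives, namely $(\dot\gamma,\dot\gamma,\nabla_t\dot\gamma)=(\dot\gamma,\dot\gamma,0)$, $(\dot\gamma,J,\nabla_tJ)$ and $(\dot\gamma,\nabla_tJ,\nabla_t^2J)$.

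For the right side I would use $S^\uparrow=\tau_{TM}\circ T(S)$ together with \eqref{eq:pi-tau}. This gives $T(\pi_{TM})\circ\tau_{TM}\circ T(S)=\pi_{T^2M}\circ T(S)=S\circ\pi_{T^2M}$, which at $\tilde J(t)$ is $S(\dot\gamma)$, and $\Theta(S(\dot\gamma))=(\dot\gamma,\dot\gamma,0)$ matches the first left-hand component automatically. The second component is $T^2(\pi_M)\circ\tau_{TM}\circ T(S)$. By naturality of $\tau$ this equals $\tau_M\circ T^2(\pi_M)\circ T(S)$. Using $T(\pi_M)\circ S=\id$ and the corresponding identity for $\pi_{TM}\circ S$, it should reduce to $\tilde J(t)$ itself, possibly after a flip $\tau_M$. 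A flip would exchange the first two $\Theta$-components, and this is to be compared with $(\dot\gamma,J,\nabla_tJ)$. I would check these bookkeeping identities in the coordinates $(v_S)$ on $T^3\RR^n$, and transfer them to general $M$ via plots, since all maps involved are natural.

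The main obstacle is the third component, $K\circ T(K)\circ\tau_{TM}\circ T(S)$ evaluated at $\tilde J$. I would write $K\circ T(K)\circ\tau_{TM}=C_K+K\circ T(K)$. Since $K\circ S=0_M\circ\pi_M$, we get $T(K)\circ T(S)=T(0_M)\circ T(\pi_M)$, and \eqref{eq:K-zero} then kills the term $K\circ T(K)\circ T(S)$. What remains is $C_K\circ T(S)(\tilde J)=\Rpt\circ\Pi\circ T(S)(\tilde J)$. By Lemma~\ref{lem:Pi} and $\pi\circ S=\id$, this should be $\Rpt(\dot\gamma,J,\dot\gamma)$ up to ordering. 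The second-level data of the left side's third component, $K\circ T(\nabla_tJ)\circ\partial=\nabla_t^2J$, gives the other half of the comparison. The resulting equation is $\nabla_t^2J=\Rpt(\dot\gamma,J,\dot\gamma)$, and by antisymmetry (Corollary~\ref{cor:tensorial-unique}) this is $\nabla_t^2J=-\Rpt(J,\dot\gamma,\dot\gamma)$, which is the Jacobi equation. The first-level parts of the third components, $T(\pi_{TM})$ of both sides, must also be checked to agree; I expect them to agree automatically, as above. Getting the index directions and signs right in this last computation is the delicate point, and I would verify it against the local formula $K_\Gamma$ from the proof of Proposition~\ref{prop:tensorial}.
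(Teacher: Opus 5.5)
Your overall strategy is the paper's. You decide equality in $T^3M$ through the diffeomorphism $T(\Theta)=\langle T(\pi_{TM}),T^2(\pi_M),T(K)\rangle$ and compare each component in $T^2M$ through $\Theta$. Your first component (geodesic equation and \eqref{eq:H}) matches the paper. So does the top entry of the third component: splitting off $C_K$, killing $K\circ T(K\circ S)$ by \eqref{eq:K-zero}, using $\Pi\circ T(S)=\langle\pi_{TM},T(\pi_M),\pi_{TM}\rangle$, and antisymmetry. The gap is in the second component, and it has two parts.

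First, your left-hand triples have their first two entries swapped. Since $\pi_{TM}\circ T(J)\circ\partial=J$ and $T(\pi_M)\circ T(J)\circ\partial=T(\gamma)\circ\partial=\dot\gamma$, one has $\Theta\circ T(J)\circ\partial=\langle J,\dot\gamma,\nabla_tJ\rangle$. Likewise $\Theta\circ T(\nabla_tJ)\circ\partial=\langle\nabla_tJ,\dot\gamma,\nabla_t^2J\rangle$. With your ordering, the second and third components could not match the right-hand side unless $J=\dot\gamma$, resp.\ $\nabla_tJ=\dot\gamma$.

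Second, the flip is not optional. You derived it yourself: $T^2(\pi_M)\circ S^\uparrow\circ\tilde J=\tau_M\circ T(T(\pi_M)\circ S)\circ\tilde J=\tau_M\circ\tilde J$. It also does more than exchange the first two $\Theta$-entries: by \eqref{eq:pi-tau}, $\Theta\circ\tau_M\circ\tilde J=\langle J,\dot\gamma,K\circ\tau_M\circ\tilde J\rangle$. To identify this third entry with $K\circ\tilde J=\nabla_tJ$ you need torsion-freeness, $K\circ\tau_M=K$. This is part of the Levi-Civita hypothesis in (SH1), but your proposal never uses it. Without it the second components need not agree, and the ``if'' direction fails.

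Once the ordering is corrected, the lower entries of the third component also come out as you expected. By naturality of $\pi$ and \eqref{eq:pi-tau}, $\pi_{TM}\circ T(K)\circ S^\uparrow\circ\tilde J=K\circ T(\pi_{TM}\circ S)\circ\tilde J=\nabla_tJ$. By axiom (b) of a vertical connection, $T(\pi_M)\circ T(K)\circ S^\uparrow\circ\tilde J=\dot\gamma$. Together these match $\langle\nabla_tJ,\dot\gamma,\nabla_t^2J\rangle$ in the first two slots.
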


\begin{proof}
By construction $\pi_{TM}\circ\tilde J=\dot\gamma$, $T(\pi_M)\circ\tilde J=J$ and $K\circ\tilde J=\nabla_tJ$. Put $\mathsf A:=T(\tilde J)\circ\partial$ and $\mathsf B:=S^\uparrow\circ\tilde J$. Since $\Theta$ is a diffeomorphism and $T$ preserves the fibre product $T_3M$, the morphism $T(\Theta)=\langle T(\pi_{TM}),T^2(\pi_M),T(K)\rangle$ is a diffeomorphism; so $\mathsf A(t)=\mathsf B(t)$ if and only if the images under $T(\pi_{TM})$, $T^2(\pi_M)$ and $T(K)$ agree, and elements of $T^2M$ are compared through $\Theta$. We use \eqref{eq:pi-tau} at $TM$ throughout.

\emph{First component.} $T(\pi_{TM})\circ\mathsf A=T(\dot\gamma)\circ\partial$ and $T(\pi_{TM})\circ\mathsf B=\pi_{T^2M}\circ T(S)\circ\tilde J=S\circ\dot\gamma$. Both have $\Theta$-components $(\dot\gamma,\dot\gamma,0)$, since $\gamma$ is a geodesic and by \eqref{eq:H}.

\emph{Second component.} $T^2(\pi_M)\circ\mathsf A=T(J)\circ\partial$, with $\Theta$-components $(J,\dot\gamma,\nabla_tJ)$. By naturality of $\tau$, $T^2(\pi_M)\circ\mathsf B=\tau_M\circ T(T(\pi_M)\circ S)\circ\tilde J=\tau_M\circ\tilde J$, whose $\Theta$-components are $(T(\pi_M)\tilde J,\pi_{TM}\tilde J,K\tau_M\tilde J)=(J,\dot\gamma,\nabla_tJ)$ by \eqref{eq:pi-tau} and torsion-freeness.

\emph{Third component.} $T(K)\circ\mathsf A=T(\nabla_tJ)\circ\partial$, with $\Theta$-components $(\nabla_tJ,\dot\gamma,\nabla_t^2J)$. For $T(K)\circ\mathsf B$: first, $\pi_{TM}\circ T(K)\circ\mathsf B=K\circ\pi_{T^2M}\circ\tau_{TM}\circ T(S)\circ\tilde J=K\circ T(\pi_{TM}\circ S)\circ\tilde J=\nabla_tJ$. Second, by axiom (b) of \cite[Definition~3.1]{Shio}, $T(\pi_M)\circ T(K)\circ\mathsf B=T(\pi_M)\circ T(\pi_{TM})\circ\tau_{TM}\circ T(S)\circ\tilde J=T(\pi_M)\circ S\circ\dot\gamma=\dot\gamma$. Third, by the definition of $C_K$,
\[
K\circ T(K)\circ\tau_{TM}\circ T(S)\circ\tilde J=K\circ T(K\circ S)\circ\tilde J+C_K\circ T(S)\circ\tilde J .
\]
Since $K\circ S=0_M\circ\pi_M$, the first term is $K\circ T(0_M)\circ J$, which vanishes by \eqref{eq:K-zero}. For the second, naturality of $\pi$ gives $\Pi\circ T(S)=\langle\pi_{TM},T(\pi_M),\pi_{TM}\rangle$, so $C_K\circ T(S)\circ\tilde J=\Rpt(\dot\gamma,J,\dot\gamma)=-\Rpt(J,\dot\gamma,\dot\gamma)$. Hence $T(K)\circ\mathsf B$ has $\Theta$-components $(\nabla_tJ,\dot\gamma,-\Rpt(J,\dot\gamma,\dot\gamma))$.

Comparing the three components proves the lemma.
\end{proof}

\begin{thm}\label{thm:jacobi}
Assume \textup{(SH1)}, and let $\Phi$ be a geodesic flow of $(K,H)$. Let $v_0\in T_xM$ and $\gamma:=\Phi(\cdot,v_0)$. For all $a,b\in T_xM$ there is a Jacobi field $J$ along $\gamma$ with $J(0)=a$ and $\nabla_tJ(0)=b$. If moreover \textup{(SH3)} holds, then $J$ is unique, and every geodesic $\gamma$ of $K$ with $\gamma(0)=x$ is of the form $\Phi(\cdot,v_0)$ with $v_0=\dot\gamma(0)\in T_xM$; in particular \textup{(SH2)} holds.
\end{thm}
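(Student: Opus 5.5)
For existence I would realize the Jacobi field as a variation through geodesics, built from the geodesic flow, and then invoke Proposition~\ref{prop:jacobi-var}. Put $\xi:=\Theta^{-1}(v_0,a,b)\in T^2M$; this lies in $T_{v_0}(TM)$ with $T(\pi_M)\xi=a$ and $K\xi=b$. By (E1) applied to $TM$ (or directly Lemma~\ref{lem:joint} for $TM$, which is elastic), choose a smooth curve $c\colon(-\varepsilon,\varepsilon)\to TM$ with $c(0)=v_0$ and $\dot c(0)=\xi$. Then
\[
F(s,t):=\Phi\bigl(t,c(s)\bigr)
\]
is smooth, and by \eqref{eq:flow} each $F(s,\cdot)$ is a geodesic. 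Proposition~\ref{prop:jacobi-var} shows that $J:=\partial_sF(0,\cdot)$ is a Jacobi field along $\gamma=F(0,\cdot)$. For the initial data, note that $F(s,0)=\pi_M(c(s))$, so $J(0)=T(\pi_M)\dot c(0)=a$. Next, $\partial_tF(s,0)=\Phi^{(1)}(0,c(s))=c(s)$ by \eqref{eq:flow}. By Lemma~\ref{lem:symmetry} and the restriction principle along $k_0(s)=(s,0)$, this gives $\nabla_tJ(0)=\nabla_s\partial_tF(0,0)=K\bigl(T(c)\partial(0)\bigr)=K\xi=b$.

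For uniqueness under (SH3) I would use Lemma~\ref{lem:linearize}, with $\gamma$ now a geodesic. If $J$ is a Jacobi field, then $\tilde J$ is a solution curve of the vector field $S^\uparrow$ on $T^2M$ with initial value $\Theta^{-1}(\dot\gamma(0),a,b)$. Take two Jacobi fields with the same data. Their lifts $\tilde J_1,\tilde J_2$ are integral curves of $S^\uparrow$ with equal initial value. Since $\RR$ is a curve object for $T^2M$ (part of $M\in\Elastc$), integral curves are unique, so $\tilde J_1=\tilde J_2$. Applying $T(\pi_M)$ then gives $J_1=J_2$.

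For the last assertion, the discussion after \eqref{eq:flow} applies: under (SH3) with a geodesic flow (so $S$ is complete), \cite[Theorem~6.22, Proposition~6.23]{Shio} give $\gamma(t)=\Phi(t,\dot\gamma(0))$ and also that $\Phi$ fixes the zero section, which is (SH2).

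The main obstacle I expect is producing the curve $c$ in $TM$ with prescribed velocity $\xi$. If elasticity of $TM$ together with Lemma~\ref{lem:joint} is not directly applicable, I would instead use Lemma~\ref{lem:transport}. Choose a plot $Q$ through $x$ with a field realizing $a$ and $v_0$, and take $c(s):=v_0$ transported along the horizontal lift plus $s\,b$ added in the vertical direction via $\lambda_M$. Checking $K\dot c(0)=b$ would then follow from $K\circ H=0$, \eqref{eq:H}, and $K\circ\lambda_M=\id$.
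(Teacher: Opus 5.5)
Your proposal is correct and follows the paper's proof essentially step for step. It uses the same variation $F(s,t)=\Phi(t,c(s))$ through flow lines with $T(c)(\partial(0))=\Theta^{-1}(v_0,a,b)$, the same use of Proposition~\ref{prop:jacobi-var}, Lemma~\ref{lem:symmetry} and the restriction principle for the initial data, uniqueness via Lemma~\ref{lem:linearize} and preinitiality of $(\RR,\partial,0)$ for $T^2M$, and \cite[Proposition~6.23]{Shio} for the last assertion. The only difference is minor: the paper obtains the curve $c$ from the fact that $\delta_{TM}$ is a subduction \cite[Proposition~2.14]{Shio}, whereas your appeal to Lemma~\ref{lem:joint} for the elastic space $TM$ (with $n=1$, $m=0$) works equally well, so the fallback in your last paragraph is not needed.
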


\begin{proof}
\emph{Existence.} Let $\xi:=\Theta^{-1}(v_0,a,b)\in T^2M$. Since $\delta_{TM}$ is a subduction \cite[Proposition~2.14]{Shio}, there is a smooth curve $c\colon\RR\to TM$ with $\delta(c)=T(c)(\partial(0))=\xi$; then $c(0)=\pi_{TM}(\xi)=v_0$. Put $F(s,t):=\Phi(t,c(s))$. Each $F(s,\cdot)$ is a geodesic by \eqref{eq:flow}, and $F(0,\cdot)=\gamma$, so $J:=\partial_sF(0,\cdot)$ is a Jacobi field by Proposition~\ref{prop:jacobi-var}. Let $k(s)=(s,0)$. Then $F\circ k=\pi_M\circ c$ by \eqref{eq:flow}, so $J(0)=T(\pi_M)(\delta(c))=T(\pi_M)(\xi)=a$. Moreover $\partial_tF\circ k=\Phi^{(1)}(0,c(\cdot))=c$ by \eqref{eq:flow}, so Lemma~\ref{lem:symmetry} and the restriction principle give
\[
\nabla_tJ(0)=(\nabla_s\partial_tF)(0,0)=\nabla_\partial c\,(0)=K(\delta(c))=K(\xi)=b .
\]

\emph{Uniqueness.} Let $J,J'$ be Jacobi fields with the same initial data. By Lemma~\ref{lem:linearize}, $\tilde J$ and $\tilde J'$ are solutions of the dynamical system $(T^2M,S^\uparrow,\Theta^{-1}(v_0,a,b))$ with time domain $(\RR,\partial,0)$ in the sense of \cite{CCL} and \cite[\S6.1]{Shio}. By (SH3), $(\RR,\partial,0)$ is preinitial for $T^2M$ \cite[Definition~6.20]{Shio}, so $\tilde J=\tilde J'$, and $J=T(\pi_M)\circ\tilde J=J'$.

\emph{Every geodesic is a flow line.} Since $(K,H)$ has a geodesic flow, $S$ is complete (Section~\ref{sec:prelim}). Under (SH3), \cite[Proposition~6.23]{Shio} therefore shows that $\Phi$ fixes the zero section and that every geodesic $\gamma$ of $K$ satisfies $\gamma=\Phi(\cdot,\dot\gamma(0))$.
\end{proof}

\begin{rem}\label{rem:why-all-n}
Theorem~\ref{thm:jacobi} makes the role of two structural hypotheses of \cite{Shio} explicit. Effectiveness of $K$ is what allows the initial data $(J(0),\nabla_tJ(0))$ to be prescribed arbitrarily. The requirement in \cite[Definition~6.20]{Shio} that $(\RR,\partial,0)$ be preinitial for \emph{every} iterated tangent bundle, motivated in \cite[Remark~6.21]{Shio} by stability under $T$, is what gives uniqueness, because the linearized geodesic equation lives on $T^2M$.
\end{rem}

\subsection{The null space of the index form and conjugate points}

\begin{prop}\label{prop:nullspace}
Let $\gamma\in\Path(M;x,y)$ satisfy the geodesic equation on $[0,1]$. For every smooth field $J$ along $\gamma$ and every $W\in\XX_0(\gamma)$,
\[
\int_0^1\Bigl[g(\nabla_tJ,\nabla_tW)-\Rfour(J,\dot\gamma,\dot\gamma,W)\Bigr]dt=-\int_0^1g\bigl(\nabla_t^2J+\Rpt(J,\dot\gamma,\dot\gamma),\,W\bigr)dt .
\]
Consequently, $J\in\XX_0(\gamma)$ satisfies $I(J,W)=0$ for all $W\in\XX_0(\gamma)$ if and only if $J$ satisfies the Jacobi equation on $[0,1]$.
\end{prop}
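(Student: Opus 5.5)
The argument is integration by parts along $\gamma$, using metric compatibility \eqref{eq:metric-along} with $A=\partial_t$ and $f=\gamma$. For fields $J,W$ along $\gamma$ we have
\[
\partial_t\,g(\nabla_tJ,W)=g(\nabla_t^2J,W)+g(\nabla_tJ,\nabla_tW).
\]
Integrate over $[0,1]$. The boundary term $\bigl[g(\nabla_tJ,W)\bigr]_0^1$ vanishes because $W(0)=0_M(x)$, $W(1)=0_M(y)$ and $g$ is bilinear on fibres. This gives
\[
\int_0^1g(\nabla_tJ,\nabla_tW)\,dt=-\int_0^1g(\nabla_t^2J,W)\,dt.
\]
The curvature term is already in the required form, since $\Rfour(J,\dot\gamma,\dot\gamma,W)=g(\Rpt(J,\dot\gamma,\dot\gamma),W)$ by Notation~\ref{nota:R}. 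Adding the two pieces and using bilinearity of $g$ yields the displayed identity. No symmetry of $\Rfour$ is needed here, and the geodesic equation on $[0,1]$ enters only in that $I$ is defined for such $\gamma$.

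For the consequence, the direction \emph{Jacobi $\Rightarrow$ null} is immediate: if $J$ satisfies the Jacobi equation on $[0,1]$, the right-hand side vanishes for every $W\in\XX_0(\gamma)$. For the converse, suppose $I(J,W)=0$ for all $W\in\XX_0(\gamma)$, and put $E_J:=\nabla_t^2J+\Rpt(J,\dot\gamma,\dot\gamma)$, a smooth field along $\gamma$. Take $W:=\varphi E_J=\kappa_M\circ\langle\varphi,E_J\rangle$, where $\varphi\in C^\infty(\RR,\RR)$ is a cutoff with $\varphi>0$ on $(0,1)$ and $\varphi(0)=\varphi(1)=0$, for instance $\varphi(t)=t(1-t)$. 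Then $W$ is smooth, lies over $\gamma$, and vanishes at $t=0,1$ because scalar multiplication by $0$ lands in the zero section. So $W\in\XX_0(\gamma)$, and
\[
0=-\int_0^1\varphi(t)\,g(E_J,E_J)(t)\,dt.
\]
The integrand is continuous and non-negative, so $\varphi\,g(E_J,E_J)=0$ on $[0,1]$. Definiteness of $g$ gives $E_J(t)=0$ for $t\in(0,1)$. Continuity of $E_J$ into $TM$, together with continuity of $g(E_J,E_J)$, extends this to the endpoints: $g(E_J,E_J)$ is continuous and vanishes on $(0,1)$, hence also at $0$ and $1$, and definiteness again gives $E_J=0_M\circ\gamma$ on $[0,1]$.

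The main point needing care is this converse step. One must produce a test field without bump functions on $M$, which we cannot use. The fix is that the cutoff lives on the parameter line $\RR$, so only Lemma~\ref{lem:along}(3)-type scalar multiplication of fields along $\gamma$ is needed. One must also check that $t\mapsto g(E_J,E_J)(t)$ is continuous, which holds since it is smooth as a composite of smooth maps. The pointwise definiteness of $g$ is then what converts the vanishing integral into the Jacobi equation, and this is where the standing convention that metrics are definite is used.
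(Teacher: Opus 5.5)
Your proof is correct and follows the paper's argument: integration by parts via \eqref{eq:metric-along} with the boundary term killed by $W(0)=0_M(x)$, $W(1)=0_M(y)$, then testing against $W=\varphi E_J$ and using continuity of $g(E_J,E_J)$ and definiteness of $g$ to conclude on all of $[0,1]$. The only difference is cosmetic: you use the global weight $\varphi(t)=t(1-t)$ directly, whereas the paper argues by contradiction with a bump function supported near a point $t_0\in(0,1)$ where $E_J(t_0)\neq0$.
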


\begin{proof}
By \eqref{eq:metric-along}, $\frac{d}{dt}g(\nabla_tJ,W)=g(\nabla_t^2J,W)+g(\nabla_tJ,\nabla_tW)$, and the boundary term vanishes since $W(0)$ and $W(1)$ are zero vectors. This proves the identity and the ``if'' part. Conversely, let $Z:=\nabla_t^2J+\Rpt(J,\dot\gamma,\dot\gamma)$ and suppose $Z(t_0)\neq0$ for some $t_0\in(0,1)$. Then $h:=g(Z,Z)>0$ on an interval $(a,b)\ni t_0$ contained in $(0,1)$. For $\varphi\in C^\infty(\RR,\RR)$ with $\varphi\ge0$, $\supp\varphi\subset(a,b)$ and $\varphi(t_0)>0$, the field $W:=\varphi Z$ lies in $\XX_0(\gamma)$ and $0=I(J,W)=-\int_0^1\varphi h\,dt<0$, a contradiction. Hence $Z=0$ on $(0,1)$, and on $[0,1]$ by continuity of $h$ and definiteness of $g$.
\end{proof}

\begin{dfn}\label{dfn:conjugate}
Let $\gamma\in\Path(M;x,y)$ satisfy the geodesic equation on $[0,1]$, and let $0<t_0\le1$. The point $\gamma(t_0)$ is \emph{conjugate} to $\gamma(0)$ along $\gamma$ if there is a field $J$ along $\gamma$ satisfying the Jacobi equation on $[0,t_0]$ with $J(0)=0$, $J(t_0)=0$ and $J|_{[0,t_0]}\not\equiv0$.
\end{dfn}

\begin{cor}\label{cor:conj}
Let $\gamma\in\Path(M;x,y)$ satisfy the geodesic equation on $[0,1]$. The form induced by $I$ on $\overline{\XX}_0(\gamma)$ is weakly non-degenerate if and only if $\gamma(1)$ is not conjugate to $\gamma(0)$ along $\gamma$.
\end{cor}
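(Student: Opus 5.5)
The argument will be read off from Proposition~\ref{prop:nullspace}. By definition, weak non-degeneracy of the induced form on $\overline{\XX}_0(\gamma)$ means: if $J\in\XX_0(\gamma)$ satisfies $I(J,W)=0$ for all $W\in\XX_0(\gamma)$, then $J\in\XX_{00}(\gamma)$, i.e.\ $J$ vanishes on $[0,1]$. This is well posed because $I$ vanishes on $\XX_{00}(\gamma)$ in either argument. Here vanishing in the first argument is the remark after Definition~\ref{dfn:index}, and vanishing in the second follows from the symmetry in Corollary~\ref{cor:I-sym}. By Proposition~\ref{prop:nullspace}, the null space of $I$ on $\XX_0(\gamma)$ consists exactly of those $J\in\XX_0(\gamma)$ that satisfy the Jacobi equation on $[0,1]$. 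So the corollary reduces to the following claim: there is a $J\in\XX_0(\gamma)$ satisfying the Jacobi equation on $[0,1]$ with $J|_{[0,1]}\not\equiv0$ if and only if $\gamma(1)$ is conjugate to $\gamma(0)$.

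\emph{($\Leftarrow$ of non-degeneracy, i.e.\ degenerate $\Rightarrow$ conjugate.)} Suppose $J\in\XX_0(\gamma)$ satisfies the Jacobi equation on $[0,1]$ and $J\notin\XX_{00}(\gamma)$. Then $J(0)=0_M(x)$, $J(1)=0_M(y)$ and $J|_{[0,1]}\not\equiv0$. This is exactly Definition~\ref{dfn:conjugate} with $t_0=1$.

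\emph{(Conjugate $\Rightarrow$ degenerate.)} Suppose $J$ is a field along $\gamma$ that satisfies the Jacobi equation on $[0,1]$, vanishes at $t=0$ and $t=1$, and does not vanish identically on $[0,1]$. Then $J\in\XX_0(\gamma)$ by the definition of $\XX_0(\gamma)$: it is a smooth field along $\gamma$ with zero values at the endpoints, and no condition outside $[0,1]$ is required. Proposition~\ref{prop:nullspace} then gives $I(J,W)=0$ for all $W$, while $[J]\neq0$ in $\overline{\XX}_0(\gamma)$. Hence the induced form is degenerate.

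The only point needing care is the bookkeeping of the quotient. We must check that the conditions ``vanishes on $[0,1]$'' and ``zero class in $\overline{\XX}_0(\gamma)$'' coincide, which holds by the definition of $\XX_{00}(\gamma)$. We must also check that the Jacobi condition only on $[0,1]$ matches Definition~\ref{dfn:conjugate} with $t_0=1$. No existence or uniqueness theory for Jacobi fields, and hence none of (SH1)--(SH3), is used.
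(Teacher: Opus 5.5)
Your proof is correct and follows the paper's argument: Proposition~\ref{prop:nullspace} identifies the null space of $I$ with the fields in $\XX_0(\gamma)$ satisfying the Jacobi equation on $[0,1]$, and such a field is non-zero in $\overline{\XX}_0(\gamma)$ exactly when it does not vanish identically on $[0,1]$, which is Definition~\ref{dfn:conjugate} with $t_0=1$. Your appeal to Corollary~\ref{cor:I-sym} for vanishing in the second argument is harmless but unnecessary, since the restriction-principle remark after Definition~\ref{dfn:index} already covers both arguments.
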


\begin{proof}
By Proposition~\ref{prop:nullspace}, the null space of $I$ consists of the fields in $\XX_0(\gamma)$ that satisfy the Jacobi equation on $[0,1]$; such a field is non-zero in $\overline{\XX}_0(\gamma)$ exactly when it does not vanish identically on $[0,1]$.
\end{proof}

\begin{rem}
Every Jacobi field in the sense of Theorem~\ref{thm:jacobi} satisfies the Jacobi equation on $[0,1]$. The converse, extending a field satisfying the Jacobi equation on $[0,1]$ to a Jacobi field along $\gamma$, would require a local uniqueness statement that the curve-object property of $\Elastc$ does not provide.
\end{rem}

\subsection{Non-positive sectional curvature}

\begin{thm}\label{thm:nonpos}
Suppose $\sect_x(\sigma)\le0$ for every $x\in M$ and every plane $\sigma\subseteq T_xM$, and let $\gamma\in\Path(M;x,y)$ satisfy the geodesic equation on $[0,1]$.
\begin{enumerate}[label=\textup{(\arabic*)}]
\item For every $W\in\XX_0(\gamma)$, $I(W,W)\ge\int_0^1\|\nabla_tW\|^2dt\ge0$, with $I(W,W)=0$ if and only if $W\in\XX_{00}(\gamma)$. In particular the form induced by $I$ on $\overline{\XX}_0(\gamma)$ is positive definite.
\item No point $\gamma(t_0)$ with $0<t_0\le1$ is conjugate to $\gamma(0)$ along $\gamma$.
\end{enumerate}
\end{thm}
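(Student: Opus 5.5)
For (1), I would start from the pointwise expression of the integrand given in Corollary~\ref{cor:2ndvar}. There is one difference: that corollary is stated for variation fields, whereas here $W$ is an arbitrary element of $\XX_0(\gamma)$. So I would not route through variation plots. I would work directly with the integrand of Definition~\ref{dfn:index}.

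At each $t\in[0,1]$ there are two cases. If $W(t)$ and $\dot\gamma(t)$ are linearly dependent, then $\Rpt(W,\dot\gamma,\dot\gamma)(t)=0$ by antisymmetry (Definition~\ref{dfn:S}(a)). Otherwise they span a plane $\sigma$, and Definition~\ref{dfn:curvform} gives
\[
\Rfour(W,\dot\gamma,\dot\gamma,W)(t)=\sect(\sigma)\,\|W\wedge\dot\gamma\|_{\Lambda^2}^2\le0 .
\]
In both cases $-\Rfour(W,\dot\gamma,\dot\gamma,W)\ge0$ pointwise, and integrating yields $I(W,W)\ge\int_0^1\|\nabla_tW\|^2\,dt\ge0$.

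For the equality case, suppose $I(W,W)=0$. Then $\int_0^1 g(\nabla_tW,\nabla_tW)\,dt=0$. The integrand is continuous and nonnegative, so it vanishes identically, and definiteness of $g$ gives $\nabla_tW=0$ on $[0,1]$. Next, \eqref{eq:metric-along} gives $\frac{d}{dt}g(W,W)=2g(\nabla_tW,W)=0$ on $[0,1]$. Since $W(0)=0_M(x)$, we get $g(W,W)\equiv0$ on $[0,1]$, so $W$ vanishes on $[0,1]$ and $W\in\XX_{00}(\gamma)$. The converse direction was noted after Definition~\ref{dfn:index}. Positive definiteness on $\overline{\XX}_0(\gamma)$ follows immediately.

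For (2), take $0<t_0\le1$ and suppose $J$ satisfies the Jacobi equation on $[0,t_0]$ with $J(0)=0$ and $J(t_0)=0$. The step I expect to be the main obstacle is avoiding any reduction to (1) through reparametrization or restriction of $\gamma$ to $[0,t_0]$: both $\XX_0$ and the index form are tied to $[0,1]$, and $J$ need not vanish at $1$. I would instead argue directly on $[0,t_0]$. By \eqref{eq:metric-along} applied twice,
\[
\frac{d^2}{dt^2}\,\tfrac12 g(J,J)=\|\nabla_tJ\|^2+g(\nabla_t^2J,J)=\|\nabla_tJ\|^2-\Rfour(J,\dot\gamma,\dot\gamma,J)\ge0
\]
on $[0,t_0]$, by the pointwise inequality from (1). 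Thus $h:=\frac12 g(J,J)$ is convex on $[0,t_0]$, nonnegative, and vanishes at both endpoints, so $h\equiv0$ there. Definiteness of $g$ then gives $J|_{[0,t_0]}\equiv0$, so $\gamma(t_0)$ is not conjugate to $\gamma(0)$.

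One can additionally note that the second derivative vanishing forces $\nabla_tJ=0$, but this is not needed. Continuity of $h$ and of its derivatives on the closed interval is clear, since all quantities are smooth functions of $t$.
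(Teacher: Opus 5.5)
Your proposal is correct and follows the paper's proof essentially step for step. In (1) you obtain the pointwise bound $-\Rfour(W,\dot\gamma,\dot\gamma,W)\ge0$ from $\sect\le0$ and antisymmetry, and you settle the equality case via $\nabla_tW=0$, $\frac{d}{dt}g(W,W)=0$ and $W(0)=0$; in (2) you use convexity of $\frac12g(J,J)$ on $[0,t_0]$. Arguing from the integrand of Definition~\ref{dfn:index} instead of citing Corollary~\ref{cor:2ndvar} is a harmless tightening: the second equality there is a pointwise identity valid for every $W\in\XX_0(\gamma)$, whereas without (SH1)--(SH2) an arbitrary such $W$ need not a priori be the variation field of a variation plot.
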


\begin{proof}
(1) The inequality follows from Corollary~\ref{cor:2ndvar}. If $I(W,W)=0$, then $\nabla_tW=0$ on $[0,1]$, so $\frac{d}{dt}g(W,W)=2g(\nabla_tW,W)=0$ there; since $W(0)=0$, definiteness gives $W=0$ on $[0,1]$.

(2) Let $J$ be a field along $\gamma$ satisfying the Jacobi equation on $[0,t_0]$ with $J(0)=J(t_0)=0$, and $f:=\frac12g(J,J)$. By \eqref{eq:metric-along} and the Jacobi equation, on $[0,t_0]$,
\[
f''=\|\nabla_tJ\|^2-\Rfour(J,\dot\gamma,\dot\gamma,J)\ge0 .
\]
Indeed, when $J(t)$ and $\dot\gamma(t)$ are linearly independent, the curvature term is $\sect(J,\dot\gamma)\|J\wedge\dot\gamma\|_{\Lambda^2}^2\le0$, while it is $0$ when they are dependent by antisymmetry of $\Rpt$. Thus $f\ge0$ is convex on $[0,t_0]$ and vanishes at both ends, so $f=0$ and $J=0$ on $[0,t_0]$.
\end{proof}

\subsection{Paths that do not minimize length}

\begin{thm}\label{thm:nonmin}
Assume \textup{(SH1)} and \textup{(SH2)}. Let $\gamma\in\Path(M;x,y)$ satisfy the geodesic equation on $[0,1]$. If $I(W,W)<0$ for some $W\in\XX_0(\gamma)$, then there is $\sigma\in\Path(M;x,y)$ with $L(\sigma)<L(\gamma)$.
\end{thm}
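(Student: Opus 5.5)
The plan is to build a one-parameter variation of $\gamma$ whose variation field is $W$, compare energies using the second variation formula, and then convert the energy inequality into a length inequality via \eqref{eq:LE}.

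\emph{Step 1: a variation plot with variation field $W$.} Under (SH1) and (SH2), apply Lemma~\ref{lem:transport} with $Q=\gamma$ and $Y=W$. Define
\[
\ad(P)(s,t):=\Phi\bigl(s,W(t)\bigr).
\]
This map is smooth on $\RR\times\RR$. It is a plot of $\Path(M;x,y)$ because $\Phi(s,0_M(x))=x$ and $\Phi(s,0_M(y))=y$ by (SH2). By Lemma~\ref{lem:transport}(i), $P(0)=\gamma$, and by Lemma~\ref{lem:transport}(ii), the variation field $V_1(0,\cdot)$ equals $W$. So $P$ is a variation plot of $\gamma$ in the sense of Corollary~\ref{cor:2ndvar}. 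This is the same construction as in Remark~\ref{rem:variation-pair}, restricted to one parameter.

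\emph{Step 2: energy decreases.} The path $\gamma$ satisfies the geodesic equation on $[0,1]$, so it is critical for $E$ by \cite[Proposition~6.30]{Shio}. Hence $(E\circ P)'(0)=0$. Corollary~\ref{cor:2ndvar} gives $(E\circ P)''(0)=I(W,W)<0$. The function $E\circ P$ is smooth by Lemma~\ref{lem:E-smooth}, so Taylor's theorem yields a small $s\neq0$ with $E(P(s))<E(\gamma)$. Set $\sigma:=P(s)$.

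\emph{Step 3: from energy to length.} Since $\nabla_t\dot\gamma=0$ on $[0,1]$, \eqref{eq:metric-along} gives
\[
\tfrac{d}{dt}\,g(\dot\gamma,\dot\gamma)=2g(\nabla_t\dot\gamma,\dot\gamma)=0
\]
there. So $g(\dot\gamma,\dot\gamma)$ is constant on $[0,1]$, and the equality case of \eqref{eq:LE} gives $L(\gamma)^2=2E(\gamma)$. For $\sigma$ we only have the inequality in \eqref{eq:LE}. Combining,
\[
L(\sigma)^2\le2E(\sigma)<2E(\gamma)=L(\gamma)^2,
\]
hence $L(\sigma)<L(\gamma)$.

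No (SH3) is used anywhere, since we never need uniqueness of Jacobi fields or that geodesics are flow lines. The main obstacle is Step 1: checking that $s\mapsto\Phi(s,W(\cdot))$ really defines a plot into $\Path(M;x,y)$ whose variation field is $W$. This is settled by Lemma~\ref{lem:transport} together with the fact that $\Phi$ fixes the zero section. Steps 2 and 3 are formal.
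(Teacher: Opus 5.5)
Your proof is correct and follows the paper's argument essentially step for step. You use the same variation plot $\ad(P)(s,t)=\Phi(s,W(t))$, combine criticality with Corollary~\ref{cor:2ndvar} to get $E(P(s_0))<E(\gamma)$, and pass to length via \eqref{eq:LE} and the constant speed of $\gamma$; the only cosmetic difference is that the paper cites \cite[Lemma~6.32]{Shio} for this plot, whereas you check its properties directly from Lemma~\ref{lem:transport} (with the roles of $s$ and $t$ swapped) and the zero-section condition in (SH2).
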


\begin{proof}
The hypotheses are the standing assumptions of \cite[\S6.6]{Shio}. Let $P_W$ be the variation plot of \cite[Lemma~6.32]{Shio}, $\ad(P_W)(s,t)=\Phi(s,W(t))$, whose variation field at $s=0$ is $W$. By \cite[Proposition~6.30]{Shio}, $(E\circ P_W)'(0)=0$, and by Corollary~\ref{cor:2ndvar}, $(E\circ P_W)''(0)=I(W,W)<0$. Hence $E(P_W(s_0))<E(\gamma)$ for some $s_0$. The speed of $\gamma$ is constant on $[0,1]$, since $\frac{d}{dt}g(\dot\gamma,\dot\gamma)=2g(\nabla_t\dot\gamma,\dot\gamma)=0$ there, so $E(\gamma)=\frac12L(\gamma)^2$ by \eqref{eq:LE}. For $\sigma:=P_W(s_0)$, again by \eqref{eq:LE},
$L(\sigma)^2\le2E(\sigma)<2E(\gamma)=L(\gamma)^2$.
\end{proof}

\begin{rem}
Theorem~\ref{thm:nonmin} complements \cite[Proposition~6.37]{Shio}: under the same hypotheses, a regular length-minimizing path has a constant-speed reparametrization that satisfies the geodesic equation on $[0,1]$. Together with Theorem~\ref{thm:nonpos}, this gives the expected variational implication that a negative index direction obstructs minimality, whereas non-positive sectional curvature rules out such directions.
\end{rem}

\begin{rem}[mapping spaces]\label{rem:mapping-var}
Let $\Sigma$ be a closed manifold with a volume form, let $(N,g_N)$ be a geodesically complete finite-dimensional Riemannian manifold with the full connection $(K_N,H_N)$ given by its Levi-Civita connection, and let $\Xmap=C^\infty(\Sigma,N)$ carry the metric $g_{\mathit{map}}$ and the full connection $(K_\Xmap,H_\Xmap)$ of \cite[\S\S7.3--7.4]{Shio}; the tangent-commuting hypothesis holds because $N$ is a manifold \cite[Convention~7.6]{Shio}. Then $\Xmap$ satisfies (SH1) and (SH2): $K_\Xmap$ is a Levi-Civita connection for $g_{\mathit{map}}$ \cite[Proposition~7.13]{Shio} and is curvature-tensorial by Proposition~\ref{prop:tensorial}, and the geodesic flow $\Phi_\Xmap$ of \cite[Proposition~7.15]{Shio} fixes the zero section by \cite[Theorem~7.17]{Shio}, since $N\in\Elastc$ \cite[Remark~6.21]{Shio}. Consequently Theorem~\ref{thm:nonmin}, Remark~\ref{rem:variation-pair} and the existence part of Theorem~\ref{thm:jacobi} apply to $\Xmap$, for instance to the free loop space $C^\infty(S^1,N)$. Whether $\Xmap$ satisfies (SH3), that is, whether $\Xmap\in\Elastc$, is open \cite[Remark~7.16]{Shio}, so the uniqueness part of Theorem~\ref{thm:jacobi} is not obtained in this way.
\end{rem}


\end{document}